\documentclass[10pt]{amsart}

\usepackage[T1]{fontenc}
\usepackage[utf8]{inputenc}
\usepackage[english]{babel}
\usepackage[babel]{microtype}

\usepackage{amsmath, amssymb, amsthm, amsfonts}

\usepackage{mathtools, mathrsfs, dsfont, stmaryrd, old-arrows}

\usepackage{graphicx, xcolor}

\usepackage{tikz-cd}
\usetikzlibrary{babel}
\usepackage{quiver}

\usepackage[pdfencoding = auto, psdextra, colorlinks = true, linkcolor = ., citecolor = ., urlcolor = blue]{hyperref}
\usepackage[bottom]{footmisc}
\usepackage[backend=biber,style=alphabetic,maxbibnames=99]{biblatex}

\usepackage{lipsum}
\usepackage[l2tabu, orthodox]{nag}
\usepackage{relsize}
\usepackage{csquotes}
\usepackage[shortlabels]{enumitem}
\usepackage{chngcntr}
\usepackage{multicol}

\numberwithin{equation}{section}

\theoremstyle{plain}

\newtheorem{theorem}{Theorem}[section]
\newtheorem{proposition}[theorem]{Proposition}
\newtheorem{lemma}[theorem]{Lemma}

\newtheorem{corollary}[theorem]{Corollary}

\newtheorem*{theorem*}{Theorem}
\newtheorem*{proposition*}{Proposition}
\newtheorem*{lemma*}{Lemma}
\newtheorem*{claim*}{Claim}
\newtheorem*{corollary*}{Corollary}
\newtheorem*{observation*}{Observation}

\theoremstyle{definition}

\newtheorem{definition}[theorem]{Definition}
\newtheorem{remark}[theorem]{Remark}
\newtheorem{notation}[theorem]{Notation}
\newtheorem{example}[theorem]{Example}

\newtheorem{problem}{Problem}

\newtheorem*{definition*}{Definition}
\newtheorem*{remark*}{Remark}
\newtheorem*{example*}{Example}
\newtheorem*{question*}{Question}
\newtheorem*{problem*}{Problem}
\newtheorem*{conjecture*}{Conjecture}

\let\Im\relax\DeclareMathOperator{\Im}{Im}

\DeclareMathOperator{\Hom}{Hom}
\DeclareMathOperator{\End}{End}

\newcommand*{\Id}{\mrm{Id}}

\newcommand*{\ct}[1]{\mrm{#1}}

\newcommand*{\hyp}[2]{#1\text{-}#2}

\newcommand*{\cthyp}[2]{\hyp{\ct{#1}}{#2}}

\newcommand*{\bb}{\mathbb}

\newcommand*{\mrm}{\mathrm}

\newcommand*{\tox}{\xrightarrow}

\newcommand*{\Gr}{\ct{Grp}}

\newcommand*{\ModR}[1][R]{\cthyp{Mod}{S}}

\newcommand*{\RGr}[1][R]{#1 \hspace*{.05em} \Gr}

\DeclareMathOperator{\Poly}{Poly}

\usepackage[mathcal]{euscript}

\let\oldtocsection=\tocsection
\let\oldtocsubsection=\tocsubsection
\let\oldtocsubsubsection=\tocsubsubsection

\renewcommand{\tocsection}[2]{\hspace{0em}\oldtocsection{#1}{#2}}
\renewcommand{\tocsubsection}[2]{\hspace{1em}\oldtocsubsection{#1}{#2}}
\renewcommand{\tocsubsubsection}[2]{\hspace{2em}\oldtocsubsubsection{#1}{#2}}

\begin{document}

\title{Ring-induced localizations of nilpotent groups}

\author[S.O.Ivanov]{Sergei O. Ivanov} 
\email{ivanov.s.o.1986@gmail.com, ivanov.s.o.1986@bimsa.cn}
\address{Beijing Key Laboratory of Topological Statistics and Applications for Complex Systems, Beijing Institute of Mathematical Sciences and Applications (BIMSA)}

\author[G. Kadantsev]{Georgii Kadantsev}
\email{kadantsev.georg@yandex.ru}
\address{Qiuzhen College, Tsinghua University}
\author[A. Krasilnikov]{Aleksandr Krasilnikov}
\thanks{The third author was supported by the Basic Research Program of the National Research University Higher School of Economics.}
\email{kras1lnikoff.av@gmail.com, avkrasilnikov@hse.ru}
\address{National Research University Higher School of Economics}

\dedicatory{To Ilya Chistyakov, our teacher, on the occasion of his 65th birthday}

\begin{abstract}
For a commutative ring $R$, we study the $R$-localization functor on the category of groups, defined as localization with respect to the homomorphism $\mathbb{Z}\to R$. Our main result is that, when $R$ is a binomial ring, the $R$-localization of a nilpotent group is again nilpotent. Taking $R=\mathbb{Z}_p$, the ring of $p$-adic integers, yields a new example of a localization functor that preserves nilpotency. To prove this, we characterize $R$-local groups in terms of $R$-groups in the sense of Myasnikov--Remeslennikov. We call an $R$-group a \emph{Hall--Petresco $R$-group} if it satisfies a version of the Hall--Petresco identity, and show that these form a quasivariety closed under quotients by the center. The crucial input to our main result is that every $R$-local group carries a unique Hall--Petresco $R$-group structure.
\end{abstract}

\maketitle

\tableofcontents

\section*{Introduction}

A localization on the category of groups is a functor $L:\Gr\to\Gr$ equipped with
a natural transformation $\eta:\Id\to L$ such that the two natural transformations
$\eta L, L\eta : L\to L^2$ coincide and are isomorphisms. This paper is part of a
broader effort to understand such localizations and the properties they preserve.
Several works have addressed this subject; we refer the reader
to~\cite{casacuberta2000structures, libman2000cardinality,
bastardas2004localitzacions, chatterji2008guido, ivanov2023cokernel} for a
detailed overview. We focus on a long-standing question posed by Emmanuel Farjoun
in the 1990s: whether every localization of a nilpotent group is again nilpotent.

This problem was recently advanced in~\cite{right-exact}. We call a functor
\emph{weakly right exact} if it preserves reflexive coequalizers (the authors
of~\cite{right-exact} call such functors simply right exact, but we reserve the
term \emph{right exact} for functors that preserve all finite colimits). They
showed that every weakly right exact localization maps nilpotent groups to
nilpotent groups, and that many classical localizations on the category of groups
are weakly right exact; they also exhibited localizations that are not.

For a commutative ring $R$, we say that a group $G$ is \emph{$R$-local} if, for
every $g\in G$, there is a unique homomorphism from the additive group of $R$ to
$G$ sending $1$ to $g$. Every group $G$ admits a universal map $G\to L_R(G)$ to an
$R$-local group, and this defines a localization
\begin{equation}
L_R:\Gr \to \Gr,
\end{equation}
which we call the \emph{$R$-localization}. The simplest known localization on the
category of groups that is not weakly right exact appears to be the
$\mathbb{Z}_p$-localization, where $\mathbb{Z}_p$ is the ring of $p$-adic
integers. This raises a natural question: is the $\mathbb{Z}_p$-localization of a
nilpotent group nilpotent? Our main result answers this affirmatively, in greater
generality.

\begin{theorem*}[{Theorem~\ref{thm:nilpotency-preservation}}]
For a binomial ring $R$, the $R$-localization of a nilpotent group is nilpotent.
\end{theorem*}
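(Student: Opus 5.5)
The plan follows the route announced in the abstract: turn the question about $L_R(G)$ into a question about $R$-groups, and then use the structure theory of Hall--Petresco $R$-groups to bound nilpotency class. Let $G$ be nilpotent of class $c$ and write $\eta\colon G\to L_R(G)$ for the localization map. I take as given three results stated earlier in the paper: the characterization of $R$-local groups as $R$-groups; the theorem that every $R$-local group carries a unique Hall--Petresco $R$-group structure; and the fact that Hall--Petresco $R$-groups form a quasivariety closed under quotients by the center. From these, $L_R(G)$ is a Hall--Petresco $R$-group, and $R$-local groups are exactly the groups admitting such a structure. Uniqueness of the structure should make every homomorphism of $R$-local groups an $R$-homomorphism, since $g^r$ is the image of $r$ under the unique map $R\to G$ sending $1\mapsto g$. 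Hence $L_R(G)$ is the free Hall--Petresco $R$-group on $G$, i.e. the reflection of $G$ into this quasivariety.

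I would then bound the class by a closure argument. Inside the quasivariety, let $N_c$ be the class of Hall--Petresco $R$-groups that are nilpotent of class $\le c$ as groups. I aim to show that $N_c$ is closed under $R$-subgroups and products and is defined by $R$-group quasi-identities, so that it is a sub-quasivariety and has its own reflection $G\to F_c(G)$. Products and subgroups are immediate. The real content is that the $R$-subgroup generated by $\eta(G)$ is nilpotent of class $\le c$, which I would prove by induction on $c$ via the center. Given a Hall--Petresco $R$-group $H$, consider the set of $x\in H$ that commute with the $R$-subgroup generated by the image of $Z(G)$. The Hall--Petresco identity, together with uniqueness of $R$-powers in local groups, should give that $R$-powers of central elements are central, i.e. $[x,y]=1\Rightarrow [x,y^r]=1$. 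The argument for this is that $x y^r x^{-1}$ and $y^r$ are both $r$-th powers of $y$, and $R$-structure maps are unique. Then $Z(H)$ is an $R$-subgroup, and by closure under quotients by the center, $H/Z(H)$ is again a Hall--Petresco $R$-group.

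The induction goes as follows. Let $H=L_R(G)$, and let $Z$ be the $R$-closure of $\eta(Z(G))$, which is central in $H$. Then $H/Z$ is a Hall--Petresco $R$-group receiving a map from $G/Z(G)$, which has class $c-1$. I want to show that $H/Z$ is generated as an $R$-group by the image of $G$ and is the localization of $G/Z(G)$, or at least a quotient of it. For this I compare universal properties. A map from $G/Z(G)$ to an $R$-local $K$ lifts to $G\to K$, then extends to $H\to K$, and it kills $Z$ because the generators of $Z$ map to $1$ and the map is an $R$-homomorphism. So $H/Z\cong L_R(G/Z(G))$, which by induction has class $\le c-1$. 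It follows that $H$ has class $\le c$, as long as $Z\subseteq Z(H)$. That containment holds because $H$ is $R$-generated by $\eta(G)$, and centralizers of elements are $R$-subgroups by the power uniqueness above.

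The main obstacle I expect is justifying that $H/Z$ is again $R$-local, with the induced $R$-structure and not merely as an abstract Hall--Petresco $R$-group. This is exactly where closure of the quasivariety under quotients by central $R$-subgroups is needed, and I would reduce it to the stated result about quotients by the whole center. Concretely, I would either show that $Z$ is the full center after iterating, or pass through the upper central series of $H$ and check that the argument only needs central quotients. I also need "Hall--Petresco $R$-group $\Rightarrow$ $R$-local": $R$-powers are unique for such groups, and the characterization theorem supplies that.
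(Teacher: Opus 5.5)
There is a genuine gap in your inductive step. You run the induction on $L_R$ and compare $H/Z$ with $L_R(G/Z(G))$ through the universal property of localization. That universal property only produces maps \emph{into} $R$-local groups. So your argument actually shows that $L_R(G/Z(G))$ is a quotient of $H/Z$, which is the wrong direction for bounding the class. To get an isomorphism, or a surjection $L_R(G/Z(G))\to H/Z$, you must know that $H/Z$ is $R$-local, i.e.\ $R/1$-null (Proposition~\ref{prop:r-null}).

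You flag this yourself and propose to close it with ``Hall--Petresco $R$-group $\Rightarrow$ $R$-local''. That implication is false, and so is your earlier identification of $L_R(G)$ with the free Hall--Petresco $R$-group on $G$:
\begin{itemize}
\item Every $R$-module is a Hall--Petresco $R$-group.
\item For $R=\bb{Z}_p$ the module $\bb{Q}_p$ is not $R$-local, because $\bb{Q}$ is a quotient of $R/1=\bb{Z}_p/\bb{Z}$.
\item For $G=\bb{Q}$ one gets $C^{\mathrm{HP}}_R(G)=\bb{Q}_p$ but $L_R(G)=1$.
\end{itemize}
In general $L_R(G)$ is only the $R/1$-nullification of $C^{\mathrm{HP}}_R(G)$ (Corollary~\ref{cor:L_R is a quotient of HP}). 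Since $R/1$-nullity is not inherited by quotients, nothing in your argument makes $H/Z$ local.

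The missing idea is to run the induction where central quotients are harmless: on $C^{\mathrm{HP}}_R$, whose universal property is relative to Hall--Petresco $R$-groups. The steps are:
\begin{itemize}
\item $\eta(Z(G))$ is central in $C^{\mathrm{HP}}_R(G)$ (Lemma~\ref{lemma:center_to_center}).
\item $C^{\mathrm{HP}}_R(G)/Z(C^{\mathrm{HP}}_R(G))$ is Hall--Petresco (Proposition~\ref{prop:quotient of Hall--Petresco by Z}).
\item Hence there is a surjection $C^{\mathrm{HP}}_R(G/Z(G))\to C^{\mathrm{HP}}_R(G)/Z(C^{\mathrm{HP}}_R(G))$, and induction shows that $C^{\mathrm{HP}}_R(G)$ has class at most $c$.
\item Only at the end does one pass to $L_R(G)$, as a quotient of $C^{\mathrm{HP}}_R(G)$. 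This uses ``$R$-local $\Rightarrow$ Hall--Petresco'' (Proposition~\ref{prop:local are Hall--Petresco}) and never the converse.
\end{itemize}
This is the paper's proof. Your argument becomes correct if you replace $L_R$ by $C^{\mathrm{HP}}_R$ throughout. Your $Z$, the $R$-closure of $\eta(Z(G))$, also works there, since the proof of Proposition~\ref{prop:quotient of Hall--Petresco by Z} applies verbatim to any central $R$-subgroup.

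Two minor points. First, $[x,y]=1\Rightarrow[x,y^r]=1$ follows directly from (R4) in any weak $R$-group, without uniqueness of powers. Second, the sub-quasivariety $N_c$ plays no role in the argument.
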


In fact, we prove a more general statement: the same conclusion holds whenever
$R$ admits a normed pre-$\lambda$-ring structure.

If $P$ is a set of primes and $R=\mathbb{Z}[P^{-1}]\subseteq \mathbb{Q}$, then
the $R$-localization coincides with Baumslag's $P$-localization, which is known
to preserve reflexive coequalizers. Thus, our main result yields an alternative
proof that Baumslag's $P$-localization preserves the class of nilpotent groups.
It also gives an affirmative answer to a question posed
in~\cite{warfield2006nilpotent}, namely whether the $P$-localization in the
category of nilpotent groups coincides with the $P$-localization in the category
of all groups.

The notion of an $R$-local group is closely related to that of an $R$-group,
introduced by Lyndon~\cite{lyndon_groups_1960} and later refined by
Myasnikov--Remeslennikov~\cite{myasnikov_groups_1994}. An $R$-group is a group
$G$ equipped with an exponentiation map
\begin{equation}
G\times R \to G, \qquad (g,r)\mapsto g^r,
\end{equation}
satisfying a list of axioms (Definition~\ref{def:R-group}). We show that a group
$G$ is $R$-local if and only if it admits an $R$-group structure and
$\Hom(R^+/\langle 1_R\rangle,G)=0$ (Theorem~\ref{thm:r-local-r-groups}).

A disadvantage of general $R$-groups is that they are not closed under
central quotients: an $R$-group structure on $G$ does not necessarily induce
one on the quotient $G/Z(G)$ (Theorem~\ref{th:example:non-ideal}). For a
binomial ring $R$, following ideas of Hall~\cite{nilpotent-groups}, we
circumvent this difficulty by introducing the notion of a
\emph{Hall--Petresco $R$-group} (Definition~\ref{def:hall-petreco}). Recall
that the Hall--Petresco identity expresses products of powers of elements in
a nilpotent group in terms of words whose exponents are binomial
coefficients (Definition~\ref{def:hall-petresco-words}). We call an
$R$-group $G$ a \emph{Hall--Petresco $R$-group} if an appropriate version of
this identity holds for the $R$-powers of elements in every nilpotent
$R$-subgroup of $G$. In contrast to arbitrary $R$-groups, Hall--Petresco
$R$-groups behave well under central quotients: we prove that if $G$ is a
Hall--Petresco $R$-group, then $G/Z(G)$ inherits a Hall--Petresco
$R$-group structure. The crucial step in the proof of our main result is the
fact that every $R$-local group is a Hall--Petresco $R$-group
(Proposition~\ref{prop:local are Hall--Petresco}).

The $R$-groups form a quasi-variety over a suitable
signature~\cite{amaglobeli2023varieties}. When $R$ is a binomial ring, the
Hall--Petresco $R$-groups form a sub-quasi-variety of it that contains every
$R$-local group and is closed under quotients by the center. For an arbitrary
ring $R$, we know of no such sub-quasi-variety, and it is precisely this
obstruction that confines our main result to the binomial ring setting. This
raises the following question.

\begin{problem}
For a commutative ring $R$, is there a sub-quasi-variety of the quasi-variety of
$R$-groups that contains every $R$-local group and is closed under quotients by
the center?
\end{problem}

The categories of $R$-groups and Hall--Petresco $R$-groups are governed by
adjunctions. The forgetful functor $\RGr\to\Gr$ admits a left adjoint
$C_R:\Gr\to\RGr$, the \emph{$R$-completion functor}; likewise, the forgetful
functor $\RGr^{\mathrm{HP}}\to\Gr$ admits a left adjoint
\begin{equation}
C_R^{\mathrm{HP}}:\Gr\to\RGr^{\mathrm{HP}}
\end{equation}
that we call the \emph{Hall--Petresco $R$-completion functor}. As an intermediate
step toward the main theorem, we prove the following.

\begin{proposition*}[Proposition~\ref{prop:F^{(n-1)}(G) is nilpotent}]
For a binomial ring $R$, the Hall--Petresco $R$-completion of a nilpotent group
is nilpotent.
\end{proposition*}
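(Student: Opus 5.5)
Let $G$ be nilpotent of class $c$. The plan is to build an explicit Hall--Petresco $R$-group $H$ that receives a map from $G$, is nilpotent of class at most $c$, and is generated (as a Hall--Petresco $R$-group) by the image of $G$. The universal map $G\to C_R^{\mathrm{HP}}(G)$ would then factor through $H$, and we want this to force $C_R^{\mathrm{HP}}(G)$ itself to be nilpotent. The label \texttt{F\^{}\{(n-1)\}(G)} suggests the paper runs a filtration argument rather than an explicit construction; I would follow that shape.

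\textbf{Step 1: set up the induction.} Put $F=C_R^{\mathrm{HP}}(G)$ with unit $\eta\colon G\to F$. Since Hall--Petresco $R$-groups form a quasivariety, $F$ is generated as an $R$-group by $\eta(G)$. Iterate quotients by the center: $F^{(0)}=F$ and $F^{(k)}=F^{(k-1)}/Z(F^{(k-1)})$. By the result that central quotients of Hall--Petresco $R$-groups are again Hall--Petresco $R$-groups, each $F^{(k)}$ is a Hall--Petresco $R$-group, and the projection $F\to F^{(k)}$ is a morphism of $R$-groups. The goal is to show $F^{(c)}$ is trivial, because then $F$ is nilpotent of class at most $c$.

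\textbf{Step 2: induct on $c$.} The key claim is that the image of $\gamma_c(G)$ lies in $Z(F)$. If so, $F/Z(F)$ receives $G/\gamma_c(G)$, which is nilpotent of class $c-1$. It is a Hall--Petresco $R$-group generated by the image of that group, so it is a quotient of $C_R^{\mathrm{HP}}(G/\gamma_c G)$ by the universal property. By induction this quotient is nilpotent of class at most $c-1$, hence $F$ has class at most $c$.

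\textbf{Step 3: the centrality claim (main obstacle).} Take $z\in\eta(\gamma_c G)$. We know $z$ commutes with $\eta(G)$, and must show $z$ commutes with every element of $F$. The centralizer $C_F(z)$ is a subgroup containing $\eta(G)$, so it suffices to show $C_F(z)$ is closed under $R$-powers: if $zg=gz$ then $zg^r=g^rz$. In an $R$-group this should follow from the axioms. Conjugation by $z$ is an automorphism of $G$ as an $R$-group, so it should send $g^r$ to $(z^{-1}gz)^r=g^r$.

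The obstacle is that closure under products and $R$-powers of $\eta(G)$ need not exhaust $F$. The reason is that $F$ is defined by a universal property inside a quasivariety, so it is the $R$-subgroup generated by $\eta(G)$, which is exactly the smallest subset closed under products, inverses and $R$-powers. So this closure does exhaust $F$, provided the conjugation-respects-powers axiom $(h^{-1}gh)^r=h^{-1}g^rh$ is among the $R$-group axioms of Definition~\ref{def:R-group} (it is in Myasnikov--Remeslennikov). Under that assumption, $C_F(z)$ is an $R$-subgroup containing $\eta(G)$, hence equals $F$.

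The genuinely delicate point is therefore Step 2's use of the central-quotient theorem. The Hall--Petresco identity is needed there to guarantee that $F/Z(F)$ is again Hall--Petresco, so that the universal property applies. I expect that verification, in particular the well-definedness of the induced $R$-powers on the quotient, to be where the real work lies; since it is proved earlier, it can be invoked here.
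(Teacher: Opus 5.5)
Your proposal is correct and is essentially the paper's proof. Like the paper, you induct on the class; you use the (R4) centralizer argument to show that central elements of $G$ map into $Z(C_R^{\mathrm{HP}}(G))$, which is Lemma~\ref{lemma:center_to_center}; and you then combine the central-quotient result with the universal property to obtain a surjection onto $C_R^{\mathrm{HP}}(G)/Z(C_R^{\mathrm{HP}}(G))$. The differences are cosmetic: you factor through $G/\gamma_c(G)$ rather than $G/Z(G)$, which changes nothing since $\gamma_c(G)\subseteq Z(G)$, and the iterated quotients $F^{(k)}$ of your Step~1 are never actually used.
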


This proposition also bears on the work of Jaikin-Zapirain. The
paper~\cite{jaikin2024free} contained an error at this point: it assumed that the
$R$-completion of a nilpotent group is again nilpotent (see the proof of
\cite[Th.~5.8]{jaikin2024free}). Jaikin-Zapirain corrected this
in~\cite{jaikin2026correction}; we expect that the argument can alternatively be
repaired by replacing the $R$-completion functor with the Hall--Petresco
$R$-completion functor and applying the proposition above, together with
Corollary~\ref{cor:HP:free} and Lemma~\ref{lemma:linfty-hall-completion}.

To prove the Hall--Petresco identities for $R$-local groups, we develop a general
method for establishing polynomial identities in nilpotent groups. For a group
$A$ and a nilpotent group $G$, let $\Poly(A,G)$ denote the group of polynomial
functions from $A$ to $G$. We prove the following theorem.

\begin{theorem*}[Theorem~\ref{theorem:spoly-inclusion}]
Let $f: A \to B$ be a group homomorphism, and let $G$ be a nilpotent group such
that the map
    \begin{equation}
    f^*:\Hom(B,G) \to \Hom(A,G)
    \end{equation}
     is injective. Then the homomorphism
    \begin{equation}
    f^*:\Poly(B, G) \to \Poly(A, G)
    \end{equation}
    is a monomorphism.
\end{theorem*}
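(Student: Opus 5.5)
Since $f^*:\Poly(B,G)\to\Poly(A,G)$, $\phi\mapsto \phi\circ f$, is a group homomorphism (pointwise multiplication), it suffices to show that its kernel is trivial. We argue by induction on the nilpotency class $c$ of $G$. For $c\le 1$ the group $G$ is abelian, and we would use the description of polynomial maps of degree $\le d$ from $B$ to an abelian group: they are exactly the maps that factor through $\mathbb{Z}[B]/I^{d+1}$, where $I$ is the augmentation ideal. Equivalently, $\phi$ is polynomial of degree $\le d$ if and only if the $(d+1)$-fold finite differences $\Delta_{b_0}\cdots\Delta_{b_d}\phi$ vanish. A polynomial map $\phi$ with $\phi\circ f=0$ has top-degree finite difference $\Delta_{b_1}\cdots\Delta_{b_d}\phi$ that is a symmetric multi-additive map $B^d\to G$. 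Fixing all but one variable, this gives a homomorphism $B\to G$ that vanishes on $f(A)$. The difference is taken with respect to $b$'s; we first pull back along $f$ to get vanishing on $A^d$, and then apply injectivity of $f^*$ on $\Hom$ one variable at a time. The conclusion is that the top multilinear part vanishes, so $\phi$ has lower degree, and we finish by a downward induction on degree. The degree-$0$ case is constants, where $\phi(1)=\phi(f(1))=1$.

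For the inductive step, we use the central series. Let $Z=\gamma_c(G)$, which is central, and $\bar G=G/Z$. The key preliminary observation is that injectivity of $f^*$ on $\Hom(-,G)$ should pass to $\Hom(-,\bar G)$ and $\Hom(-,Z)$. For $Z\subseteq G$ this is immediate. For $\bar G$ it is not obvious. I expect this to be the main obstacle, and I would rather avoid it by using a different filtration. The alternative is to work with the lower central series of $G$ and use that $\Hom(B,G)\to\Hom(A,G)$ injective forces $\Hom(B,G')\to \Hom(A,G')$ injective for subgroups $G'\le G$, since a subgroup's Hom embeds into $\Hom(-,G)$. So I would induct using subgroups rather than quotients. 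Given $\phi\in\Poly(B,G)$ with $\phi\circ f=1$, consider the composite $\bar\phi:B\to\bar G$. Its vanishing on $f(A)$ has to be transported to vanishing on all of $B$, and for this we need the quotient statement after all.

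To handle the quotient, I would use an alternative characterization of the hypothesis: $f^*$ injective on $\Hom(-,G)$ for nilpotent $G$ should be equivalent to a condition on $f$ itself. Namely, the nilpotent completion of $B$ relative to $G$'s class is "generated" by $f(A)$ in the sense that, for each $n\le c$, the image of $A$ generates $B/\gamma_{n+1}B$ up to the part detected by homomorphisms to $G$. Concretely, I would prove a lemma: if $f^*$ is injective on $\Hom(-,G)$, then it is also injective on $\Hom(-,H)$ for every $H$ in the quasivariety generated by $G$. This quasivariety contains all subgroups of $G$ and all direct powers $G^X$. The set $\Poly(B,G)$ of maps of bounded degree embeds into homomorphisms $\Hom(P_d(B),G)$, where $P_d(B)$ is a universal "polynomial group" (a Passi-type construction, e.g.\ the subgroup of $\mathbb{Z}[B]/I^{d+1}$-type nilpotent object). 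The plan then reduces to showing that $P_d(f):P_d(A)\to P_d(B)$ has image detected by $G$, meaning any two homomorphisms $P_d(B)\to G$ agreeing on the image agree. The previous lemma would give this once $P_d(B)$ is built from $B$ by operations (free products, nilpotent quotients, and subgroups of powers) preserving "$f(A)$ is $G$-dense". That density-preservation check, especially through the tensor-like degree-raising steps that generalize the multilinear argument of the abelian case, is where I expect the real work to lie.
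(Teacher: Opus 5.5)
Your abelian case is sound, and it differs from the paper's argument. You kill the top multi-additive difference $(b_1,\dots,b_d)\mapsto\Delta_{b_1}\cdots\Delta_{b_d}\phi$: it vanishes on $f(A)^d$, then on all of $B^d$ by applying injectivity on $\Hom(B,G)$ one variable at a time, and you then descend in degree. The paper instead goes up in degree using $\xi(x,y)=\varphi(y)^{-1}\varphi(x)^{-1}\varphi(xy)$. This function has degree at most $d$ in each variable and is forced to vanish, so $\varphi$ is a homomorphism.

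The genuine gap is the nilpotent inductive step, which you never complete. You correctly observe that injectivity of $f^*$ passes to subgroups of $G$ but not to quotients; already $f=2\colon\mathbb{Z}\to\mathbb{Z}$ with $G=\mathbb{Z}\twoheadrightarrow\mathbb{Z}/2$ shows this. Yet your argument still projects $\phi$ to $\bar G=G/\gamma_c(G)$. The fallback via a universal polynomial group $P_d(B)$ does not help. The claim that two homomorphisms $P_d(B)\to G$ agreeing on the image of $P_d(A)$ must coincide is just a restatement of injectivity of $\Poly_d(B,G)\to\Poly_d(A,G)$, i.e.\ of the theorem itself. The ``density-preservation'' step that would prove it is left open. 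Your quasivariety lemma concerns closure properties on the target side and gives no handle on a construction applied to the source $B$.

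The missing idea is to detect centrality of $\phi(x)$ through commutators instead of projecting to a quotient; this is exactly your instinct to induct through subgroups. Fix $g\in G$ and set $\psi_g(x)=[\phi(x),g]=\phi(x)^{-1}g^{-1}\phi(x)g$. This is a polynomial map, being a pointwise product of $\phi^{-1}$, constants and $\phi$. It takes values in the subgroup $[G,G]$, which has smaller nilpotency class, and it satisfies $\psi_g\circ f\equiv 1$. Since $\Hom(B,[G,G])$ embeds in $\Hom(B,G)$ compatibly with $f^*$, the hypothesis holds for $[G,G]$. The induction hypothesis therefore gives $\psi_g\equiv 1$ for every $g$, i.e.\ $\phi(B)\subseteq Z(G)$. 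Now $\phi$ corestricts to a polynomial map into the abelian subgroup $Z(G)$, for which the hypothesis again holds, and your abelian case gives $\phi\equiv 1$. This is the paper's inductive step; combined with your abelian argument it yields a complete proof.
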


The question of whether $R$-localization maps nilpotent groups to nilpotent
groups can be reduced from arbitrary commutative rings to the case of an
$E$-ring. Recall that a commutative ring $R$ is called an $E$-ring if the
evaluation map at $1$ gives an isomorphism
\begin{equation}
\mathrm{End}(R^+) \cong R;
\end{equation}
in other words, $R$ is an $E$-ring if the additive group $R^+$ is $R$-local. For
example, $\mathbb{Z}[P^{-1}]$ and $\mathbb{Z}_p$ are $E$-rings. For every
localization $L$ on the category of groups, the group $L(\mathbb{Z})$ carries a
natural $E$-ring structure, and there is a unique morphism of localizations
$L_{L(\mathbb{Z})}\to L$. We show that for every commutative ring $R$, the
$E$-ring
\begin{equation}
R^e=L_R(\mathbb{Z})
\end{equation}
is a quotient of $R$, and that the natural map $L_{R^e}(G)\to L_R(G)$ is an
epimorphism (Proposition~\ref{prop:epi-L_E->L_R}). Consequently, if the
$R^e$-localization preserves nilpotency, so does the $R$-localization.

If $R$ is binomial, then $R^e$ is also binomial
(Corollary~\ref{cor:R^e-binomial}). However, not every $E$-ring is binomial. We
show that the subring $\mathcal{E}$ of $\mathbb{Z}_7$ defined by
\begin{equation}
\mathcal{E} = \{x \in \mathbb{Z}_7 \mid \exists\, n\geq 0 : 7^n x \in \mathbb{Z}[\sqrt{2}]\}
\end{equation}
is an $E$-ring admitting no normed pre-$\lambda$-ring structure
(Proposition~\ref{prop:seven-saturation}). We also show that the localization
$L_\mathcal{E}$ is not weakly right exact (Corollary~\ref{cor:E-localisation}).
This leaves open the following problem.

\begin{problem}
\label{prob:seven-saturation-preservation}
Does $L_{\mathcal{E}}$ map nilpotent groups to nilpotent groups?
\end{problem}

As an application of our results, we compute the $R$-localizations of free groups
and finitely generated torsion-free nilpotent groups. We show that, for a
torsion-free $E$-ring $R$ and a set $X$, there is an isomorphism
\begin{equation}
L_R(F(X))\cong F_{R}(X),
\end{equation}
where $F_{R}(X)$ is the free $R$-group on $X$ studied by Myasnikov and
Remeslennikov~\cite{myasnikov_groups_1994, myasnikov1996exponential} and by
Jaikin-Zapirain~\cite{jaikin2024free}
(Proposition~\ref{prop:free-e-ring-groups-local}). For a finitely generated
torsion-free nilpotent group $G$ and a binomial ring $R$, we show that $L_R(G)$
coincides with Hall's $R^e$-completion, defined in terms of Mal'cev bases:
\begin{equation}
L_R(G)\cong H_{R^e}(G)
\end{equation}
(see Proposition~\ref{prop:e-ring-localization-hall-completion}). 

In the final section, we develop a method for twisting $R$-group structures:
starting from an $R$-group, it produces new $R$-group structures on the same
underlying group. We apply this method to construct a nilpotent
$\bb Z_p$-group $\mathcal G'$ of class $3$ for which the quotient
$\mathcal G'/Z(\mathcal G')$ does not inherit a $\bb Z_p$-group structure (Theorem~\ref{th:example:non-ideal}).

\section{\texorpdfstring{$R$}{R}-groups}\label{section:r-groups}

In this section we introduce the notions of a weak $R$-group and an $R$-group,
the category $\RGr$ of $R$-groups, and the $R$-completion functor
$C_R:\Gr\to\RGr$.

\begin{notation}
For elements $x,y$ of a group, we define conjugation and the commutator by
\begin{equation}
	x^y = y^{-1} x y, \qquad  [x,y] = x^{-1} y^{-1} xy.
\end{equation}
We will freely use the following formulas:
\begin{equation}
	[xy,z] = [x,z]^y \cdot [y,z], \qquad [x,yz] = [x,z]\cdot [x,y]^z.
\end{equation}
\end{notation}

\begin{definition}[Weak $R$-group]
\label{def:weak-R-group}
Let $R$ be a commutative ring. A \emph{weak $R$-group} (or an $R$-group in the
sense of Lyndon) is a group $G$ equipped with a function
\begin{equation}
	G \times R \to G, \quad (x, r) \mapsto x^r
\end{equation}
satisfying the following axioms:
\begin{itemize}
	\item[(R1)] $x^{r+s} = x^{r} x^{s}$;
	\item[(R2)] $x^1 = x$;
	\item[(R3)] $(x^r)^s = x^{rs}$;
	\item[(R4)] $(x^y)^r = (x^r)^y$,
\end{itemize}
for all $x,y\in G$ and $r,s\in R$. These axioms imply the identities
\begin{equation}
	x^0=1, \qquad (x^r)^{-1}=x^{-r}, \qquad 1^r=1.
\end{equation}
Moreover, it is known that
\begin{equation}
\label{eq:commutativity_of_powers}
	xy = yx \ \ \Rightarrow \ \  x^r y^s = y^s x^r
\end{equation}
for all $r,s\in R$~\cite[Property~2]{myasnikov_groups_1994}. A subgroup of a weak
$R$-group is called a \emph{weak $R$-subgroup} if it is closed under the
$R$-power operations $x\mapsto x^r$.
\end{definition}

\begin{lemma}
\label{lemma:centralizer}
Let $G$ be a weak $R$-group and let $X\subseteq G$ be a subset. Then the
centralizer $\mathsf{Cent}_G(X)$ is a weak $R$-subgroup. In particular, the
center $Z(G)$ is a weak $R$-subgroup.
\end{lemma}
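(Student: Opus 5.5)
The centralizer $\mathsf{Cent}_G(X)$ is already a subgroup of $G$, so the only thing to prove is that it is closed under the $R$-power operations. That is, if $x\in\mathsf{Cent}_G(X)$ and $r\in R$, then $x^r$ commutes with every element of $X$. The center statement is the special case $X=G$, since $Z(G)=\mathsf{Cent}_G(G)$.

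The key input is the implication \eqref{eq:commutativity_of_powers}, quoted from Myasnikov--Remeslennikov. Fix $y\in X$ and $x\in\mathsf{Cent}_G(X)$, so that $xy=yx$. Apply \eqref{eq:commutativity_of_powers} with exponents $r$ and $s=1$. This gives $x^r y^1 = y^1 x^r$, and by axiom (R2) $y^1=y$. Hence $x^r$ commutes with $y$. Since $y\in X$ was arbitrary, $x^r\in\mathsf{Cent}_G(X)$.

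If one prefers not to rely on the cited property, the same conclusion follows directly from axiom (R4). If $xy=yx$, then $x^y=x$, and (R4) gives
\begin{equation*}
(x^r)^y=(x^y)^r=x^r,
\end{equation*}
which is exactly $x^r y = y x^r$. This route uses only the axioms and avoids any subtlety, so I would present it as the main argument and mention \eqref{eq:commutativity_of_powers} only as a remark.

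There is no real obstacle here. The only point needing care is to observe that closure of $\mathsf{Cent}_G(X)$ under the group operations is the standard fact for centralizers, and that weak $R$-subgroup means nothing beyond this closure together with closure under powers.
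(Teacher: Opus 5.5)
Your proof is correct and takes the same approach as the paper: the paper simply invokes \eqref{eq:commutativity_of_powers} to conclude that $g^r$ commutes with everything that $g$ commutes with. Your alternative derivation from (R4), via $(x^r)^y=(x^y)^r=x^r$, is a self-contained proof of exactly the special case $s=1$ that is needed, and the paper itself uses this (R4) argument later in Lemma~\ref{lemma:center_to_center}.
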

\begin{proof}
This follows from~\eqref{eq:commutativity_of_powers}: if $g$ commutes with every
element of $X$, then so does $g^r$ for every $r\in R$.
\end{proof}

\begin{proposition}
\label{prop:commutative}
Let $G$ be a weak $R$-group and let $X\subseteq G$ be a set of pairwise commuting
elements. Then the weak $R$-subgroup $\langle X \rangle_R$ generated by $X$ is
abelian.
\end{proposition}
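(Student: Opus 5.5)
We use the centralizer trick twice, via Lemma~\ref{lemma:centralizer}. Let $C=\mathsf{Cent}_G(X)$. By Lemma~\ref{lemma:centralizer}, $C$ is a weak $R$-subgroup of $G$. Since the elements of $X$ commute pairwise, $X\subseteq C$, and therefore $\langle X\rangle_R\subseteq C$. In other words, every element of $\langle X\rangle_R$ commutes with every element of $X$.

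Next, fix an arbitrary $g\in\langle X\rangle_R$ and set $C_g=\mathsf{Cent}_G(\{g\})$. By the first step, $g$ commutes with every element of $X$, so $X\subseteq C_g$. Lemma~\ref{lemma:centralizer} again shows that $C_g$ is a weak $R$-subgroup, hence $\langle X\rangle_R\subseteq C_g$. Thus $g$ commutes with every element of $\langle X\rangle_R$. As $g$ was arbitrary, any two elements of $\langle X\rangle_R$ commute, and so $\langle X\rangle_R$ is abelian.

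The argument requires only that $\langle X\rangle_R$ be the smallest weak $R$-subgroup containing $X$, i.e.\ the intersection of all weak $R$-subgroups containing $X$. That intersection is itself closed under multiplication, inversion and the $R$-powers, so it is a weak $R$-subgroup. The only substantive input is the implication \eqref{eq:commutativity_of_powers}, which is already packaged in Lemma~\ref{lemma:centralizer}. I therefore expect no real obstacle; the one point that needs care is to apply the centralizer argument twice, first with respect to $X$ and then with respect to the single element $g$.
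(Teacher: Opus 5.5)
Your proof is correct and is essentially the paper's argument: both apply Lemma~\ref{lemma:centralizer} twice. The paper packages this as the observation that $\mathsf{Cent}_G(\mathsf{Cent}_G(X))$ is an abelian weak $R$-subgroup containing $X$, whereas you first obtain $\langle X\rangle_R\subseteq \mathsf{Cent}_G(X)$ and then apply the lemma to $\mathsf{Cent}_G(\{g\})$ for each $g\in\langle X\rangle_R$.
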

\begin{proof}
Since the elements of $X$ commute pairwise, $X\subseteq \mathsf{Cent}_G(X)$, and
hence $\mathsf{Cent}_G(\mathsf{Cent}_G(X)) \subseteq \mathsf{Cent}_G(X)$. In
particular $\mathsf{Cent}_G(\mathsf{Cent}_G(X))$ contains $X$ and is abelian, as
any two of its elements both lie in $\mathsf{Cent}_G(X)$ and so commute. By
Lemma~\ref{lemma:centralizer}, $\mathsf{Cent}_G(\mathsf{Cent}_G(X))$ is a weak
$R$-subgroup; therefore it contains $\langle X \rangle_R$, which is consequently
abelian.
\end{proof}

\begin{definition}[$R$-homomorphism]
An \emph{$R$-homomorphism} of weak $R$-groups $f:G\to H$ is a group homomorphism
such that $f(x^r)=f(x)^r$ for all $x\in G$ and $r\in R$. The category of weak
$R$-groups and $R$-homomorphisms is denoted by $\RGr^{\mathrm{weak}}$.
\end{definition}

\begin{definition}[Weak $R$-ideal]
Let $G$ be a weak $R$-group, and let $V$ be a normal subgroup of $G$ that is also
a weak $R$-subgroup. Then $V$ is called a \emph{weak $R$-ideal} if, for all $v\in
V$, $g\in G$, and $r\in R$,
\begin{equation}
g^{-r} (gv)^r \in V.
\end{equation}
The kernel of every $R$-homomorphism of weak $R$-groups is a weak $R$-ideal.
Conversely, if $V$ is a weak $R$-ideal of $G$, then $G/V$ inherits a natural weak
$R$-group structure for which the canonical projection $G\to G/V$ is an
$R$-homomorphism~\cite[Prop.~4]{myasnikov_groups_1994}.
\end{definition}

\begin{definition}[$R$-group]
\label{def:R-group}
A weak $R$-group $G$ is called an \emph{$R$-group} (or an $R$-group in the sense
of Myasnikov--Remeslennikov) if it satisfies the following axiom:
\begin{itemize}
    \item[(R5)] $xy=yx \Rightarrow (xy)^r=x^ry^r$.
\end{itemize}
A weak $R$-subgroup of an $R$-group is called an \emph{$R$-subgroup}. The
$R$-groups form the full subcategory of $\RGr^{\mathrm{weak}}$ spanned by the
$R$-groups,
\begin{equation}
\RGr \subseteq \RGr^{\mathrm{weak}}.
\end{equation}
\end{definition}

\begin{definition}[$R$-ideal]
Let $G$ be an $R$-group. A normal subgroup $V$ of $G$ is called an
\emph{$R$-ideal} if it is an $R$-subgroup and, for all $g,h\in G$ and $r\in R$,
\begin{equation}
[g,h]\in V \Rightarrow h^{-r} g^{-r} (gh)^r \in V.
\end{equation}
Every $R$-ideal is a weak $R$-ideal, and the kernel of every $R$-homomorphism is
an $R$-ideal. If $V$ is an $R$-ideal of $G$, then $G/V$ inherits a natural
$R$-group structure for which the canonical projection $G\to G/V$ is an
$R$-homomorphism~\cite[Prop.~5]{myasnikov_groups_1994}. The intersection of any
family of $R$-ideals is again an $R$-ideal; hence, for every subset $S$ of an
$R$-group $G$, the $R$-ideal generated by $S$ is well defined.
\end{definition}

\begin{proposition}
\label{prop:Z_2(G)}
If $G$ is an $R$-group, then the center $Z(G)$ is a weak $R$-ideal, and the second
term $Z_2(G)$ of the upper central series is an $R$-subgroup.
\end{proposition}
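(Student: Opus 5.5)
Both parts reduce to the axioms (R1)--(R5) together with the commutation property \eqref{eq:commutativity_of_powers}.

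\emph{$Z(G)$ is a weak $R$-ideal.} By Lemma~\ref{lemma:centralizer}, $Z(G)$ is a weak $R$-subgroup, and it is normal, so only the ideal condition needs checking. Let $v\in Z(G)$, $g\in G$, $r\in R$. Since $v$ commutes with $g$, axiom (R5) gives $(gv)^r=g^rv^r$. Hence $g^{-r}(gv)^r=v^r$, and $v^r\in Z(G)$ by Lemma~\ref{lemma:centralizer}.

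\emph{$Z_2(G)$ is an $R$-subgroup.} It is a subgroup, so we must show that $x\in Z_2(G)$ implies $x^r\in Z_2(G)$. That is, for every $y\in G$ we need $[x^r,y]\in Z(G)$. Fix $y$ and set $z=[x,y]\in Z(G)$, so that $x^y=xz$ with $z$ central. Apply (R4) and then (R5), using that $x$ and $z$ commute:
\begin{equation}
(x^r)^y=(x^y)^r=(xz)^r=x^rz^r.
\end{equation}
Therefore $[x^r,y]=x^{-r}(x^r)^y=z^r$, which lies in $Z(G)$ by Lemma~\ref{lemma:centralizer}. So $x^r\in Z_2(G)$.

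There is no real obstacle in either part. The one point needing care is using (R5), not just the weak axioms, to split $(xz)^r$ and $(gv)^r$; this is exactly why the statement assumes an $R$-group rather than a weak $R$-group.
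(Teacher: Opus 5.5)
Your proof is correct. The first part is the paper's argument verbatim. For $Z_2(G)$ you take a slightly different route.

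The paper reuses the first part. Since $Z(G)$ is a weak $R$-ideal, $G/Z(G)$ is a weak $R$-group, via the cited quotient construction of Myasnikov--Remeslennikov. The paper then applies Lemma~\ref{lemma:centralizer} in $G/Z(G)$ to see that $Z(G/Z(G))$ is a weak $R$-subgroup, and pulls back along the $R$-homomorphism $G\to G/Z(G)$.

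You instead stay in $G$ and compute directly from (R4) and (R5) that $[x^r,y]=[x,y]^r$ for $x\in Z_2(G)$. This is the paper's argument unwound. The congruence $(x^r)^y=(x^y)^r\equiv x^r \pmod{Z(G)}$ encoded by the quotient step is exactly your identity $(xz)^r=x^rz^r$ for central $z$.

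Your version buys three things: it is self-contained, since it never needs the quotient weak $R$-group structure; the two parts become logically independent; and it yields an explicit commutator formula. The paper's version is shorter and more conceptual. It shows that the weak-ideal property is precisely what allows the centralizer lemma to be applied one level up the upper central series.
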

\begin{proof}
By Lemma~\ref{lemma:centralizer}, $Z(G)$ is a weak $R$-subgroup. If $z\in Z(G)$,
$g\in G$, and $r\in R$, then (R5) gives $g^{-r}(gz)^r = z^r\in Z(G)$, so $Z(G)$ is
a weak $R$-ideal and $G/Z(G)$ is a weak $R$-group. Applying
Lemma~\ref{lemma:centralizer} again, $Z(G/Z(G))$ is a weak $R$-subgroup of
$G/Z(G)$; since $Z_2(G)$ is its preimage under $G\to G/Z(G)$, it follows that
$Z_2(G)$ is an $R$-subgroup of $G$.
\end{proof}

\begin{remark}[Universal algebra]
\label{remark:variety}
	The category of $R$-groups can be described in terms of universal algebra  (see \cite{amaglobeli2023varieties}).
	Consider the algebraic signature $\sigma_R$ consisting of the binary
	multiplication operation, the constant symbol $1$, and a unary operation
	$(-)^r$ for each $r\in R$. Then the $R$-groups form a quasi-variety of
	$\sigma_R$-algebras, defined by the monoid identities, the identities
	(R1)--(R4), and the quasi-identity (R5). That every object of this
	quasi-variety is a group follows because the unary operation corresponding
	to $-1\in R$ assigns to each element its inverse.
\end{remark}

\begin{remark}[$\RGr$ is locally presentable]
The category $\RGr$ of $R$-groups is locally finitely presentable. Indeed, a
cocomplete category with a strong generator consisting of finitely presentable
objects is locally finitely presentable~\cite[Th.~1.11]{adamek1994locally}; the
category $\RGr$ is a quasi-variety, hence cocomplete, and the free $R$-group on
one generator is a finitely presentable strong
generator~(see~\cite[\S3B]{adamek1994locally}).
\end{remark}

\begin{definition}[Free $R$-group]
The forgetful functor $\RGr \to \mathrm{Set}$ admits a left adjoint
\begin{equation}
 F_R:\mathrm{Set}\to \RGr
\end{equation}
(see~\cite[Th.~4]{myasnikov_groups_1994}). For a set $X$, the $R$-group $F_R(X)$
is called the \emph{free $R$-group on $X$}. The image of the unit $X\to F_R(X)$
generates $F_R(X)$ as an $R$-group.
\end{definition}

\begin{definition}[$R$-completion]
The forgetful functor $U_R : \RGr \to \Gr$ admits a left adjoint
\begin{equation}
		C_R : \Gr \leftrightarrows \RGr : U_R,
\end{equation}
which we call the \emph{$R$-completion functor}
(see~\cite[Th.~1]{myasnikov_groups_1994}). For every group $G$, the image of the
unit $G\to C_R(G)$ generates $C_R(G)$ as an $R$-group. Moreover,
\begin{equation}
C_R(F(X))\cong F_R(X),
\end{equation}
where $F(X)$ is the free group on $X$.
\end{definition}

\begin{remark}[$R$-completion preserves surjections]
\label{R-comp-surj}
If $\varphi:G\to H$ is a surjective group homomorphism, then
$C_R(\varphi):C_R(G)\to C_R(H)$ is a surjective $R$-homomorphism. Indeed, both
$C_R(G)$ and $C_R(H)$ are generated, as $R$-groups, by the images of $G$ and $H$
respectively, and $C_R(\varphi)$ carries the generators of $C_R(G)$ onto those of
$C_R(H)$.
\end{remark}

\begin{example}
	Regard $R$ as an $R$-group with $x^r = xr$ for all $x,r\in R$. Then $R$
	satisfies the universal property of $C_R(\mathbb{Z})$: for every $R$-group
	$H$ and every homomorphism $u : \mathbb{Z} \to U_R(H)$ there is a unique
	$R$-homomorphism $f: R \to H$ with $u = f \circ \eta_{\mathbb{Z}}$, since $f$
	must satisfy $f(r) = u(1)^r$. Therefore $C_R(\mathbb{Z}) \cong R$ is the free
	$R$-group on one generator.
\end{example}

\section{\texorpdfstring{$R$}{R}-local groups}\label{section:r-local}

In this section we introduce the class of $R$-local groups and the
$R$-localization functor. We characterize the $R$-local groups in terms of
$R$-groups and show that the $R$-localization $L_R(G)$ of a group $G$ is the
$R/1$-nullification of the $R$-completion $C_R(G)$.

\begin{definition}[Localization with respect to a morphism]
Let $C$ be a category and $f:a\to b$ be its morphism. An object $x$ is called \emph{$f$-local}
if $f$ induces a bijection
\begin{equation}
	f^* : \Hom(b,x) \overset{\cong}\longrightarrow \Hom(a,x).
\end{equation}
If $C$ is locally presentable, then the full subcategory of $f$-local objects is reflective \cite[Th.~1.39]{adamek1994locally}, and the associated
localization functor is denoted by
\begin{equation}
	L_f : C \longrightarrow C.
\end{equation}
Recall that a \emph{coaugmentation} of an endofunctor $\mathcal{F}$ is a natural
transformation $\Id \to \mathcal{F}$ from the identity functor, and a
\emph{coaugmented functor} is an endofunctor equipped with a coaugmentation. The
functor $L_f$ admits a natural coaugmentation $\eta:\Id \to L_f$ for which
$\eta_x:x \to L_f(x)$ is the universal map from $x$ to an $f$-local object. 
See~\cite[\S1C]{adamek1994locally}, \cite{casacubertaOrthogonalPairsCategories} for further details.
\end{definition}

\begin{definition}[$R$-localization]
Let $R$ be a commutative ring. We regard the unique ring homomorphism
$\bb{Z} \to R$ as a morphism in the category of groups; by abuse of notation, we
also write $R$ for its additive group. Groups local with respect to this
homomorphism are called \emph{$R$-local}, and the associated functor is denoted
by
\begin{equation}
	L_R : \Gr \longrightarrow \Gr.
\end{equation}
Since $G\cong \Hom(\bb{Z},G)$ for every group $G$, the group $G$ is $R$-local if
and only if the map
\begin{equation}
	\Hom(R,G) \to G, \qquad \varphi\mapsto \varphi(1)
\end{equation}
is a bijection; equivalently, $G$ is $R$-local if and only if, for every $x\in
G$, there is a unique homomorphism $\varphi_x : R \to G$ with $\varphi_x(1)=x$.
\end{definition}

\begin{proposition}
\label{prop:R-ring_structure_on_a_local_group}
Let $R$ be a commutative ring and let $G$ be an $R$-local group. Then:
\begin{enumerate}
    \item $G$ admits a unique $R$-group structure;
    \item for every $R$-group $H$, every homomorphism $H\to G$ is an
    $R$-homomorphism.
\end{enumerate}
\end{proposition}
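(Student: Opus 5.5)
The plan is to define the structure by $x^r := \varphi_x(r)$, where $\varphi_x:R\to G$ is the unique homomorphism with $\varphi_x(1)=x$. All the axioms, and both uniqueness claims, then follow from the uniqueness of such homomorphisms. The basic tool is the following observation. If $\psi:R\to G$ is any additive homomorphism with $\psi(1)=x$, then $\psi=\varphi_x$.

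\emph{Step 1: the axioms for this structure.}
\begin{itemize}
\item (R1) and (R2) hold because $\varphi_x$ is a homomorphism with $\varphi_x(1)=x$.
\item (R3): the map $r\mapsto \varphi_x(rs)$ is a homomorphism $R\to G$ sending $1$ to $x^s$. Hence it equals $\varphi_{x^s}$, which gives $(x^s)^r=x^{rs}$; commutativity of $R$ handles the order of the exponents.
\item (R4): for $y\in G$, the map $r\mapsto \varphi_x(r)^y$ is a homomorphism sending $1$ to $x^y$, so it equals $\varphi_{x^y}$.
\item (R5): suppose $xy=yx$. The images of $\varphi_x$ and $\varphi_y$ commute. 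To see this, fix $s$ and note that $y^{-s}\varphi_x(\cdot)y^{s}$ is a homomorphism sending $1$ to $y^{-s}xy^{s}$. So I first need $x$ to commute with $y^s$. For that, apply the same conjugation argument to $\varphi_y$ conjugated by $x$: it is a homomorphism sending $1$ to $y^x=y$, hence $(y^s)^x=y^s$. Conjugating $\varphi_x$ by $y^s$ then gives $(x^r)^{y^s}=x^r$. With this in hand, $r\mapsto x^r y^r$ is a homomorphism, since $x^r y^r x^{r'} y^{r'} = x^{r+r'}y^{r+r'}$ by the commutation just established. It sends $1$ to $xy$, so it equals $\varphi_{xy}$.
\end{itemize}
This is essentially the argument behind \eqref{eq:commutativity_of_powers}, and I expect it to be the only mildly delicate point: one has to get the order of the commutation arguments right.

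\emph{Step 2: uniqueness of the structure.} In any $R$-group structure on $G$, axiom (R1) together with (R2) says that $r\mapsto x^r$ is an additive homomorphism sending $1$ to $x$. By the observation it must equal $\varphi_x$. So every $R$-group structure on $G$ coincides with the one above, which proves part (1).

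\emph{Step 3: part (2).} Let $H$ be an $R$-group and $f:H\to G$ a homomorphism, and fix $h\in H$. The composite $r\mapsto f(h^r)$ is a homomorphism $R\to G$, by (R1) in $H$, and it sends $1$ to $f(h)$. Hence it equals $\varphi_{f(h)}$, that is, $f(h^r)=f(h)^r$.

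No earlier results beyond the definitions are needed. Only the existence-and-uniqueness property of $R$-local groups is used.
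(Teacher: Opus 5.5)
Your proposal is correct and follows the paper's proof essentially step for step: you define $x^r:=\varphi_x(r)$ and obtain (R1)--(R5), the uniqueness of the structure, and part (2) from the single principle that a homomorphism $R\to G$ is determined by its value at $1$. The only difference is in (R5), where you spell out the two-stage commutation argument (first $y^s$ commutes with $x$, then $x^r$ commutes with $y^s$) that the paper compresses into ``by (R4) \ldots\ similarly''.
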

\begin{proof}
	Consider the function $G\times R \to G$ given by $(x,r) \mapsto x^r :=
	\varphi_x(r)$. We check that it is an $R$-group structure.

    (R1) follows from the fact that $\varphi_x$ is a homomorphism.

    (R2) follows from $\varphi_x(1)=x$.

    (R3) Fix $r\in R$ and $x\in G$, and consider the homomorphism $R \to G$,
    $s\mapsto x^{rs}$. Since it sends $1$ to $x^r$, it coincides with
    $\varphi_{x^r}$, and hence $x^{rs} = (x^r)^s$.

    (R4) Consider the homomorphism $R\to G$, $r\mapsto y^{-1} x^r y$. Since it
    sends $1$ to $y^{-1}xy$, it coincides with $\varphi_{y^{-1}xy}$, and hence
    $y^{-1} x^r y = (y^{-1}xy)^r$.

    (R5) Suppose $x,y\in G$ commute. By (R4), $y^{-1} x^r y = x^r$, so $x^r$
    commutes with $y$ for every $r\in R$; similarly, $x^r$ commutes with $y^s$
    for all $r,s\in R$. Consider the map $R\to G$, $r\mapsto x^r y^r$. Since
    $x^{r+s}y^{r+s} = x^r x^s y^r y^s = (x^r y^r)(x^s y^s)$, it is a homomorphism
    sending $1$ to $xy$, and hence $(xy)^r = x^r y^r$.

	To prove uniqueness, let $(x,r) \mapsto \psi_x(r)$ be any $R$-group structure
	on $G$. By (R1) and (R2), each $\psi_x: R \to G$ is a homomorphism sending
	$1$ to $x$, so $\psi_x = \varphi_x$.

	Now let $f : H\to G$ be a homomorphism from an $R$-group $H$. The equality
	$f(x^r) = f(x)^r$ holds because the maps $r\mapsto f(x^r)$ and $r\mapsto
	f(x)^r$ are homomorphisms $R\to G$ that agree at $r=1$.
\end{proof}

\begin{remark}
From now on, every $R$-local group is treated as an $R$-group.
\end{remark}

\begin{definition}[$R/1$-null groups]
For a commutative ring $R$, we write
\begin{equation}
R/1 = R^+/ \langle 1_R \rangle
\end{equation}
for the quotient of the additive group $R^+$ by the subgroup generated by the
unit $1_R$.  
We say that a group $G$ is $R/1$-null if $\Hom(R/1,G)$ is trivial.  
\end{definition}

\begin{proposition}
\label{prop:r-null}
A group is $R$-local if and only if it admits an $R$-group structure and is $R/1$-null.
\end{proposition}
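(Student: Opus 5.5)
We prove the two implications separately, each reducing to the uniqueness of homomorphisms $R\to G$ with prescribed value at $1$.

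\emph{($\Rightarrow$)} Suppose $G$ is $R$-local. Proposition~\ref{prop:R-ring_structure_on_a_local_group} already provides an $R$-group structure on $G$, so it remains to show that $G$ is $R/1$-null. Let $\psi: R/1\to G$ be a homomorphism and compose it with the projection $\pi: R\to R/1$. The composite $\psi\circ\pi: R\to G$ sends $1$ to $1_G$, and so does the trivial homomorphism. By uniqueness in the definition of $R$-local, $\psi\circ\pi$ is trivial. Since $\pi$ is surjective, $\psi$ is trivial.

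\emph{($\Leftarrow$)} Suppose $G$ carries an $R$-group structure $(x,r)\mapsto x^r$ and $\Hom(R/1,G)=0$. For existence, fix $x\in G$. Axioms (R1) and (R2) say exactly that $r\mapsto x^r$ is a homomorphism $R\to G$ sending $1$ to $x$.

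Uniqueness is the main step. Let $\varphi:R\to G$ be any homomorphism with $\varphi(1)=x$. We want to divide $\varphi$ by $r\mapsto x^r$ and get a homomorphism that kills $1$, which then factors through $R/1$.

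First we show that the image of $\varphi$ commutes with $x$ and with all its powers $x^s$. Since $R$ is abelian, $\varphi(R)$ is an abelian subgroup containing $x$. So every $\varphi(r)$ commutes with $x$, and by \eqref{eq:commutativity_of_powers} it commutes with every $x^s$.

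Next define $\delta(r)=\varphi(r)\,x^{-r}$. For $r,s\in R$ we compute
\[
\delta(r+s)=\varphi(r)\varphi(s)x^{-s}x^{-r}=\varphi(r)x^{-r}\varphi(s)x^{-s}=\delta(r)\delta(s),
\]
where the middle step uses that $x^{-r}$ commutes with $\varphi(s)x^{-s}$. This holds because $\varphi(s)$ commutes with $x^{-r}$, and $x^{-s}$ commutes with $x^{-r}$ by (R1). Hence $\delta$ is a homomorphism.

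Moreover $\delta(1)=x\,x^{-1}=1$, using (R2). Therefore $\delta$ factors through $R/1$, and it is trivial by the nullness hypothesis. This gives $\varphi(r)=x^r$ for all $r$, which proves uniqueness.

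Only the commutation check needs care; note that axiom (R5) is not used in this direction.
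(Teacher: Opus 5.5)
Your proof is correct and follows the paper's argument essentially step for step. The forward direction composes with the surjection $R\to R/1$ and uses uniqueness. The converse divides $\varphi$ by $r\mapsto x^r$ to obtain a homomorphism killing $1$, which is then trivial by nullness. The only differences are cosmetic: you use $\varphi(r)x^{-r}$ where the paper uses $x^r\varphi(r)^{-1}$, and you cite the commuting-powers property where the paper invokes (R4) directly. As you note, (R5) is indeed not needed in that direction.
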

\begin{proof}
If $G$ is $R$-local, then it admits an $R$-group
structure (Proposition~\ref{prop:R-ring_structure_on_a_local_group}). Moreover,
every homomorphism $f: R/1 \to G$ yields, by composition with the projection
$R\to R/1$, a homomorphism $f':R\to G$ with $f'(1_R)=1$. Since $G$ is $R$-local,
the only such homomorphism is the trivial one, so $f'$ is trivial; as $R\to R/1$
is surjective, $f$ is trivial as well. Hence $G$ is $R/1$-null.

	Conversely, assume that $G$ is an $R$-group and is $R/1$-null. We must show
	that for every $x\in G$ there is a unique homomorphism $\varphi: R \to G$
	with $\varphi(1)=x$. Existence is clear: take $\varphi(r):=x^r$. For
	uniqueness, let $\varphi:R\to G$ be a homomorphism with $\varphi(1)=x$, and
	consider the map $\tau:R\to G$ defined by
	\begin{equation}
		\tau(r) = x^r \varphi(r)^{-1}.
	\end{equation}
	Since the additive group of $R$ is abelian, $\varphi(r)$ and $\varphi(s)$
	commute for all $r,s\in R$; in particular, $\varphi(r)$ commutes with $x =
	\varphi(1)$. Then (R4) implies that $\varphi(r)$ commutes with $x^s$ for all
	$r,s\in R$. The computation
	\begin{equation}
		\tau(r+s) = x^r x^s \varphi(s)^{-1}\varphi(r)^{-1} =
		x^r \varphi(r)^{-1} x^s \varphi(s)^{-1} =
		\tau(r)\tau(s)
	\end{equation}
	shows that $\tau$ is a homomorphism with $\tau(1) = 1$. Therefore $\tau$
	factors through $R/1$, and since $G$ is $R/1$-null, we obtain $\tau(r) = 1$,
	that is, $\varphi(r) = x^r$.
\end{proof}

\begin{corollary}
\label{cor:R-subgroup-local}
	If $G$ is an $R$-local group and $H$ is an $R$-subgroup of $G$, then $H$ is
	$R$-local.
\end{corollary}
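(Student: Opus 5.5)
The plan is to apply the characterization in Proposition~\ref{prop:r-null}: a group is $R$-local exactly when it carries an $R$-group structure and is $R/1$-null. I would verify these two conditions for $H$ one at a time.

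First, the $R$-group structure. By Proposition~\ref{prop:R-ring_structure_on_a_local_group}, $G$ has a unique $R$-group structure, and $H$ is closed under the power maps $x\mapsto x^r$. So restricting the exponentiation $G\times R\to G$ gives a map $H\times R\to H$.
\begin{itemize}
\item The identities (R1)--(R4) hold for all elements of $G$, so they hold in particular for elements of $H$ (with conjugating elements $y\in H$).
\item The quasi-identity (R5) only involves commuting pairs $x,y\in H$, which are also commuting pairs in $G$, so it holds as well.
\end{itemize}
Hence $H$ is an $R$-group, which is just the statement that a weak $R$-subgroup of an $R$-group is an $R$-group.

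Second, $R/1$-nullity. Take any homomorphism $f\colon R/1\to H$ and compose it with the inclusion $H\hookrightarrow G$. This gives a homomorphism $R/1\to G$. Since $G$ is $R$-local, Proposition~\ref{prop:r-null} says $G$ is $R/1$-null, so the composite is trivial. The inclusion is injective, so $f$ itself is trivial, and $H$ is $R/1$-null.

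Applying Proposition~\ref{prop:r-null} in the converse direction then shows that $H$ is $R$-local. No step here is a real obstacle. The only point needing care is to use the restricted $R$-structure on $H$, which Proposition~\ref{prop:r-null} allows since it only requires the existence of some $R$-group structure.
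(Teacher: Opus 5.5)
Your proposal is correct and is the paper's proof. You use the restricted $R$-group structure on $H$, observe that $H$ is $R/1$-null as a subgroup of the $R/1$-null group $G$, and conclude with Proposition~\ref{prop:r-null}; the only extra is that you check (R1)--(R5) on $H$ explicitly, which the paper takes as immediate from the definition of an $R$-subgroup.
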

\begin{proof}
	By assumption $H$ is an $R$-group, and as a subgroup of the $R/1$-null group
	$G$ it is itself $R/1$-null. By Proposition~\ref{prop:r-null}, $H$ is
	$R$-local.
\end{proof}

\begin{definition}[$A$-nullification of an $R$-group]
Let $A$ be an $R$-group. An $R$-group $G$ is called \emph{$A$-null} if $\Hom_{\RGr}(A,G)$ is
trivial. The $A$-null $R$-groups can be described as $f$-local $R$-groups in two
equivalent ways: as the groups local with respect to the map $1\to A$, or as the
groups local with respect to the map $A\to 1$. The associated localization
functor is called the \emph{$A$-nullification functor} and is denoted by
\begin{equation}
	\mathsf{N}_A : \RGr \to \RGr.
\end{equation}
The functor $\mathsf{N}_A$ can be constructed explicitly as follows. For every $R$-group $G$,
let $\mathsf{r}^1_A(G)$ be the $R$-ideal generated by the images of all
$R$-homomorphisms $A\to G$. We then define $R$-ideals $\mathsf{r}^\alpha_A(G)$ of $G$
for all ordinals $\alpha$ by transfinite recursion: $\mathsf{r}^{\alpha+1}_A(G)$ is the
preimage of $\mathsf{r}^1_A(G/\mathsf{r}^\alpha_A(G))$ under the projection $G\to G/\mathsf{r}^\alpha_A(G)$,
and for a limit ordinal $\lambda$ we set $\mathsf{r}^\lambda_A(G) = \bigcup_{\alpha<\lambda}
\mathsf{r}^\alpha_A(G)$. For sufficiently large $\alpha$ this tower stabilizes; we denote
its stable value by $\mathsf{r}_A(G)=\mathsf{r}^\alpha_A(G)$. It is then easy to see that
\begin{equation}
	\label{eq:N_r}
	\mathsf{N}_A(G) = G/\mathsf{r}_A(G).
\end{equation}
In particular, $\mathsf{N}_A$ sends epimorphisms to epimorphisms. 

This construction can be applied to the category of groups $\Gr$, if $R=\bb{Z}$. 
\end{definition}

\begin{lemma}	\label{lemma:null-closed-under-extensions}
For an  $R$-group $A$, the class of $A$-null $R$-groups is closed under extensions.
\end{lemma}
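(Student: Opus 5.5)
We must show: if $1\to N\to G\to Q\to 1$ is an extension of $R$-groups, with $N$ and $Q$ both $A$-null, then $G$ is $A$-null. The precise meaning of an extension here is that $N$ is an $R$-ideal of the $R$-group $G$ and $Q=G/N$ carries the induced $R$-group structure, so that the projection $\pi:G\to Q$ is an $R$-homomorphism.

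The plan is to chase a morphism $\varphi$ through the extension. Let $\varphi\colon A\to G$ be an $R$-homomorphism.

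1. Compose with the projection. The map $\pi\circ\varphi\colon A\to Q$ is an $R$-homomorphism, being a composite of $R$-homomorphisms. Since $Q$ is $A$-null, $\pi\circ\varphi$ is trivial. Hence $\varphi(A)\subseteq \ker\pi = N$.

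2. Corestrict to $N$. Because $N$ is an $R$-subgroup of $G$, the power operations on $N$ are the restrictions of those on $G$. The corestriction $\varphi\colon A\to N$ is therefore an $R$-homomorphism. Since $N$ is $A$-null, this map is trivial, and so $\varphi$ is trivial.

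Consequently $\Hom_{\RGr}(A,G)$ is trivial, which is what we want.

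The only point requiring care is the meaning of "extension" in the category of $R$-groups. We take it to mean that $N$ is an $R$-ideal, so it is in particular an $R$-subgroup, and that $Q$ has the quotient structure of \cite[Prop.~5]{myasnikov_groups_1994}. With this reading the two steps above need nothing beyond the fact that composites and corestrictions of $R$-homomorphisms to $R$-subgroups are again $R$-homomorphisms.

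If extensions were understood more loosely, as a short exact sequence of $R$-homomorphisms $N\to G\to Q$, the same argument still works. The first map identifies $N$ with the kernel of $G\to Q$, which is an $R$-ideal, and an injective $R$-homomorphism is an isomorphism of $R$-groups onto its image. So there is no real obstacle: the argument is a two-line diagram chase.
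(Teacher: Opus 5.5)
Your proof is correct and matches the paper's argument: the composite $A\to G\to Q$ is trivial because $Q$ is $A$-null, so $\varphi$ factors through $N$, and it is trivial there because $N$ is $A$-null. Your remarks on what ``extension'' means, and on corestrictions to $R$-subgroups being $R$-homomorphisms, just make explicit what the paper leaves implicit.
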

\begin{proof}
Let $1\to N\to E\to Q\to 1$ be a short exact sequence of $R$-groups with $N$ and $Q$ both
$A$-null. For every $R$-homomorphism $f:A\to E$, the composite $A\to E\to Q$ is
trivial, so $f$ factors through $N$, where it is again trivial. Thus $E$ is
$A$-null.
\end{proof}

\begin{definition}[$R/1$-nullification of an $R$-group]
Consider the $R$-group $F_R(R/1)$ and set 
\begin{equation}
N_{R/1}=\mathsf{N}_{F_R(R/1)}, \qquad r_{R/1} = \mathsf{r}_{F_R(R/1)}.
\end{equation}
Then we obtain a functor
\begin{equation}
N_{R/1}  : \RGr \to \RGr,
\qquad
N_{R/1}(G) =G/r_{R/1}(G).
\end{equation}
For any $R$-group $G$, the map $G\to N_{R/1}(G)$ is the universal $R$-homomorphism among $R$-homomorphisms $G\to H$ to an $R$-group $H$ such that $U_R(H)$ is $R/1$-null. 
\end{definition}

\begin{theorem}
\label{thm:r-local-r-groups}
	For a commutative ring $R$ and a group $G$, the homomorphism $G\to L_R(G)$ induces a surjective $R$-homomorphism
        \[C_R(G) \to L_R(G) \]
        with kernel $r_{R/1}(C_R(G))$. In particular, there is an isomorphism of coaugmented functors
        \[L_R\cong U_R \circ N_{R/1} \circ C_R.\]
\end{theorem}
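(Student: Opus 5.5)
The approach is to show that $N_{R/1}(C_R(G))$ has the universal property of $L_R(G)$. Write $Q = N_{R/1}(C_R(G)) = C_R(G)/r_{R/1}(C_R(G))$, and let $\eta\colon G\to U_R(Q)$ be the composite of the unit $G\to C_R(G)$ with the projection. Everything else is a matter of chaining two adjunctions and matching the resulting object with the description of $R$-local groups in Proposition~\ref{prop:r-null}.

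First, I will check that $U_R(Q)$ is $R$-local. By construction $Q$ is an $R$-group, and its underlying group is $R/1$-null, which is exactly the universal property recorded in the definition of $N_{R/1}$. Proposition~\ref{prop:r-null} then gives that $U_R(Q)$ is $R$-local. One point needs care: $R/1$-nullness of $U_R(Q)$ must be genuinely equivalent to $F_R(R/1)$-nullness of $Q$ as an $R$-group. This holds because
\[
\Hom_{\RGr}(F_R(R/1),Q) \cong \Hom_{\mathrm{Set}}(R/1,Q)\supseteq \Hom(R/1,U_R(Q)),
\]
and I read the definition of $F_R(R/1)$ as the free $R$-group on the group $R/1$, i.e.\ $C_R(R/1)$. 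With that reading the adjunction gives a bijection $\Hom_{\RGr}(C_R(R/1),Q)\cong\Hom(R/1,U_R(Q))$, so the two nullity conditions agree. I will state this identification explicitly.

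Second, I will verify universality. Let $H$ be an $R$-local group and $f\colon G\to H$ a homomorphism.
\begin{itemize}
\item By Proposition~\ref{prop:R-ring_structure_on_a_local_group}, $H$ carries a unique $R$-group structure.
\item The adjunction $C_R\dashv U_R$ gives a unique $R$-homomorphism $\tilde f\colon C_R(G)\to H$ extending $f$.
\item Since $U_R(H)$ is $R/1$-null, the universal property of $N_{R/1}$ factors $\tilde f$ uniquely through $Q$ as an $R$-homomorphism.
\end{itemize}
For uniqueness among plain group homomorphisms $U_R(Q)\to H$ extending $f$, I use part (2) of Proposition~\ref{prop:R-ring_structure_on_a_local_group}: every group homomorphism from an $R$-group into $H$ is automatically an $R$-homomorphism. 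So group-level uniqueness reduces to the $R$-level uniqueness already established. Hence $\eta$ is the universal map to an $R$-local group, and $L_R(G)\cong U_R(Q)$ compatibly with the coaugmentations. Naturality in $G$ follows from the uniqueness in the universal properties.

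Finally, I will read off the statement about the kernel. The induced map $C_R(G)\to L_R(G)$ is identified with the projection $C_R(G)\to C_R(G)/r_{R/1}(C_R(G))$. It is therefore surjective with kernel $r_{R/1}(C_R(G))$, and it is an $R$-homomorphism, which again also follows from Proposition~\ref{prop:R-ring_structure_on_a_local_group}(2). The main obstacle is the first step: correctly interpreting $F_R(R/1)$ so that nullity in $\RGr$ matches $R/1$-nullity of the underlying group. The remaining steps are formal.
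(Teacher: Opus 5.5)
Your proposal is correct and follows essentially the same route as the paper: you show that $U_R(N_{R/1}(C_R(G)))$ is $R$-local via Proposition~\ref{prop:r-null}, and then chain $\Hom_{\Gr}(U_R(N_{R/1}(C_R(G))),X)\cong\Hom_{\RGr}(N_{R/1}(C_R(G)),\tilde X)\cong\Hom_{\RGr}(C_R(G),\tilde X)\cong\Hom_{\Gr}(G,X)$, using Proposition~\ref{prop:R-ring_structure_on_a_local_group}(2) for the first bijection and again for the $R$-linearity of the induced surjection. Your reading of $F_R(R/1)$ as $C_R(R/1)$ is the right one. It is exactly what makes the paper's stated universal property of $N_{R/1}$ (nullity of $U_R(H)$ with respect to the group $R/1$) hold, whereas the free $R$-group on the underlying set of $R/1$ would map nontrivially to every nontrivial $R$-group.
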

\begin{proof} By Proposition~\ref{prop:r-null}, $U_R(N_{R/1}(C_R(G)))$ is $R$-local. Let $X$ be an $R$-local group.
	Write $\tilde X$ for $X$ regarded as an object of $\RGr$. Combining Proposition~\ref{prop:R-ring_structure_on_a_local_group}, Proposition~\ref{prop:r-null} and the universal property of $N_{R/1}$, we obtain
	\begin{align}
		\Hom_{\Gr}(U_R(N_{R/1}(C_R(G))), X ) & \cong \Hom_{\RGr}(N_{R/1}(C_R(G)), \tilde X) \\
		&\cong \Hom_{\RGr}(C_R(G),\tilde X) \\
		&\cong \Hom_{\Gr}(G,X).
	\end{align}
	Hence $G\to U_R(N_{R/1}(C_R(G)))$ is the $R$-localization of $G$: $L_R(G)\cong U_R(N_{R/1}(C_R(G)))$. It follows that the map $G\to L_R(G)$ induces a surjective homomorphism $C_R(G)\to L_R(G)$ with kernel $r_{R/1}(C_R(G))$. By Proposition~\ref{prop:R-ring_structure_on_a_local_group}(2), it is a surjective $R$-homomorphism.
\end{proof}

\begin{corollary}
The localization functor $L_R$ sends epimorphisms to epimorphisms.
\end{corollary}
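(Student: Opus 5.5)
The plan is to read the corollary off from the factorization $L_R\cong U_R\circ N_{R/1}\circ C_R$ of Theorem~\ref{thm:r-local-r-groups}, checking that each of the three functors sends surjections to surjections. A subtle point: in $\Gr$ the epimorphisms are exactly the surjective homomorphisms, a classical fact. So it suffices to show that $L_R$ carries surjective homomorphisms to surjective homomorphisms.

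Let $\varphi:G\to H$ be a surjective homomorphism. By Remark~\ref{R-comp-surj}, $C_R(\varphi):C_R(G)\to C_R(H)$ is a surjective $R$-homomorphism. Next, by the explicit description $N_{R/1}(K)=K/r_{R/1}(K)$, the map $N_{R/1}(C_R(\varphi))$ is the map of quotients induced by $C_R(\varphi)$. The square formed by the two quotient maps, $C_R(\varphi)$, and $N_{R/1}(C_R(\varphi))$ commutes by naturality of the coaugmentation. Going around the square, the composite $C_R(G)\to C_R(H)\to N_{R/1}(C_R(H))$ is surjective. It equals $C_R(G)\to N_{R/1}(C_R(G))\to N_{R/1}(C_R(H))$, so the second map in this factorization is surjective. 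Equivalently, one can quote the statement in the definition of $\mathsf{N}_A$ that it sends epimorphisms to epimorphisms, where epimorphism here means surjection.

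Finally, $U_R$ does not change underlying maps, so $U_R N_{R/1} C_R(\varphi)$ is surjective. By the natural isomorphism of coaugmented functors in Theorem~\ref{thm:r-local-r-groups}, this map is identified with $L_R(\varphi)$, which is therefore surjective, hence an epimorphism.

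An even shorter route avoids the functorial factorization. The composite $G\to H\to L_R(H)$ equals $G\to L_R(G)\to L_R(H)$. By Theorem~\ref{thm:r-local-r-groups}, the image of $H$ generates $L_R(H)$ as an $R$-group, being a quotient of $C_R(H)$. The image of $L_R(\varphi)$ is an $R$-subgroup, since $L_R(\varphi)$ is an $R$-homomorphism by Proposition~\ref{prop:R-ring_structure_on_a_local_group}, and it contains the image of $H=\varphi(G)$. Hence this image is all of $L_R(H)$.

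The only point needing care is the identification of epimorphisms in $\Gr$ with surjections. That is standard, so I expect no real obstacle.
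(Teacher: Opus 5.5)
Your proposal is correct and follows the paper's own proof: you factor $L_R\cong U_R\circ N_{R/1}\circ C_R$ via Theorem~\ref{thm:r-local-r-groups} and check that each factor preserves surjections, and your remark that epimorphisms in $\Gr$ are exactly the surjections makes explicit a point the paper leaves implicit. Your shorter alternative is also valid: the image of $L_R(\varphi)$ is an $R$-subgroup of $L_R(H)$ that contains $\eta_H(H)$, and $\eta_H(H)$ generates $L_R(H)$ as an $R$-group, so the image is all of $L_R(H)$.
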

\begin{proof}
	This follows because $U_R$, $N_{R/1}$ and $C_R$ send surjective morphisms to surjective morphisms  (Remark~\ref{R-comp-surj}).
\end{proof}

\section{Polynomial functions}
\label{section:polynomials}

The goal of this section is to provide a general method for proving polynomial
identities in nilpotent groups (Theorem~\ref{theorem:spoly-inclusion}). We will
use it later to show that the extended Hall--Petresco identities hold in
nilpotent $R$-local groups. We begin by introducing the notion of a polynomial
function, following~\cite{leibman2002polynomial} closely.

\begin{definition}[Polynomial functions]
\label{def:degree}
    Let $A$ and $G$ be groups and let $\varphi : A \to G$ be a set function. For
    $a\in A$, we define the \emph{left} and \emph{right forward differences}
    $D^\ell_a\varphi, D^r_a\varphi : A\to G$ by
    \begin{equation}
    (D^\ell_a\varphi)(x) = \varphi(ax)\,\varphi(x)^{-1},
    \qquad
    (D^r_a\varphi)(x) = \varphi(x)^{-1}\varphi(xa).
    \end{equation}
    The function $\varphi$ is called \emph{left polynomial of left degree at most
    $d$} if
    \begin{equation} D^\ell_{a_1}D^\ell_{a_2}\cdots D^\ell_{a_{d+1}}\varphi \equiv 1
    \end{equation}
    for all $a_1,\dots,a_{d+1}\in A$. By convention, the constant function
    $\varphi \equiv 1$ is left polynomial of left degree $-\infty$. \emph{Right
    polynomial functions} and their right degree are defined similarly.
\end{definition}

\begin{remark}[Polynomial functions into nilpotent groups]
If $G$ is nilpotent, then a function $\varphi:A\to G$ is left polynomial if and
only if it is right polynomial~\cite[Prop.~3.16]{leibman2002polynomial}.
In this case we call $\varphi$ simply a \emph{polynomial function}, and we denote
the set of polynomial functions by
\begin{equation}
\Poly(A,G).
\end{equation}
Moreover, $\Poly(A,G)$ is a group under pointwise
multiplication~\cite[Th.~3.2]{leibman2002polynomial}. The left and
right degrees of a polynomial function into a nilpotent group may nevertheless
differ.
\end{remark}

\begin{remark}[Polynomial functions into abelian groups]
If $G$ is abelian and $A$ is any group, then a function $\varphi:A\to G$ has left
degree at most $d$ if and only if it has right degree at most $d$ (although
$D^\ell_a \varphi$ and $D^r_a \varphi$ need not
coincide)~\cite[Cor.~2.13]{leibman2002polynomial}. In this case we
call the common value simply the \emph{degree}. Moreover, the functions of degree
at most $d$ form a subgroup
of $\Poly(A,G)$~\cite[Lemma~2.1]{leibman2002polynomial}, which we
denote by
\begin{equation}
\Poly_d(A,G)\leq \Poly(A,G).
\end{equation}
\end{remark}

\begin{remark}[Composition of a polynomial function with homomorphisms]
\label{rem:composition:poly}
Let $A$ and $B$ be groups, let $G$ and $H$ be nilpotent groups, and let
$\varphi: B \to G$ be a polynomial function of left degree at most $d$. Then, for
any homomorphisms $f:A\to B$ and $f':G\to H$, the composite $f'\circ \varphi\circ
f$ is a polynomial function of left degree at most
$d$~\cite[Prop.~1.10, 1.11]{leibman2002polynomial}. In particular, $f$ and $f'$
induce a homomorphism
\begin{equation}
(f^*,f'_*) : \Poly(B, G) \to \Poly(A,H).
\end{equation}
\end{remark}

\begin{theorem}
\label{theorem:spoly-inclusion}
    Let $f: A \to B$ be a group homomorphism, and let $G$ be a nilpotent group
    such that the map
    \begin{equation}
    f^*:\Hom(B,G) \to \Hom(A,G)
    \end{equation}
     is injective. Then the homomorphism
    \begin{equation}
    f^*:\Poly(B, G) \to \Poly(A, G)
    \end{equation}
    is a monomorphism.
\end{theorem}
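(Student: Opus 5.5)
We need to show: if $\varphi\in\Poly(B,G)$ satisfies $\varphi\circ f\equiv 1$, then $\varphi\equiv 1$. Since $\Poly(B,G)$ is a group under pointwise multiplication and $f^*$ is a homomorphism (Remark~\ref{rem:composition:poly}), this is exactly injectivity. The plan is a double induction: on the nilpotency class of $G$, and inside each abelian layer on the degree of the polynomial.

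\textbf{Step 1 (abelian case, induction on degree).} Assume first that $G$ is abelian, and let $\varphi\in\Poly_d(B,G)$ with $\varphi\circ f\equiv 1$; we induct on $d$. For $d\le 0$, $\varphi$ is constant, and $\varphi(1)=\varphi(f(1))=1$. For $d\ge 1$ and $b\in B$, the function $D^\ell_b\varphi$ has degree at most $d-1$, but $(D^\ell_b\varphi)\circ f$ need not vanish, since $b$ need not lie in $f(A)$.

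\begin{itemize}
\item[(a)] For $a\in A$ we have $(D^\ell_{f(a)}\varphi)\circ f = D^\ell_a(\varphi\circ f)\equiv 1$. By induction, $D^\ell_{f(a)}\varphi\equiv 1$, so $\varphi(f(a)x)=\varphi(x)$ for all $x\in B$. Similarly (using right differences, which give the same degree bound in the abelian case) $\varphi(xf(a))=\varphi(x)$.
\item[(b)] Iterate this. Take $d-1$ differences along arbitrary $b_1,\dots,b_{d-1}\in B$ and one along $f(a)$. The function $\psi_{b_1,\dots,b_{d-1}}=D^\ell_{b_1}\cdots D^\ell_{b_{d-1}}\varphi$ has degree $\le 1$. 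A degree-$\le 1$ map into an abelian group is a homomorphism times a constant, so $\psi-\psi(1)$ is a homomorphism $B\to G$. One then checks that its restriction along $f$ is trivial, and the hypothesis on $\Hom(B,G)\to\Hom(A,G)$ kills it.
\end{itemize}

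More precisely, I would organize the induction so that the top-degree part is handled this way: the symmetric multilinear map $(b_1,\dots,b_d)\mapsto D^\ell_{b_1}\cdots D^\ell_{b_d}\varphi$ (constant in $x$). This map is additive in each variable, factors through $B^{ab}$, and is a homomorphism in each slot. Fixing all slots but one gives an element of $\Hom(B,G)$. Showing that it vanishes on $f(A)$ in that slot, via symmetry of the multilinear form and step (a), lets injectivity kill one slot at a time. Once the top form vanishes, $\varphi$ has degree $\le d-1$ and induction applies.

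\textbf{Step 2 (nilpotent case, induction on class).} Let $G$ have class $c$ and $Z=Z(G)$. First check that the hypothesis passes to $G/Z$ and to $Z$.

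\begin{itemize}
\item For $Z$: this is immediate, since $\Hom(B,Z)\subseteq\Hom(B,G)$.
\item For $G/Z$: this is the delicate point. Try a bootstrapping argument. Given $\alpha,\beta\colon B\to G/Z$ agreeing on $f(A)$, lift them pointwise to a function $B\to G$. The map $b\mapsto \tilde\alpha(b)^{-1}\tilde\beta(b)$ is then polynomial into $G$ modulo corrections in $Z$, and the abelian case in $Z$ would be applied to a suitable polynomial built from it.
\end{itemize}

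If this does not work directly, I would instead run the induction on the lower central series quotients $\gamma_i G/\gamma_{i+1}G$. In that version, the needed injectivity for the maps $\Hom(B,\gamma_iG/\gamma_{i+1}G)$ is deduced from that of $\Hom(B^{ab}\otimes\cdots,\ -)$ using multilinearity of commutators.

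Given the hypothesis on all layers, take $\varphi$ with $\varphi\circ f\equiv1$. Its image $\bar\varphi$ in $\Poly(B,G/Z)$ dies under $f^*$, so by induction $\bar\varphi\equiv 1$. Hence $\varphi$ takes values in $Z$ and is polynomial into $Z$ (using Leibman's results on differences), and Step 1 gives $\varphi\equiv1$.

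\textbf{Main obstacle.} I expect the main obstacle to be the descent of the injectivity hypothesis from $G$ to the quotient layers, in the abelian groups $Z$ or $\gamma_iG/\gamma_{i+1}G$ versus $G/Z$. A second difficulty is that the vanishing of the top multilinear form in Step 1 only uses homomorphisms into abelian subquotients, which are not obviously covered by the hypothesis on $\Hom(B,G)$. I would first try to make the induction use only $\Hom(B,Z(G))$-type maps, which are genuinely subsets of $\Hom(B,G)$. To do this, I would filter $G$ by the upper central series and arrange that at each stage the polynomial already takes values in the relevant center.
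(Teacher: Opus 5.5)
Your Step~1 is sound, and it takes a different route from the paper's abelian lemma. The paper forms $\xi(x,y)=\varphi(y)^{-1}\varphi(x)^{-1}\varphi(xy)$, shows it has degree at most $d$ in each variable and vanishes identically, and concludes that $\varphi$ is a homomorphism killed by $f^*$. You instead show that $\varphi$ drops in degree. Your bullet (b) works as written, given the right-difference version of (a): left differences commute with right translations, so $\psi(f(a))=\psi(1)$, and $\psi-\psi(1)\in\Hom(B,G)$ is killed by $f^*$. Drop the claim that the top form is symmetric, though. For non-abelian $B$ it factors through $(B^{ab})^{\otimes d}$ but need not be symmetric. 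For example, take $B$ free on $a,b$, $G=\bb{Z}$, and $\varphi(g)$ the coefficient of $XY$ in the Magnus image $a\mapsto 1+X$, $b\mapsto 1+Y$. Then $D^\ell_aD^\ell_b\varphi\equiv 0$ while $D^\ell_bD^\ell_a\varphi\equiv 1$. Symmetry is not needed anyway.

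The genuine gap is in Step~2. Your induction on class passes through the quotient $G/Z(G)$, so it needs $f^*:\Hom(B,G/Z(G))\to\Hom(A,G/Z(G))$ to be injective. That is not part of the hypothesis. The hypothesis is inherited by subgroups $H\le G$, since $\Hom(B,H)\subseteq\Hom(B,G)$, but nothing you wrote transfers it to a quotient.

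The bootstrapping sketch does not close this gap. The map $b\mapsto\tilde\alpha(b)^{-1}\tilde\beta(b)$ depends on arbitrary lifts, is not polynomial, and does not take values in $Z(G)$; showing that it does is exactly the claim. The lower-central-series variant has the same defect, because $\gamma_iG/\gamma_{i+1}G$ are subquotients and the hypothesis says nothing about maps into them.

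The missing idea, which is what the paper uses, is to take commutators with a fixed element. For $g\in G$, the map $\psi_g(x)=[\varphi(x),g]$ is a polynomial map $B\to[G,G]$ with $\psi_g\circ f\equiv 1$. Since $[G,G]$ is a \emph{subgroup} of smaller class, it satisfies the hypothesis automatically, and induction gives $\psi_g\equiv 1$ for all $g$. Hence $\varphi(B)\subseteq Z(G)$, and the abelian case applied to the subgroup $Z(G)$ finishes.

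Equivalently, the maps $x\mapsto[\varphi(x),g_1,\dots,g_{k-1}]$ take values in $Z(G)$ once $\varphi(B)\subseteq Z_k(G)$. So they push $\varphi$ down the upper central series using only the abelian case; this is the mechanism your last paragraph was looking for. The same trick applied to $\bar h\mapsto[h,g]$, a well-defined polynomial map $G/Z(G)\to[G,G]$, would also prove the descent to $G/Z(G)$ that you wanted. Once you have it, though, the detour through the quotient is unnecessary.
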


\begin{lemma}
\label{lemma:poly_abelian}
Theorem~\ref{theorem:spoly-inclusion} holds for every abelian group $G$.
\end{lemma}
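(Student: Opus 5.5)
We must show: if $G$ is abelian and $f^*:\Hom(B,G)\to\Hom(A,G)$ is injective, then $f^*:\Poly(B,G)\to\Poly(A,G)$ is injective. Since $\Poly(B,G)$ is a group under pointwise multiplication and $f^*$ is a homomorphism, it suffices to show that its kernel is trivial. So let $\varphi\in\Poly(B,G)$ with $\varphi\circ f\equiv 1$; we prove $\varphi\equiv 1$.

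We argue by induction on the degree $d$ of $\varphi$.

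\textbf{Base cases.} If $d=-\infty$ there is nothing to prove. If $d=0$, then $\varphi(bx)=\varphi(x)$ for all $b,x\in B$, so $\varphi$ is constant. Its value is $\varphi(f(1))=1$, hence $\varphi\equiv 1$.

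\textbf{First reduction.} In general, first replace $\varphi$ by $\varphi\cdot\varphi(1)^{-1}$, which is legitimate because constants have degree $0$; since $\varphi(1)=\varphi(f(1))=1$, this changes nothing. So we may assume $\varphi(1)=1$.

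\textbf{Inductive step.} For $d\ge 1$, fix $a\in A$ and consider the difference $\psi_a=D^\ell_{f(a)}\varphi$. It has degree at most $d-1$, and
\[
\psi_a(f(x))=\varphi(f(ax))\,\varphi(f(x))^{-1}=1 .
\]
By induction $\psi_a\equiv 1$, that is, $\varphi(f(a)y)=\varphi(y)$ for all $y\in B$. Thus $\varphi$ is invariant under left translation by the subgroup $f(A)$, but this alone does not finish the argument. The real task is to pass from invariance along $f(A)$ to triviality on all of $B$, and the only available input is injectivity on homomorphisms. I therefore want to apply the inductive hypothesis to differences along arbitrary $b\in B$, not only along $f(b')$.

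\textbf{Main step.} Consider instead $\chi_b=D^\ell_b\varphi$ for $b\in B$. By the invariance just proved, $\chi_b(f(a)y)=\varphi(bf(a)y)\varphi(y)^{-1}$. The natural object to study is the top-degree part. Take the $d$-fold difference
\[
\Phi(b_1,\dots,b_d)=D^\ell_{b_1}\cdots D^\ell_{b_d}\varphi ,
\]
which is constant in $x$ because $\varphi$ has degree at most $d$. The map $\Phi$ is multiadditive: for abelian $G$, a degree-$\le d$ function has $d$-th differences that are homomorphisms in each variable (Leibman's calculus, cf.\ \cite[\S2]{leibman2002polynomial}). Fixing all arguments but one, each slot $b_i\mapsto\Phi(\dots,b_i,\dots)$ lies in $\Hom(B,G)$. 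Its pullback along $f$ is $\Phi(\dots,f(a),\dots)$, which is trivial because differencing along $f(a)$ kills $\varphi$ by the invariance above. Since $D$'s commute for abelian $G$, we may move $D_{f(a)}$ to the front. Injectivity of $f^*$ on $\Hom(B,G)$ then gives $\Phi\equiv 1$ in that slot for all values of the other arguments. Hence $\varphi$ has degree at most $d-1$, and induction concludes.

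\textbf{Expected obstacle.} The key step is the claim that the $d$-th differences of a degree-$d$ function into an abelian group are multi-homomorphisms in $b_1,\dots,b_d$, symmetric, and commuting. I expect to check this directly from $D_{bc}\varphi = (D_b\varphi)(c\,\cdot)\cdot D_c\varphi$, using that higher differences vanish so that the translation $(c\,\cdot)$ does not matter. This is where care is needed, since $B$ is noncommutative and $D^\ell$ and $D^r$ differ.

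With this structure the argument shows that one variable at a time is killed. Note that the degree-lowering only needs the single slot containing $f(a)$ to be handled, and the other slots range freely. Finally, the argument does not even need the initial invariance step separately, since $\Phi(f(a),\dots)=D_{f(a)}(\ldots)$ is computed from $D_{f(a)}\varphi$, whose triviality follows by the induction as shown.
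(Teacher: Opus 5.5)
\textbf{The gap.} Your main step claims that the operators $D^\ell_a$ commute ``for abelian $G$'', and that $\Phi$ is symmetric. This is false: whether left differences commute depends on $B$, not on $G$. For abelian $G$, $(D^\ell_aD^\ell_b\psi)(x)$ and $(D^\ell_bD^\ell_a\psi)(x)$ differ by the factor $\psi(bax)\psi(abx)^{-1}$, and this factor does not vanish on top-degree differences.

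For a concrete counterexample, let $B$ be free on $b,c$ and let $G$ be the additive group $\mathbb{Z}$. Let $\varphi(y)$ be the coefficient of $X_bX_c$ in the Magnus expansion of $y$, where $b\mapsto 1+X_b$ and $c\mapsto 1+X_c$. Then:
\begin{itemize}
\item $\varphi$ has degree $2$;
\item $D^\ell_c\varphi\equiv 0$, because every monomial of $X_c\cdot(\cdots)$ begins with $X_c$;
\item $D^\ell_cD^\ell_b\varphi$ is the constant $1\neq 0$.
\end{itemize}
So $\Phi(c,b)\neq\Phi(b,c)$. Take $f\colon\mathbb{Z}\to B$, $1\mapsto c$. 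The invariance $D^\ell_{f(a)}\varphi\equiv 0$ that you proved holds, yet the pullback along $f$ of the outermost slot $b_1\mapsto\Phi(b_1,b)$ is nontrivial. Your last paragraph puts $f(a)$ exactly in this outermost slot, so the step ``$\Phi(f(a),\dots)$ is computed from $D_{f(a)}\varphi$'' fails as written.

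\textbf{The repair.} As you observe, only one slot needs to be handled, so use the innermost one. Fix $b_1,\dots,b_{d-1}$ and consider $b\mapsto D^\ell_{b_1}\cdots D^\ell_{b_{d-1}}D^\ell_b\varphi$.
\begin{itemize}
\item Its pullback along $f$ is trivial directly from $D^\ell_{f(a)}\varphi\equiv 1$; no commutation is needed.
\item It is a homomorphism $B\to G$. For abelian $G$ one has $D^\ell_{bc}\varphi=D^\ell_b\varphi\cdot D^\ell_cD^\ell_b\varphi\cdot D^\ell_c\varphi$. The operator $D^\ell_{b_1}\cdots D^\ell_{b_{d-1}}$ is multiplicative on functions into an abelian group, and applying it turns the middle factor into a $(d+1)$-fold difference, which is trivial.
\end{itemize}
Injectivity of $f^*$ on $\Hom(B,G)$ then kills all $d$-fold differences. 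Hence $\varphi$ has degree at most $d-1$, and your inductive hypothesis applies to $\varphi$ itself.

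\textbf{Comparison with the paper.} With this change your argument is a valid route that differs from the paper's. The paper studies $\xi(x,y)=\varphi(y)^{-1}\varphi(x)^{-1}\varphi(xy)$ and applies the inductive hypothesis once in each variable. That step uses both $D^\ell$ and $D^r$, and therefore relies on left and right degrees agreeing for abelian targets. Having shown $\varphi$ is a homomorphism, the paper applies injectivity on $\Hom(B,G)$ to $\varphi$ itself. Your version uses only left differences, and applies injectivity to a top-degree difference to lower the degree.
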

\begin{proof}
It suffices to prove that the homomorphism $\Poly_d(B,G)\to \Poly_d(A,G)$ is a
monomorphism for every $d\geq 0$. We argue by induction on $d$. The case $d=0$ is
clear.

Assume that $\Poly_d(B,G)\to \Poly_d(A,G)$ is a monomorphism; we show that
$\Poly_{d+1}(B,G)\to \Poly_{d+1}(A,G)$ is a monomorphism, that is, that its kernel
is trivial. Let $\varphi\in \Poly_{d+1}(B,G)$ satisfy $\varphi\circ f \equiv 1$,
and consider the function $\xi:B\times B\to G$ defined by
\begin{equation}
\xi(x,y)=\varphi(y)^{-1}\varphi(x)^{-1}\varphi(xy).
\end{equation}

We claim that $\xi$ has degree at most $d$ in each variable. For fixed $x_0 \in
B$,
\begin{align}
\xi(x_0,y) &=\varphi(x_0)^{-1} \varphi(y)^{-1}\varphi(x_0 y)\\
&= \varphi(x_0)^{-1}\, (D^\ell_{x_0}\varphi)(y),
\end{align}
so $\xi(x_0,-)$ has degree at most $d$, being the product of the constant function
$\varphi(x_0)^{-1}$ and the function $D^\ell_{x_0}\varphi$, both of degree at most
$d$. For fixed $y_0\in B$,
\begin{equation}
\xi(x,y_0) = \varphi(y_0)^{-1}\, (D^r_{y_0}\varphi)(x),
\end{equation}
and the same argument shows that $\xi(-,y_0)$ has degree at most $d$.

Next we show that $\xi\equiv 1$. If $x_0,y\in \Im(f)$, then $\varphi\circ f \equiv
1$ gives $\xi(x_0,y)=1$. Fixing $x_0\in \Im(f)$ and using the inductive hypothesis
together with $\xi(x_0,-)\in \Poly_d(B,G)$, we conclude that $\xi(x_0,y)=1$ for all
$y\in B$. Similarly, fixing $y_0\in B$ shows that $\xi(-,y_0)\equiv 1$. Therefore
$\xi\equiv 1$. Since $\varphi(1)=\varphi(f(1))=1$ and $\xi\equiv 1$, the function
$\varphi$ is a homomorphism, and the claim follows from the injectivity of
$f^*:\Hom(B,G)\to\Hom(A,G)$.
\end{proof}

\begin{proof}[Proof of Theorem~\ref{theorem:spoly-inclusion}]
We argue by induction on the nilpotency class of $G$. If $G$ is abelian, the
statement is Lemma~\ref{lemma:poly_abelian}. Assume that $\Poly(B,H)\to
\Poly(A,H)$ is a monomorphism for every nilpotent group $H$ of class $c$; we show
that $\Poly(B,G)\to \Poly(A,G)$ is a monomorphism for every nilpotent group $G$ of
class $c+1$.

We must show that the kernel of $\Poly(B,G)\to \Poly(A,G)$ is trivial. Let
$\varphi:B\to G$ be a polynomial function with $\varphi\circ f \equiv 1$. For fixed
$g\in G$, consider the polynomial function $\psi:B\to G$ given by
\begin{equation}
\psi(x) = [\varphi(x),g].
\end{equation}
Since $\psi(B)\subseteq [G,G]$, the function $\psi$ corestricts to a polynomial
function $\psi':B\to [G,G]$ with $\psi' \circ f \equiv 1$. As $[G,G]$ is nilpotent
of class at most $c$, the inductive hypothesis gives $\psi'\equiv 1$, and hence
$\psi\equiv 1$ for every $g\in G$. It follows that $\varphi(B)\subseteq Z(G)$,
where $Z(G)$ denotes the center of $G$. Thus $\varphi$ corestricts to a polynomial
function $\varphi':B\to Z(G)$ with $\varphi'\circ f\equiv 1$. By
Lemma~\ref{lemma:poly_abelian}, $\varphi'\equiv 1$, and therefore $\varphi\equiv
1$.
\end{proof}

\begin{corollary}
\label{cor:spoly-partial}
    If $G$ is a nilpotent $R$-local group, then the map
    \[ \Poly(R, G) \to \Poly(\bb{Z}, G)\]
    is injective.
\end{corollary}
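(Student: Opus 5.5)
This is a direct application of Theorem~\ref{theorem:spoly-inclusion} to the ring homomorphism $f:\mathbb{Z}\to R$, regarded as a homomorphism of additive groups $A=\mathbb{Z}$, $B=R^+$. The plan is to check the single hypothesis of that theorem, namely that
\begin{equation}
f^*:\Hom(R,G)\to\Hom(\mathbb{Z},G)
\end{equation}
is injective whenever $G$ is $R$-local. Nilpotency of $G$ is assumed, so $\Poly(R,G)$ and $\Poly(\mathbb{Z},G)$ are groups and the theorem applies verbatim.

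The hypothesis is essentially the definition of $R$-locality. Under the identification $\Hom(\mathbb{Z},G)\cong G$, $\psi\mapsto\psi(1)$, the map $f^*$ becomes the evaluation map $\Hom(R,G)\to G$, $\varphi\mapsto\varphi(1_R)$. The definition of an $R$-local group says exactly that this evaluation map is a bijection, so in particular it is injective.

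Theorem~\ref{theorem:spoly-inclusion} then gives that $f^*:\Poly(R,G)\to\Poly(\mathbb{Z},G)$, which is restriction along $\mathbb{Z}\to R$, is a monomorphism. That is the claim.

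There is no real obstacle here. The only care needed is notational: the map in the corollary must be understood as precomposition with the additive homomorphism $\mathbb{Z}\to R$, so that it matches the map $f^*$ of Remark~\ref{rem:composition:poly}. Note also that only injectivity of evaluation is used, not surjectivity, so the corollary in fact holds for any nilpotent group $G$ for which homomorphisms $R\to G$ are determined by their value at $1$.
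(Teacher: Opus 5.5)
Your proposal is correct and is exactly the paper's argument: the corollary is stated without proof right after Theorem~\ref{theorem:spoly-inclusion}, as that theorem applied to $f:\mathbb{Z}\to R$, and its hypothesis is the injectivity half of the definition of $R$-locality. Your closing observation is also right: only injectivity of $\varphi\mapsto\varphi(1)$ is used.
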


\section{\texorpdfstring{$\lambda$}{lambda}-rings and binomial rings}
In this section we recall the definitions of a $\lambda$-ring and a normed
pre-$\lambda$-ring, and their relationship to binomial rings.

\begin{definition}[Pre-$\lambda$-ring]
A \emph{pre-$\lambda$-ring} is a commutative ring $R$ equipped with functions
$\lambda^n: R \to R$, $n \geq 0$, called \emph{$\lambda$-operations}, such that
for all $r, s \in R$ the following axioms hold:
	\begin{itemize}
		\item[($\lambda1$)] \label{lambda-zero-axiom} $\lambda^0(r) = 1$,
		\item[($\lambda2$)] \label{lambda-one-axiom} $\lambda^1(r) = r$,
		\item[($\lambda3$)] \label{lambda-sum-axiom} $\lambda^n (r + s) = \sum_{i + j = n} \lambda^i(r)\, \lambda^j(s)$.
	\end{itemize}
\end{definition}

\begin{notation}
For a commutative ring $R$, we write
\begin{equation}
W(R)=(1+tR[[t]])^\times
\end{equation}
for the group, under multiplication, of power series whose constant term is $1$.
If $R$ is a pre-$\lambda$-ring, then each $x\in R$ determines a power series
$\lambda_t(x)=\sum_{n\geq 0} \lambda^n(x)\,t^n$, and ($\lambda3$) is equivalent to
the statement that
\begin{equation}
 \lambda_t:R^+\to W(R)
\end{equation}
is a group homomorphism.
\end{notation}

\begin{proposition}
\label{prop:lambda_is_poly}
Let $R$ be a pre-$\lambda$-ring. Then, for every $n\geq 1$, the map
$\lambda^n : R \to R$ is a polynomial function of degree at most $n$.
\end{proposition}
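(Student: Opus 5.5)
We argue by induction on $n$, using only axiom ($\lambda3$). Since $R$ is abelian, left and right differences coincide: $(D_a\varphi)(x)=\varphi(a+x)-\varphi(x)$, written additively. The degree-$\le d$ functions form the subgroup $\Poly_d(R,R)$. So it suffices to show that $D_a\lambda^n$ has degree at most $n-1$ for every $a\in R$. Indeed, once this holds, any $n+1$ iterated differences $D_{a_1}\cdots D_{a_{n+1}}\lambda^n$ vanish: apply $D_{a_{n+1}}$ first, land in degree $\le n-1$, and then the remaining $n$ differences kill it.

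For $n=1$ this is immediate. By ($\lambda2$), $\lambda^1=\mathrm{id}$, and $(D_a\lambda^1)(x)=a$ is constant, so it has degree at most $0$. For the inductive step, ($\lambda3$) gives
\begin{equation}
(D_a\lambda^n)(x)=\lambda^n(a+x)-\lambda^n(x)=\sum_{i=1}^{n}\lambda^i(a)\,\lambda^{n-i}(x),
\end{equation}
using $\lambda^0(a)=1$ from ($\lambda1$) to cancel the $i=0$ term against $\lambda^n(x)$.

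Each summand is the constant $\lambda^i(a)$ times $\lambda^{n-i}$, with $n-i\le n-1$. We need two small facts.
\begin{itemize}
\item[(a)] Multiplication by a fixed ring element $c$ is an endomorphism of $R^+$, so it preserves degree $\le d$. This follows from Remark~\ref{rem:composition:poly}, or directly because $D_a(c\varphi)=cD_a\varphi$.
\item[(b)] $\lambda^0\equiv 1$ is constant, so it has degree $\le 0$.
\end{itemize}
By the induction hypothesis, $\lambda^{n-i}$ has degree $\le n-i\le n-1$ for $1\le i\le n-1$, and by (b) it has degree $\le 0$ for $i=n$. Hence every summand lies in $\Poly_{n-1}(R,R)$. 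Since $\Poly_{n-1}(R,R)$ is a subgroup, the sum $D_a\lambda^n$ lies in it too, which completes the induction. Stated more cleanly, we prove by strong induction that $\lambda^m\in\Poly_m(R,R)$ for all $m\ge 0$.

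There is no real obstacle. The only point needing care is the bookkeeping of degrees: under the paper's convention, "degree $\le d$" means that $d+1$ iterated differences vanish. Reducing the claim to "$D_a\varphi$ has degree $\le d-1$ for all $a$" requires that this holds for every choice of the first difference, which is what the formula above provides uniformly in $a$.
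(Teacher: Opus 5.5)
Your proposal is correct and follows essentially the same route as the paper: induction on $n$, the difference formula $(D_a\lambda^n)(x)=\sum_{i=1}^{n}\lambda^i(a)\,\lambda^{n-i}(x)$ obtained from ($\lambda1$) and ($\lambda3$), and the observations that multiplying by a constant and taking sums preserve degree at most $n-1$. The differences are cosmetic: you index the sum the other way, and you spell out why degree at most $n-1$ of $D_a\lambda^n$, for every $a$, gives degree at most $n$ for $\lambda^n$.
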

\begin{proof}
We argue by induction on $n$. The base cases $n=0$ and $n=1$ follow from
($\lambda1$) and ($\lambda2$): $\lambda^0$ is constant and $\lambda^1=\mathrm{id}$
is a homomorphism, hence of degree at most $1$. For the inductive step, axioms
($\lambda1$) and ($\lambda3$) give, for fixed $s$,
\begin{equation}
(D_s \lambda^n)(r) = \sum_{i = 0}^{n-1} \lambda^{n-i}(s)\, \lambda^{i}(r).
\end{equation}
By the inductive hypothesis $\lambda^i$ has degree at most $i$, and since
$\lambda^{n-i}(s)$ is a constant, the map $r\mapsto \lambda^{n-i}(s)\, \lambda^{i}(r)$
also has degree at most $i$. Hence $D_s \lambda^n$ has degree at most $n-1$, and
therefore $\lambda^n$ has degree at most $n$.
\end{proof}

\begin{definition}[Normed pre-$\lambda$-ring]
A pre-$\lambda$-ring is called \emph{normed} if it satisfies the following axiom:
\begin{itemize}
    \item[($\lambda4$)] $\lambda^n(1) = 0$ for $n \geq 2$.
\end{itemize}
\end{definition}

\begin{definition}[$\lambda$-ring]
A normed pre-$\lambda$-ring is called a \emph{$\lambda$-ring} if it satisfies the
following axioms:
\begin{itemize}
\item[($\lambda5$)] \label{lambda-product-axiom} $\lambda^n(rs) = P_n (\lambda^1(r), \dotsc, \lambda^n(r);\, \lambda^1(s), \dotsc, \lambda^n(s))$,
\item[($\lambda6$)] \label{lambda-composition-axiom} $\lambda^n(\lambda^m(r)) = P_{n,m} (\lambda^1(r), \dotsc, \lambda^{nm}(r))$,
\end{itemize}
where $P_n$ and $P_{n,m}$ are universal polynomials with integer coefficients
(see~\cite[Examples~1.7 and~1.9]{lambda-rings}).
\end{definition}

\begin{definition}[Binomial ring]
A \emph{binomial ring} is a $\bb{Z}$-torsion-free ring $R$ such that the binomial
coefficient
\begin{equation}
\binom{r}{n} = \frac{r(r-1)\cdots (r-(n-1))}{n!} \in R \otimes \bb{Q}
\end{equation}
lies in $R$ for all $r\in R$ and all $n\geq 1$.
\end{definition}

\begin{definition}[Adams operations]
Let $R$ be a $\lambda$-ring. For every $r\in R$, the power series
$\lambda_t(r)\in R[[t]]$ is invertible, since its constant term is
$\lambda^0(r)=1$. Consider the power series $\psi_{t}(r)\in R[[t]]$ defined by
\begin{equation}
\psi_{-t}(r) = (-t)\, \lambda_t(r)'\, \lambda_t(r)^{-1},
\end{equation}
where $\lambda_t(r)'$ denotes the derivative with respect to $t$. The coefficients
of $\psi_t(r)$ are denoted by $\psi^n(r)\in R$, and the maps
\begin{equation}
\psi^n : R\to R, \quad n\geq 0,
\end{equation}
are called the \emph{Adams operations}. The Adams operations are ring
homomorphisms~\cite[Th.~3.6]{lambda-rings}.
\end{definition}

\begin{theorem}[{\cite[Th.~5.3]{lambda-rings}}]
\label{th:binomial-lambda}
A binomial ring carries a unique $\lambda$-ring structure in which all Adams
operations are the identity. Conversely, a $\lambda$-ring in which all Adams
operations are the identity is a binomial ring. In either case the
$\lambda$-operations are given by $\lambda^n(r)=\binom{r}{n}$.
\end{theorem}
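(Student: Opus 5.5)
The plan is to treat the two directions separately. Everything rests on the observation that, when all Adams operations are the identity, the defining relation for $\psi_t$ becomes a first-order differential equation for $\lambda_t(r)$. That equation pins the $\lambda$-operations down to binomial coefficients.

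\emph{From binomial to $\lambda$-ring.} For a binomial ring $R$, set $\lambda^n(r)=\binom{r}{n}$.
\begin{itemize}
\item Axioms ($\lambda1$), ($\lambda2$) and ($\lambda4$) are immediate.
\item Axiom ($\lambda3$) is the Vandermonde identity $\binom{x+y}{n}=\sum_{i+j=n}\binom{x}{i}\binom{y}{j}$ in $\bb{Q}[x,y]$. Specializing into $R\otimes\bb{Q}$, and using that $R\to R\otimes\bb{Q}$ is injective because $R$ is torsion-free, gives it in $R$.
\item Axioms ($\lambda5$) and ($\lambda6$) follow by the same principle. Both sides are polynomials with rational coefficients in $r$ (and $s$). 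They agree at all nonnegative integers, because $\bb{Z}$ with $\lambda^n(m)=\binom{m}{n}$ is the standard $\lambda$-ring, being the $\lambda$-ring of the trivial rank-$m$ representations. Hence they agree as polynomials, and therefore in $R$.
\end{itemize}
For the Adams operations, write formally $\lambda_t(r)=(1+t)^r$. Then $\lambda_t(r)'\lambda_t(r)^{-1}=r/(1+t)$, so $\psi_{-t}(r)=-tr/(1+t)$, and comparing coefficients gives $\psi^n(r)=r$ for $n\ge1$. This identity can be checked as a polynomial identity in the same way as above.

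\emph{Uniqueness, and the $\lambda$-operations in the converse.} Suppose $R$ is any $\lambda$-ring with all $\psi^n=\mathrm{id}$. Multiplying the definition by $\lambda_t(r)$ gives
\begin{equation}
(1+t)\,\lambda_t(r)' = r\,\lambda_t(r).
\end{equation}
Comparing coefficients of $t^n$ yields the recursion
\begin{equation}
(n+1)\lambda^{n+1}(r)=(r-n)\lambda^n(r),
\end{equation}
hence
\begin{equation}
n!\,\lambda^n(r)=r(r-1)\cdots(r-n+1).
\end{equation}
Once $R$ is known to be torsion-free, three things follow. The $\lambda$-operations are forced to be $\binom{r}{n}$, which gives uniqueness in the first direction. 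The binomial coefficients $\binom{r}{n}=\lambda^n(r)$ lie in $R$. And $R$ is therefore binomial.

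\emph{Main obstacle: torsion-freeness in the converse.} I expect this step to be the hardest, because it is not a formal consequence of the recursion. Suppose $py=0$ with $p$ prime; the goal is $y=0$. First I would use ($\lambda3$) to get $\lambda_t(y)^p=\lambda_t(py)=1$. Second, I would use the standard congruence $\psi^p(x)\equiv x^p \pmod{pR}$, valid in any $\lambda$-ring by Newton's formula, together with $\psi^p(y)=y$. Third, I would combine these with the explicit form of $\lambda^n(y)$ coming from the recursion (for instance $\lambda^p(y)$, where the factor $p$ on the left interacts with $py=0$), and with ($\lambda5$) applied to $y\cdot p$. The aim is to derive $y\in p^kR$ for all $k$ together with $y=y^p=\dots$, and then to conclude $y=0$. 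If this direct route stalls, I would instead appeal to the Wilkerson-type criterion relating Adams operations and Frobenius lifts.
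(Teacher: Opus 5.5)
\textbf{Verdict: there is a genuine gap, in the torsion-freeness step of the converse.}

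\textbf{What is correct.} Your forward direction works: you reduce ($\lambda3$), ($\lambda5$), ($\lambda6$) and $\psi^n=\mathrm{id}$ to rational polynomial identities that hold on nonnegative integers, then use $R\hookrightarrow R\otimes\bb{Q}$. Your derivation of $(n+1)\lambda^{n+1}(r)=(r-n)\lambda^n(r)$ from $\psi^n=\mathrm{id}$ is also correct. The paper gives no proof of this statement; it only cites \cite{lambda-rings}.

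\textbf{The gap.} It is the step you flagged yourself: you never prove that $R$ is torsion-free. Without that, the recursion only determines $n!\,\lambda^n(r)$. So you get neither that $R$ is binomial nor that $\lambda^n(r)=\binom{r}{n}$. The route you outline does not close this:
\begin{itemize}
\item From $\psi^p(y)=y$ and $\psi^p(y)\equiv y^p \pmod{pR}$ you only get $y\equiv y^p\pmod{pR}$, not $y=y^p$.
\item The facts $y\equiv y^p\pmod{pR}$, $y\in\bigcap_k p^kR$ and $py=0$ together still allow $y\neq0$ in a general ring. Take the square-zero extension $\bb{Z}\oplus\bb{Z}[1/p]/\bb{Z}$ and $y=1/p$.
\item Wilkerson's criterion assumes torsion-freeness, so it cannot supply it.
\end{itemize}

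\textbf{The missing idea.} You need an exact identity rather than a congruence: in every (pre-)$\lambda$-ring, $\psi^p$ annihilates $p$-torsion. Let $N_p$ be the Newton polynomial, so that $\psi^p=N_p(\lambda^1,\dots,\lambda^p)$. Put $\theta_p=(N_p(e_1,\dots,e_p)-e_1^p)/p\in\bb{Z}[e_1,\dots,e_p]$. Then
\[
\psi^p(y)=\theta_p\bigl(\lambda^1(py),\dots,\lambda^p(py)\bigr)+p^{p-1}y^p .
\]
This holds in any pre-$\lambda$-ring, for two reasons.
\begin{itemize}
\item By ($\lambda3$), the $\lambda^i(py)$ are the coefficients of $\lambda_t(y)^p$. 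So both sides are fixed integer polynomials in the $\lambda^i(y)$.
\item The identity holds in $\bb{Z}[e_1,e_2,\dots]$. Put $E(t)=1+\sum_i e_it^i$. The logarithmic derivative of $E(t)^p$ is $p$ times that of $E(t)$, so $N_p$ of the coefficients of $E(t)^p$ equals $pN_p(e)$, and the linear coefficient is $pe_1$.
\end{itemize}
Now suppose $py=0$. Then $\lambda_t(py)=1$, so the first term is $\theta_p(0,\dots,0)=0$. Also $p^{p-1}y^p=0$. Hence $y=\psi^p(y)=0$. Every nonzero torsion element has a nonzero multiple of prime order, so $R$ is torsion-free, and your recursion then finishes the proof.

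\textbf{The same argument with your own ingredients.} Compare coefficients in $\lambda_t(y)^p=1$. This gives $p\lambda^n(y)=0$ for $n<p$, and $p\lambda^p(y)=-y^p$. The recursion, with $2,\dots,p-1$ inverted mod $p$, together with Wilson's theorem gives $\lambda^{p-1}(y)=-\prod_{i=0}^{p-2}(y-i)$. Hence $p\lambda^p(y)=-\prod_{i=0}^{p-1}(y-i)=y-y^p$. Comparing the two expressions for $p\lambda^p(y)$ yields $y=0$.
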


\section{Hall--Petresco \texorpdfstring{$R$}{R}-groups}
\label{subsection:Hall--Petresco}

In this section we recall the Hall--Petresco identity
(see~\cite{clement_theory_2017, warfield2006nilpotent}). Then, for a normed
pre-$\lambda$-ring $R$, we define the notion of a Hall--Petresco $R$-group and
show that a Hall--Petresco $R$-group structure on a group $G$ induces one on the
quotient $G/Z(G)$ by its center. Throughout this section, $R$ denotes a normed
pre-$\lambda$-ring.

\begin{definition}[Hall--Petresco words]
\label{def:hall-petresco-words}
Let $F=F(x_1,\dots,x_m)$ be the free group on $m$ generators, with lower central
series $\gamma_k(F)$, $k\geq 1$. The $k$-th Hall--Petresco word $\tau_k\in F$ is
defined recursively by $\tau_1=x_1\cdots x_m$ and the relation
\begin{equation}
x_1^k\cdots x_m^k = \tau_1^k\, \tau_2^{\binom{k}{2}} \cdots \tau_{k-1}^{\binom{k}{k-1}}\, \tau_k.
\end{equation}
The key property of the Hall--Petresco words is that $\tau_k\in
\gamma_k(F)$~\cite[Cor.~4.1]{clement_theory_2017}. The \emph{Hall--Petresco
identity} states that for every nilpotent group $G$ of class $n$, all elements
$g_1,\dots,g_m\in G$, and every $k\in \mathbb{Z}$,
\begin{equation}
g_1^k\cdots g_m^k = \tau_1(\bar g)^k\, \tau_2(\bar g)^{\binom{k}{2}} \cdots \tau_{n-1}(\bar g)^{\binom{k}{n-1}}\, \tau_{n}(\bar g)^{\binom{k}{n}},
\end{equation}
where $\bar g=(g_1,\dots,g_m)$ and $\tau_k(\bar g)\in \gamma_k(G)$ is obtained
from $\tau_k$ by the substitution $x_i\mapsto g_i$.
\end{definition}

\begin{definition}[Hall--Petresco $R$-group]
\label{def:hall-petreco}
Let $R$ be a normed pre-$\lambda$-ring. An $R$-group $G$ is called a
\emph{Hall--Petresco $R$-group} if, for all elements $g_1,\dots,g_m\in G$ for
which the $R$-subgroup $\langle g_1,\dots,g_m \rangle_R$ is nilpotent of class at
most $n$, the identity
\begin{equation}
\label{eq:HP-lambda}
g_1^r\cdots g_m^r = \tau_1(\bar g)^{\lambda^1(r)}\, \tau_2(\bar g)^{\lambda^2(r)} \cdots \tau_{n-1}(\bar g)^{\lambda^{n-1}(r)}\, \tau_{n}(\bar g)^{\lambda^n(r)}
\end{equation}
holds for all $r\in R$. We call \eqref{eq:HP-lambda} the \emph{Hall--Petresco
identity for the $R$-powers of $g_1,\dots,g_m$}. The full subcategory of $\RGr$
spanned by the Hall--Petresco $R$-groups is denoted by
\begin{equation}
\RGr^{\mathrm{HP}} \subseteq \RGr.
\end{equation}
\end{definition}

\begin{remark}[Equivalent definition of a Hall--Petresco $R$-group]
Later, as a corollary of one of our main results
(Corollary~\ref{cor:<g_1,..,g_n>-nilpotent}), we will show the following: for a
Hall--Petresco $R$-group $G$, elements $g_1,\dots,g_m\in G$, and $n\geq 0$, the
$R$-subgroup $\langle g_1,\dots,g_m \rangle_R$ is nilpotent of class at most $n$
if and only if the ordinary subgroup $\langle g_1,\dots,g_m \rangle$ is nilpotent
of class at most $n$. Consequently, in the definition of a Hall--Petresco
$R$-group, one may replace $\langle g_1,\dots,g_m \rangle_R$ by
$\langle g_1,\dots,g_m \rangle$.
\end{remark}

\begin{remark}[$R$-powered nilpotent groups in the sense of Hall]
\label{rem:R-powered}
If $R$ is a binomial ring, then a nilpotent Hall--Petresco $R$-group is precisely
an $R$-powered nilpotent group in the sense of Hall~\cite{nilpotent-groups}.
\end{remark}

\begin{proposition}
\label{prop:quotient of Hall--Petresco by Z}
If $G$ is a Hall--Petresco $R$-group, then its center $Z(G)$ is an $R$-ideal, and
the quotient $G/Z(G)$ is again a Hall--Petresco $R$-group.
\end{proposition}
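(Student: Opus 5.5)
The plan is to prove the two claims in order: first that $Z(G)$ is an $R$-ideal, then the Hall--Petresco identity in $G/Z(G)$. Both rest on one device. Take a nilpotent $R$-subgroup of the quotient $G/Z(G)$ and pull it back to $G$. By Proposition~\ref{prop:Z_2(G)}, $Z(G)$ is a weak $R$-ideal, so $G/Z(G)$ is a weak $R$-group and the projection $\pi:G\to G/Z(G)$ is an $R$-homomorphism. Hence the preimage under $\pi$ of any weak $R$-subgroup of $G/Z(G)$ is closed under $R$-powers, i.e.\ it is an $R$-subgroup of $G$. If the subgroup downstairs is nilpotent of class at most $n$, its preimage $P$ is nilpotent of class at most $n+1$, because $Z(G)\subseteq P$ is central in $P$. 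Applying the Hall--Petresco identity in $G$ to elements of $P$ then controls the quotient.

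\emph{Step 1: $Z(G)$ is an $R$-ideal.} It is a normal $R$-subgroup by Lemma~\ref{lemma:centralizer}. Let $g,h\in G$ with $[g,h]\in Z(G)$. Then $\bar g=\pi(g)$ and $\bar h=\pi(h)$ commute in the weak $R$-group $G/Z(G)$. By Proposition~\ref{prop:commutative}, $\langle \bar g,\bar h\rangle_R$ is abelian. Its preimage $P$ is therefore an $R$-subgroup of $G$ of class at most $2$ that contains $g$ and $h$. Consequently $\langle g,h\rangle_R\subseteq P$ is nilpotent of class at most $2$, and the Hall--Petresco identity \eqref{eq:HP-lambda} with $m=2$, $n=2$ gives
\[
g^r h^r=(gh)^{\lambda^1(r)}\,\tau_2(g,h)^{\lambda^2(r)}=(gh)^r\,\tau_2(g,h)^{\lambda^2(r)}.
\]
The element $\tau_2(g,h)$ lies in $\gamma_2(\langle g,h\rangle)$, which is generated by conjugates of $[g,h]^{\pm1}$. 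Since $[g,h]$ is central, $\tau_2(g,h)\in Z(G)$, and so its $R$-power is central by Lemma~\ref{lemma:centralizer}. Rearranging,
\[
h^{-r}g^{-r}(gh)^r=\bigl(\tau_2(g,h)^{\lambda^2(r)}\bigr)^{-1}\in Z(G),
\]
where the order of the factors does not matter because the correction term is central. Thus $Z(G)$ is an $R$-ideal. By \cite[Prop.~5]{myasnikov_groups_1994}, $G/Z(G)$ is then an $R$-group and $\pi$ is an $R$-homomorphism.

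\emph{Step 2: the Hall--Petresco identity in $G/Z(G)$.} Let $\bar g_1,\dots,\bar g_m\in G/Z(G)$ generate an $R$-subgroup $\bar Q$ of class at most $n$, and choose lifts $g_i$. The preimage $P=\pi^{-1}(\bar Q)$ is an $R$-subgroup of class at most $n+1$ containing the $g_i$, so $\langle g_1,\dots,g_m\rangle_R$ has class at most $n+1$. The identity in $G$ gives
\[
g_1^r\cdots g_m^r=\prod_{k=1}^{n+1}\tau_k(\bar g)^{\lambda^k(r)}.
\]
Apply $\pi$, which commutes with words and with $R$-powers. The last factor dies, because $\pi(\tau_{n+1}(g_1,\dots,g_m))=\tau_{n+1}(\bar g_1,\dots,\bar g_m)\in\gamma_{n+1}(\bar Q)=1$, and its $R$-power is also trivial since $1^r=1$. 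What remains is exactly \eqref{eq:HP-lambda} for $\bar g_1,\dots,\bar g_m$ with class $n$.

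The main obstacle is Step 1. One must ensure that the Hall--Petresco hypothesis applies at all, i.e.\ that $\langle g,h\rangle_R$ is genuinely nilpotent, and not merely $\langle g,h\rangle$. This is handled by the preimage-of-abelian-$R$-subgroup argument, which needs only the weak $R$-structure on $G/Z(G)$ from Proposition~\ref{prop:Z_2(G)}, and not yet the full $R$-group structure. Everything else is bookkeeping with central elements.
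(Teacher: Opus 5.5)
Your proposal is correct and follows the paper's proof. Proposition~\ref{prop:commutative} makes $\langle \bar g,\bar h\rangle_R$ abelian, so $\langle g,h\rangle_R$ has class at most $2$ and the two-variable Hall--Petresco identity gives $h^{-r}g^{-r}(gh)^r\in Z(G)$; then a class-$n$ $R$-subgroup of $G/Z(G)$ is pulled back to a class-$(n+1)$ $R$-subgroup of $G$ and the identity is projected.

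The differences are cosmetic. You kill the last factor downstairs via $\tau_{n+1}(\bar g_1,\dots,\bar g_m)\in\gamma_{n+1}(\bar Q)=1$, whereas the paper uses centrality upstairs. By keeping $\tau_2(g,h)$, which equals $[g,h]$ here, you also avoid the paper's harmless sign slip of writing $[h,g]$.
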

\begin{proof}
Proposition \ref{prop:Z_2(G)} implies that the center $Z(G)$ is a weak $R$-ideal. Then $G/Z(G)$ is a weak $R$-group. Let us prove that $Z(G)$ is an $R$-ideal. Take
$g,h\in G$ with $[g,h]\in Z(G)$, and let $r\in R$. Denote by $\bar g,\bar h$ their images in $G/Z(G)$. Since $\bar g$ commutes with $\bar h$,  Proposition \ref{prop:commutative} implies that the weak $R$-subgroup $\langle \bar g, \bar h \rangle_R$ of $G/Z(G)$ is commutative. Therefore the $R$-subgroup $\langle g,h \rangle_R$ of $G$ is nilpotent of class at most $2$. The Hall--Petresco identity for
the $R$-powers of $g,h$ gives
\begin{equation}
g^r h^r = (gh)^r\, [h,g]^{\lambda^2(r)},
\end{equation}
and therefore
\begin{equation}
h^{-r} g^{-r} (gh)^r = [h,g]^{-\lambda^2(r)} \in Z(G).
\end{equation}
Thus $Z(G)$ is an $R$-ideal.

We now show that $G/Z(G)$ is a Hall--Petresco $R$-group. Let $g'_1, \dots, g'_m
\in G/Z(G)$ lie in a nilpotent $R$-subgroup $H'\leq G/Z(G)$ of class at most $n$.
Let $H\subseteq G$ be the preimage of $H'$, and choose preimages $g_i\in H$ of the
$g'_i$; then $H$ is an $R$-subgroup of $G$. Since $H\cap Z(G)$ is central in $H$
and $H/(H\cap Z(G))\cong H'$, the group $H$ is nilpotent of class at most $n+1$,
and $\gamma_{n+1}(H)\subseteq H\cap Z(G)$. In particular $\tau_{n+1}(\bar g)\in
H\cap Z(G)$, and since $H\cap Z(G)$ is an $R$-subgroup,
$\tau_{n+1}(\bar g)^{\lambda^{n+1}(r)}\in Z(G)$. Projecting the Hall--Petresco
identity for the $R$-powers of $g_1,\dots,g_m$ along $G\to G/Z(G)$, the final
factor $\tau_{n+1}(\bar g)^{\lambda^{n+1}(r)}$ maps to the identity, and we obtain
the Hall--Petresco identity for the $R$-powers of $g'_1,\dots,g'_m$.
\end{proof}

\begin{remark}[Hall--Petresco $R$-groups form a quasi-variety]
	In the language of universal algebra, the Hall--Petresco $R$-groups form a
	quasi-subvariety of the quasi-variety of $R$-groups. Indeed, the condition
	that $g_1,\dots,g_m\in G$ are contained in a nilpotent $R$-subgroup of class
	at most $c$ can be expressed as the vanishing of the $(c+1)$-fold commutators
	of the elements obtained from $g_1,\dots,g_m$ by the $R$-group operations.
\end{remark}

\begin{remark}[Hall--Petresco $R$-groups form a reflective subcategory]
\label{rem:HP-reflective}
For an $R$-group $G$, there is a universal $R$-homomorphism $G\to \mathrm{HP}(G)$
to a Hall--Petresco $R$-group: the group $\mathrm{HP}(G)$ is a quotient of $G$
obtained by a transfinite construction in which each step passes to the quotient
of the previous one by all Hall--Petresco identities. Hence $\RGr^{\mathrm{HP}}$
is a reflective subcategory of $\RGr$.
\end{remark}

\begin{definition}[Hall--Petresco $R$-completion]
\label{def:HP-adjunction}
For a group $G$, we write $C_R^{\mathrm{HP}}(G)$ for the Hall--Petresco $R$-group
$\mathrm{HP}(C_R(G))$ obtained from the $R$-completion $C_R(G)$ of $G$. This yields an adjunction
\begin{equation}
C_R^{\mathrm{HP}}: \Gr \leftrightarrows \RGr^{\mathrm{HP}} : U_R^{\mathrm{HP}},
\end{equation}
where $U_R^{\mathrm{HP}}$ is the forgetful functor.
\end{definition}

\section{Nilpotency preservation}

In this section we prove, for a normed pre-$\lambda$-ring $R$, that if $G$ is a
nilpotent group then the associated Hall--Petresco $R$-group
$C^{\mathrm{HP}}_R(G)$ is again nilpotent. As a corollary we obtain our main
result: the $R$-localization of a nilpotent group is nilpotent. Throughout this
section, $R$ denotes a normed pre-$\lambda$-ring.

\begin{proposition}
\label{prop:local are Hall--Petresco}
Every $R$-local group is a Hall--Petresco $R$-group.
\end{proposition}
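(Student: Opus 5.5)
Let $G$ be $R$-local, and fix $g_1,\dots,g_m\in G$ such that $H=\langle g_1,\dots,g_m\rangle_R$ is nilpotent of class at most $n$. By Corollary~\ref{cor:R-subgroup-local}, $H$ is itself $R$-local, so we may replace $G$ by $H$ and assume that $G$ is a nilpotent $R$-local group of class at most $n$. The plan is to compare the two sides of \eqref{eq:HP-lambda} as functions of $r$ and apply Corollary~\ref{cor:spoly-partial}. Define $\varphi,\psi:R\to G$ by
\begin{equation}
\varphi(r)=g_1^r\cdots g_m^r,\qquad \psi(r)=\tau_1(\bar g)^{\lambda^1(r)}\cdots \tau_n(\bar g)^{\lambda^n(r)}.
\end{equation}

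\emph{Step 1: both functions are polynomial.} Each map $r\mapsto g_i^r$ is the homomorphism $\varphi_{g_i}:R\to G$, so it is polynomial of degree at most $1$. Since $\Poly(R,G)$ is a group under pointwise multiplication, $\varphi\in\Poly(R,G)$. For $\psi$, note that $r\mapsto \tau_k(\bar g)^{\lambda^k(r)}$ is the composite of $\lambda^k:R\to R$, which is polynomial of degree at most $k$ by Proposition~\ref{prop:lambda_is_poly}, with the homomorphism $\varphi_{\tau_k(\bar g)}:R\to G$. By Remark~\ref{rem:composition:poly} this composite is polynomial. The composition rule there is stated as precomposition and postcomposition by homomorphisms, and here we postcompose a polynomial map $R\to R$ with a homomorphism into $G$; this is the $f'$-part of that remark, applied with $R^+$ abelian. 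Taking products again gives $\psi\in\Poly(R,G)$.

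\emph{Step 2: agreement on $\mathbb Z$.} Restricted along $\mathbb Z\to R$, we have $\varphi(k)=g_1^k\cdots g_m^k$, since $R$-powers by integers are ordinary powers by (R1) and (R2). We also need $\lambda^j(k)=\binom{k}{j}$ for $k\in\mathbb Z$. This follows from ($\lambda3$) and ($\lambda4$): $\lambda_t$ is a homomorphism with $\lambda_t(1)=1+t$, so $\lambda_t(k)=(1+t)^k$ for all $k\in\mathbb Z$, including negative $k$. Hence $\psi(k)$ is exactly the right-hand side of the classical Hall--Petresco identity in the nilpotent group $G$ of class at most $n$. This relies on the fact that the $\tau_j$ with $j>n$ vanish, because $\tau_j\in\gamma_j$. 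So $\varphi$ and $\psi$ agree on $\mathbb Z$.

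\emph{Step 3: conclusion.} Then $\psi^{-1}\varphi\in\Poly(R,G)$ restricts to the trivial function on $\mathbb Z$. By Corollary~\ref{cor:spoly-partial}, which is Theorem~\ref{theorem:spoly-inclusion} applied to $\mathbb Z\to R$ using the $R$-locality of $G$, it is trivial. Thus $\varphi=\psi$, which is \eqref{eq:HP-lambda}.

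I expect the main obstacle to be Step 1, specifically checking that $r\mapsto x^{\lambda^k(r)}$ is polynomial into a possibly nonabelian $G$. The cleanest route is to observe that its image lies in the abelian subgroup $\varphi_x(R)$. It is then a polynomial map into an abelian group, obtained as a homomorphism applied to a degree-$\le k$ polynomial $R\to R$. Such a map is polynomial into $G$ because the difference operators computed inside $\varphi_x(R)$ agree with those computed in $G$.
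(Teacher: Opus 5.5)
Your proof is correct and follows the paper's argument: you pass to the nilpotent $R$-local $R$-subgroup, show via Proposition~\ref{prop:lambda_is_poly} and Remark~\ref{rem:composition:poly} that both sides of \eqref{eq:HP-lambda} are polynomial maps out of $R$, note that they agree on $\mathbb{Z}$ by the classical Hall--Petresco identity, and conclude by Corollary~\ref{cor:spoly-partial}. Your explicit check that $\lambda_t(k)=(1+t)^k$, so that $\lambda^j(k)=\binom{k}{j}$ for all $k\in\mathbb{Z}$, fills in a step the paper leaves implicit. Also, Remark~\ref{rem:composition:poly} applies directly with the abelian (hence nilpotent) intermediate group $R^+$, so the workaround you give at the end is not needed.
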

\begin{proof}
Let $G$ be an $R$-local group, and let $g_1,\dots,g_m\in H$, where $H\leq G$ is a
nilpotent $R$-subgroup. Then $H$ is also $R$-local, so
Corollary~\ref{cor:spoly-partial} shows that the map
\begin{equation}
\label{eq:H-poly}
\Poly(R,H) \to \Poly(\bb{Z},H)
\end{equation}
is injective. By Proposition~\ref{prop:lambda_is_poly}, each $\lambda^n:R\to R$ is
polynomial, and the composite of a polynomial function with a homomorphism is
again polynomial (Remark~\ref{rem:composition:poly}); hence, for every $n$, the
map $R\to H$, $r\mapsto \tau_n(\bar g)^{\lambda^n(r)}$, is polynomial. Since a
product of polynomial functions into a nilpotent group is polynomial, both sides
of \eqref{eq:HP-lambda} are polynomial maps $R\to H$. They agree on
$\bb{Z}\subseteq R$ by the classical Hall--Petresco identities, so by injectivity
of \eqref{eq:H-poly} they agree on all of $R$. This is precisely the
Hall--Petresco identity for the $R$-powers of $g_1,\dots,g_m$.
\end{proof}

\begin{corollary}
\label{cor:L_R is a quotient of HP}
For a group $G$, the map $G\to L_R(G)$ induces a surjective $R$-homomorphism
\begin{equation}
C^{\mathrm{HP}}_R(G) \to L_R(G),
\end{equation}
with kernel $r_{R/1}(C^{\mathrm{HP}}_R(G))$.
\end{corollary}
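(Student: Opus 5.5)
The plan is to factor the surjection $C_R(G)\to L_R(G)$ of Theorem~\ref{thm:r-local-r-groups} through the reflection $C_R(G)\to \mathrm{HP}(C_R(G))=C^{\mathrm{HP}}_R(G)$ of Remark~\ref{rem:HP-reflective}, and then to identify the resulting map with the $R/1$-nullification of $C^{\mathrm{HP}}_R(G)$.

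\emph{Existence of the map.} By Proposition~\ref{prop:local are Hall--Petresco}, $L_R(G)$, with its unique $R$-group structure from Proposition~\ref{prop:R-ring_structure_on_a_local_group}, is a Hall--Petresco $R$-group. The $R$-homomorphism $C_R(G)\to L_R(G)$ therefore factors uniquely through $\mathrm{HP}(C_R(G))$. This gives an $R$-homomorphism $C^{\mathrm{HP}}_R(G)\to L_R(G)$ compatible with the maps from $G$. It is surjective because $C_R(G)\to L_R(G)$ is surjective.

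\emph{Identifying the kernel.} I would compare universal properties, as in the proof of Theorem~\ref{thm:r-local-r-groups}. By Proposition~\ref{prop:r-null}, $N_{R/1}(C^{\mathrm{HP}}_R(G))$ is $R$-local. Let $X$ be an $R$-local group and $\tilde X$ the corresponding $R$-group, which is Hall--Petresco by Proposition~\ref{prop:local are Hall--Petresco}. We then have the chain of bijections
\[
\Hom_{\RGr}(N_{R/1}(C^{\mathrm{HP}}_R(G)),\tilde X)\cong \Hom_{\RGr}(C^{\mathrm{HP}}_R(G),\tilde X)\cong \Hom_{\RGr}(C_R(G),\tilde X)\cong \Hom_{\Gr}(G,X).
\]
The first bijection uses the universal property of $N_{R/1}$ together with $X$ being $R/1$-null. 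The second uses Remark~\ref{rem:HP-reflective}. The third uses the adjunction $C_R\dashv U_R$.

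Proposition~\ref{prop:R-ring_structure_on_a_local_group}(2) identifies $\Hom_{\RGr}$ into $\tilde X$ with $\Hom_{\Gr}$ into $X$. Hence $G\to U_R(N_{R/1}(C^{\mathrm{HP}}_R(G)))$ is the universal map to an $R$-local group, so
\[
L_R(G)\cong U_R\bigl(N_{R/1}(C^{\mathrm{HP}}_R(G))\bigr)=C^{\mathrm{HP}}_R(G)/r_{R/1}(C^{\mathrm{HP}}_R(G)).
\]
By uniqueness in the factorization, the induced surjection is the projection, and its kernel is $r_{R/1}(C^{\mathrm{HP}}_R(G))$.

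\emph{Main obstacle.} The step needing most care is checking that the isomorphism obtained abstractly really is the map induced by $G\to L_R(G)$. This holds because all the bijections above are compatible with the units from $G$, and an $R$-homomorphism out of $C^{\mathrm{HP}}_R(G)$ is determined by its restriction to the image of $G$: that image generates $C_R(G)$ as an $R$-group, and $C_R(G)$ surjects onto $C^{\mathrm{HP}}_R(G)$.
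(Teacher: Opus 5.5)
Your proposal is correct and is essentially the paper's argument. The paper's proof just cites Theorem~\ref{thm:r-local-r-groups} together with Proposition~\ref{prop:local are Hall--Petresco}, and your write-up unpacks that citation: you rerun the universal-property chain with $C^{\mathrm{HP}}_R(G)$ in place of $C_R(G)$, using that every $R$-local target is Hall--Petresco.
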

\begin{proof}
This follows from Theorem~\ref{thm:r-local-r-groups} and
Proposition~\ref{prop:local are Hall--Petresco}.
\end{proof}

\begin{lemma}
\label{lemma:center_to_center}
For a group $G$, the canonical map $\eta : G \to C^{\mathrm{HP}}_R(G)$ sends the
center into the center:
\begin{equation}
\eta(Z(G)) \subseteq Z(C^{\mathrm{HP}}_R(G)).
\end{equation}
\end{lemma}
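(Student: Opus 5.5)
The plan is to reduce the statement to the fact that centralizers are weak $R$-subgroups (Lemma~\ref{lemma:centralizer}), together with the fact that $C^{\mathrm{HP}}_R(G)$ is generated, as an $R$-group, by $\eta(G)$.

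\emph{Step 1.} Fix $z\in Z(G)$. Since $z$ commutes with every element of $G$ and $\eta$ is a homomorphism, $\eta(z)$ commutes with every element of $\eta(G)$. Hence
\begin{equation}
\eta(G)\subseteq \mathsf{Cent}_{C^{\mathrm{HP}}_R(G)}(\{\eta(z)\}).
\end{equation}

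\emph{Step 2.} By Lemma~\ref{lemma:centralizer}, this centralizer is a weak $R$-subgroup of $C^{\mathrm{HP}}_R(G)$. It therefore contains the $R$-subgroup $\langle \eta(G)\rangle_R$ generated by $\eta(G)$.

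\emph{Step 3.} It remains to show that $\langle \eta(G)\rangle_R=C^{\mathrm{HP}}_R(G)$. Once this holds, $\eta(z)$ commutes with everything, so $\eta(z)\in Z(C^{\mathrm{HP}}_R(G))$.

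I expect Step 3 to be the only point needing care, and I would argue it in one of two ways.

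\emph{First route.} By Remark~\ref{rem:HP-reflective}, $C^{\mathrm{HP}}_R(G)=\mathrm{HP}(C_R(G))$ is a quotient of $C_R(G)$ by a surjective $R$-homomorphism. The group $C_R(G)$ is generated as an $R$-group by the image of $G$. The image of a generating set under a surjective $R$-homomorphism generates the target, since the image of an $R$-subgroup is an $R$-subgroup.

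\emph{Second route.} If one prefers not to rely on the explicit transfinite construction, use the universal property instead. An $R$-subgroup $S$ of a Hall--Petresco $R$-group is again Hall--Petresco: the defining condition on $g_1,\dots,g_m\in S$ refers to $\langle g_1,\dots,g_m\rangle_R$, which is the same whether computed in $S$ or in the ambient group, and identity~\eqref{eq:HP-lambda} is inherited. Hence $S=\langle\eta(G)\rangle_R$ is Hall--Petresco. The adjunction of Definition~\ref{def:HP-adjunction} then gives an $R$-homomorphism $C^{\mathrm{HP}}_R(G)\to S$ compatible with $\eta$. Composing it with the inclusion $S\hookrightarrow C^{\mathrm{HP}}_R(G)$ yields an endomorphism that restricts to the identity on $\eta(G)$. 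By uniqueness in the universal property, this composite is the identity, so the inclusion is surjective and $S=C^{\mathrm{HP}}_R(G)$.
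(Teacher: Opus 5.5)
Your argument is the paper's proof: the centralizer of $\eta(z)$ is an $R$-subgroup (the paper cites (R4) directly, you cite Lemma~\ref{lemma:centralizer}) containing $\eta(G)$, and $\eta(G)$ generates $C^{\mathrm{HP}}_R(G)$ as an $R$-group. The paper takes that generation fact for granted; your second route justifies it correctly via the universal property, while in your first route the fact that actually does the work is that \emph{preimages} of $R$-subgroups under $R$-homomorphisms are $R$-subgroups (the fact you cite, about images, only gives the trivial inclusion).
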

\begin{proof}
Let $z\in Z(G)$. Axiom (R4) implies that the centralizer $\mathsf{Cent}(\eta(z))$ in
$C^{\mathrm{HP}}_R(G)$ is an $R$-subgroup. Since it contains $\Im(\eta)$, which
generates $C^{\mathrm{HP}}_R(G)$ as an $R$-group, we have
$\mathsf{Cent}(\eta(z))=C^{\mathrm{HP}}_R(G)$. Therefore $\eta(z)\in Z(C^{\mathrm{HP}}_R(G))$.
\end{proof}

\begin{proposition}
\label{prop:F^{(n-1)}(G) is nilpotent} Hall-Petresco $R$-completion of a nilpotent group of class at most $n$ is a nilpotent group of class at most $n$.
\end{proposition}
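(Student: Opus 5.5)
We argue by induction on $n$. For $n=0$ the group $G$ is trivial. Then $C^{\mathrm{HP}}_R(G)$ is generated as an $R$-group by the image of $G$, which is $\{1\}$, so it is trivial. (Alternatively, the trivial group is a Hall--Petresco $R$-group and satisfies the universal property.) We could equally start at $n=1$, but the induction step will already cover the abelian case.

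For the inductive step, let $G$ be nilpotent of class at most $n\geq 1$, and write $\eta:G\to C=C^{\mathrm{HP}}_R(G)$. By Proposition~\ref{prop:quotient of Hall--Petresco by Z}, $Z(C)$ is an $R$-ideal and $C/Z(C)$ is a Hall--Petresco $R$-group. By Lemma~\ref{lemma:center_to_center}, $\eta(Z(G))\subseteq Z(C)$, so the composite $G\to C\to C/Z(C)$ factors through a homomorphism $\bar\eta: G/Z(G)\to C/Z(C)$. The group $G/Z(G)$ has class at most $n-1$, so by induction $C^{\mathrm{HP}}_R(G/Z(G))$ is nilpotent of class at most $n-1$. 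The universal property of $C^{\mathrm{HP}}_R$ (Definition~\ref{def:HP-adjunction}) gives an $R$-homomorphism
\[
\Phi: C^{\mathrm{HP}}_R(G/Z(G))\to C/Z(C)
\]
extending $\bar\eta$.

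The key claim is that $\Phi$ is surjective. Its image is an $R$-subgroup of $C/Z(C)$, since the image of an $R$-homomorphism is closed under $R$-powers. This image contains the image of $G$. The image of $G$ generates $C$ as an $R$-group, and the projection $C\to C/Z(C)$ is an $R$-homomorphism, so the image of $G$ generates $C/Z(C)$ as an $R$-group. Hence $\Phi$ is onto. A quotient of a nilpotent group of class at most $n-1$ is nilpotent of class at most $n-1$, so $C/Z(C)$ has class at most $n-1$, and therefore $C$ has class at most $n$.

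The step needing the most care is the generation claim. We must know that $C=\mathrm{HP}(C_R(G))$ is generated as an $R$-group by $\eta(G)$. This holds because $C_R(G)$ is generated by the image of $G$, and $C_R(G)\to \mathrm{HP}(C_R(G))$ is a surjective $R$-homomorphism: Remark~\ref{rem:HP-reflective} constructs $\mathrm{HP}$ as a quotient. This fact is also what Lemma~\ref{lemma:center_to_center} already used. The remaining ingredient is that $C/Z(C)$ is a Hall--Petresco $R$-group, so that the universal property of $C^{\mathrm{HP}}_R(G/Z(G))$ applies to it; this is exactly Proposition~\ref{prop:quotient of Hall--Petresco by Z}. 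This is the essential place where the Hall--Petresco condition, rather than the plain $R$-group structure, is needed.
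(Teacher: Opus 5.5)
Your proof is correct and follows the paper's argument essentially step for step. Both use induction on the class, get the map $G/Z(G)\to C/Z(C)$ from Lemma~\ref{lemma:center_to_center}, lift it to $C^{\mathrm{HP}}_R(G/Z(G))$ via Proposition~\ref{prop:quotient of Hall--Petresco by Z}, and conclude surjectivity from generation by the image of $G$. Your added justification is also right: $C^{\mathrm{HP}}_R(G)$ is $R$-generated by $\eta(G)$ because $\mathrm{HP}$ is a quotient construction, and the paper uses this point tacitly.
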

\begin{proof}
Recall that $\eta: G\to C^{\mathrm{HP}}_R(G)$ is the universal
homomorphism to a Hall--Petresco $R$-group: every homomorphism $G\to A$ to a
Hall--Petresco $R$-group lifts uniquely to an $R$-homomorphism
$C^{\mathrm{HP}}_R(G)\to A$.

We argue by induction on $n$. If $n = 0$ then $G=1$, and the statement is clear.
Suppose the claim holds for nilpotent groups of class at most $n$, and let $G$ be
nilpotent of class at most $n+1$. Then $G/Z(G)$ is nilpotent of class at most $n$, so
by the inductive hypothesis $C^{\mathrm{HP}}_R(G/Z(G))$ is nilpotent of class at most
$n$. By Lemma~\ref{lemma:center_to_center}, the map $\eta$ induces a homomorphism
$G/Z(G) \to C^{\mathrm{HP}}_R(G)/Z(C^{\mathrm{HP}}_R(G))$. By
Proposition~\ref{prop:quotient of Hall--Petresco by Z}, the quotient
$C^{\mathrm{HP}}_R(G)/Z(C^{\mathrm{HP}}_R(G))$ is a Hall--Petresco $R$-group, so
this homomorphism lifts to an $R$-homomorphism
\begin{equation}
\label{eq:map-G/Z}
C^{\mathrm{HP}}_R(G/Z(G))\to C^{\mathrm{HP}}_R(G)/Z(C^{\mathrm{HP}}_R(G)).
\end{equation}
Both $R$-groups are generated by the image of $G$, so \eqref{eq:map-G/Z} is
surjective. Hence $C^{\mathrm{HP}}_R(G)/Z(C^{\mathrm{HP}}_R(G))$ is nilpotent of
class at most $n$, and therefore $C^{\mathrm{HP}}_R(G)$ is nilpotent of class at
most $n+1$.
\end{proof}

\begin{corollary}
\label{cor:<g_1,..,g_n>-nilpotent}
Let $G$ be a Hall--Petresco $R$-group, let $g_1,\dots,g_m\in G$, and let $n\geq
0$. Then the $R$-subgroup $\langle g_1,\dots,g_m \rangle_R$ is nilpotent of class
at most $n$ if and only if the ordinary subgroup $\langle g_1,\dots,g_m \rangle$
is nilpotent of class at most $n$.
\end{corollary}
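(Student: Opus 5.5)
The inclusion $\langle g_1,\dots,g_m\rangle\subseteq\langle g_1,\dots,g_m\rangle_R$ gives the forward direction at once: if the $R$-subgroup is nilpotent of class at most $n$, then so is the ordinary subgroup it contains. The substance is the converse. Let $K=\langle g_1,\dots,g_m\rangle$ and assume $K$ is nilpotent of class at most $n$. I will compare $\langle g_1,\dots,g_m\rangle_R$ with the Hall--Petresco $R$-completion of $K$ and then invoke Proposition~\ref{prop:F^{(n-1)}(G) is nilpotent}.

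First, I pass to a Hall--Petresco $R$-subgroup. The $R$-subgroup $H=\langle g_1,\dots,g_m\rangle_R$ is an $R$-group, and I claim it is Hall--Petresco.

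\begin{itemize}
\item Take elements $h_1,\dots,h_k\in H$ whose $R$-subgroup is nilpotent of class at most $c$.
\item That $R$-subgroup is the same whether it is computed in $H$ or in $G$.
\item Hence the identity \eqref{eq:HP-lambda} holds for these elements in $G$, since $G$ is Hall--Petresco.
\item Both sides of \eqref{eq:HP-lambda} lie in $H$, so the identity holds in $H$.
\end{itemize}

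Thus $\RGr^{\mathrm{HP}}$ is closed under $R$-subgroups, and $H$ is a Hall--Petresco $R$-group.

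Next, I apply the universal property. The inclusion $K\hookrightarrow H$ is a homomorphism into a Hall--Petresco $R$-group. By the adjunction of Definition~\ref{def:HP-adjunction}, it extends to an $R$-homomorphism
\begin{equation}
C_R^{\mathrm{HP}}(K)\to H.
\end{equation}
The image of this map is an $R$-subgroup of $H$ containing $g_1,\dots,g_m$. Since $H$ is generated as an $R$-group by the $g_i$, the map is surjective.

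Finally, Proposition~\ref{prop:F^{(n-1)}(G) is nilpotent} says that $C_R^{\mathrm{HP}}(K)$ is nilpotent of class at most $n$. Nilpotency of class at most $n$ passes to quotients, so $H$ is nilpotent of class at most $n$.

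The only point requiring care is the first step, namely that an $R$-subgroup of a Hall--Petresco $R$-group is again Hall--Petresco. This works because the hypothesis in Definition~\ref{def:hall-petreco} refers to the $R$-subgroup generated by the elements, which is intrinsic and does not depend on the ambient group. Two further facts are immediate: the image of an $R$-homomorphism is an $R$-subgroup, and any quotient of a group of class at most $n$ again has class at most $n$.
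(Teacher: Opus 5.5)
Your proof is correct and follows the paper's argument: extend the inclusion of $\langle g_1,\dots,g_m\rangle$ to an $R$-homomorphism out of $C_R^{\mathrm{HP}}(\langle g_1,\dots,g_m\rangle)$, identify its image with $\langle g_1,\dots,g_m\rangle_R$, and apply Proposition~\ref{prop:F^{(n-1)}(G) is nilpotent}. The only difference is that the paper maps directly into $G$ and takes the image, so your preliminary step showing that $\langle g_1,\dots,g_m\rangle_R$ is itself a Hall--Petresco $R$-group is valid but unnecessary.
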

\begin{proof}
If $\langle g_1,\dots,g_m \rangle_R$ is nilpotent of class at most $n$, then so is
its subgroup $\langle g_1,\dots,g_m \rangle$. Conversely, suppose $\langle
g_1,\dots,g_m \rangle$ is nilpotent of class at most $n$. Since $G$ is a
Hall--Petresco $R$-group, the inclusion $\langle g_1,\dots,g_m \rangle \hookrightarrow
G$ induces an $R$-homomorphism $C_R^{\mathrm{HP}}(\langle g_1,\dots,g_m \rangle)\to
G$ whose image is $\langle g_1,\dots,g_m \rangle_R$. By
Proposition~\ref{prop:F^{(n-1)}(G) is nilpotent},
$C_R^{\mathrm{HP}}(\langle g_1,\dots,g_m \rangle)$ is nilpotent of class at most
$n$, and hence so is its quotient $\langle g_1,\dots,g_m \rangle_R$.
\end{proof}

\begin{theorem}
	\label{thm:nilpotency-preservation}
	For a normed pre-$\lambda$-ring $R$, the $R$-localization of a nilpotent
	group is nilpotent.
\end{theorem}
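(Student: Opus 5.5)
The plan is to combine Corollary~\ref{cor:L_R is a quotient of HP} with Proposition~\ref{prop:F^{(n-1)}(G) is nilpotent}. Let $G$ be a nilpotent group of class at most $n$, and let $R$ be a normed pre-$\lambda$-ring, the standing assumption of this section.

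The first step is Proposition~\ref{prop:F^{(n-1)}(G) is nilpotent}. It says the Hall--Petresco $R$-completion $C^{\mathrm{HP}}_R(G)$ is nilpotent of class at most $n$. Its proof is the induction on class via central quotients, using Proposition~\ref{prop:quotient of Hall--Petresco by Z} and Lemma~\ref{lemma:center_to_center}, and it was already carried out above for a general normed pre-$\lambda$-ring.

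The second step uses Proposition~\ref{prop:local are Hall--Petresco}: every $R$-local group, in particular $L_R(G)$, is a Hall--Petresco $R$-group. The homomorphism $G\to L_R(G)$ therefore extends to an $R$-homomorphism $C^{\mathrm{HP}}_R(G)\to L_R(G)$. Corollary~\ref{cor:L_R is a quotient of HP} states that this map is surjective. This follows from Theorem~\ref{thm:r-local-r-groups}, since $C_R(G)\to L_R(G)$ is already surjective and it factors through $C^{\mathrm{HP}}_R(G)=\mathrm{HP}(C_R(G))$.

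Hence $L_R(G)$ is a quotient group of a nilpotent group of class at most $n$. Nilpotency of class at most $n$ passes to homomorphic images, because $\gamma_{n+1}$ of the image is the image of $\gamma_{n+1}$. So $L_R(G)$ is nilpotent of class at most $n$. This gives a sharper form of the theorem, with the class bound preserved.

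No step here is a real obstacle, since the substantive work lies in the cited results. The two points to check are that the surjection in Corollary~\ref{cor:L_R is a quotient of HP} is compatible with the factorization through the reflection $\mathrm{HP}$, and that the binomial case of the main theorem follows. For the latter, Theorem~\ref{th:binomial-lambda} equips any binomial ring with a $\lambda$-ring structure, which is in particular a normed pre-$\lambda$-ring structure.
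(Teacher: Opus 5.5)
Your proposal is correct and matches the paper's proof, which deduces the theorem in one line from the nilpotency of the Hall--Petresco $R$-completion $C^{\mathrm{HP}}_R(G)$ together with the surjective $R$-homomorphism $C^{\mathrm{HP}}_R(G)\to L_R(G)$. Your further remark that the argument keeps the class bound (class at most $n$) is also right, because that bound passes to quotients.
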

\begin{proof}
This follows from Proposition~\ref{prop:F^{(n-1)}(G) is nilpotent} and
Corollary~\ref{cor:L_R is a quotient of HP}.
\end{proof}

\begin{remark}[$R$-localizations of finite $p$-groups]
 In 90's, Farjoun posed another, similar question: is it true that for
every localization $L$ on the category of groups and every finite $p$-group $G$, the coaugmentation $G\to
L(G)$ is an epimorphism? For $R$-localizations the answer is affirmative: in her
(unpublished) thesis, Gemma Bastardas proved that $L_f$ has this property whenever
$f:A\to B$ is a homomorphism of abelian
groups~\cite[Th.~B, p.~27]{bastardas2004localitzacions}.
\end{remark}

\section{\texorpdfstring{$E$}{E}-rings}

In this section, for a commutative ring $R$, we define a quotient ring $R^e$ that
is an $E$-ring. We show that the $R$-localization is the $R/1$-nullification of
the $R^e$-localization; consequently, many questions about $R$-localizations
reduce to the case of $E$-rings. We also recall the notion of a solid ring,
introduced by Bousfield--Kan, and show that if $R$ is an $E$-ring, then $L_R$ is
weakly right exact only if $R$ is solid.

\begin{definition}[$E$-ring]
	A commutative ring $R$ is called an \emph{$E$-ring} if the additive group
	$R^+$ is $R$-local.
\end{definition}

\begin{remark}[Equivalent definition of an $E$-ring]
A commutative ring $R$ is an $E$-ring if and only if the evaluation map
\begin{equation}
\mathrm{ev}_1 : \End_{\bb{Z}}(R^+) \to R, \qquad \varphi\mapsto \varphi(1),
\end{equation}
is a ring isomorphism. (This explains the ``$E$'' in the name: it stands for
\emph{endomorphism}.) Equivalently, $R$ is an $E$-ring if and only if every group
endomorphism of $R^+$ is multiplication by an element of $R$. Note that the ring
structure on an $E$-ring $R$ is determined by the abelian group $R^+$ together
with the choice of unit $1\in R$.
\end{remark}

\begin{remark}[Uniqueness of the ring structure on an $E$-ring]
\label{rem_uniquness}
If an abelian group $R^+$ admits an $E$-ring structure with unit $1\in R^+$, then
a ring structure on $R^+$ with this unit is unique. Indeed, a ring structure is
determined by a homomorphism $R\to \End_{\bb{Z}}(R^+)$, which must be the inverse
of $\mathrm{ev}_{1} : \End_{\bb{Z}}(R^+)\to R$.
\end{remark}

\begin{definition}[The $E$-ring of a ring]
\label{def:R^e}
By~\cite[Th.~3.7]{casacuberta2000structures}, for every localization $L$ on the
category of groups, $L(\bb{Z})$ carries a unique $E$-ring structure for which the
coaugmentation $\bb{Z}\to L(\bb{Z})$ is a ring homomorphism. For a commutative
ring $R$, we denote the $E$-ring associated with the $R$-localization by
\begin{equation}
R^e = L_R(\bb{Z}).
\end{equation}
Since $R^e$ is $R$-local, the map $\bb{Z} \to R^e$ lifts uniquely to a map
\begin{equation}
q_R : R \to R^e.
\end{equation}
\end{definition}

\begin{proposition}
\label{prop:R->R^e-surjective}
The map $q_R:R\to R^e$ is a surjective ring homomorphism with kernel
$r_{R/1}(R^+)$.
\end{proposition}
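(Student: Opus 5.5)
We obtain $q_R$ directly from Theorem~\ref{thm:r-local-r-groups} applied to $G=\bb{Z}$. Recall that $C_R(\bb{Z})\cong R$ as $R$-groups, with $x^r=xr$, and that the unit $\bb{Z}\to R$ is the structure map. Theorem~\ref{thm:r-local-r-groups} then says that the coaugmentation $\bb{Z}\to L_R(\bb{Z})=R^e$ induces a surjective $R$-homomorphism $R=C_R(\bb{Z})\to R^e$ with kernel $r_{R/1}(C_R(\bb{Z}))=r_{R/1}(R^+)$.

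This induced map agrees with $q_R$. It is a group homomorphism $R^+\to R^e$ sending $1$ to the image of $1\in\bb{Z}$, and $R^e$ is $R$-local, so such a homomorphism is unique. Hence $q_R$ is a surjective additive homomorphism with the stated kernel.

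It remains to show that $q_R$ is multiplicative; it sends $1$ to $1$ by construction. Fix $s\in R$ and compare the two maps $R^+\to R^e$ given by
\[
r\mapsto q_R(rs)\quad\text{and}\quad r\mapsto q_R(r)\,q_R(s).
\]
The first is additive because $q_R$ is. The second is additive because multiplication in the ring $R^e$ is biadditive. At $r=1$ both equal $q_R(s)$. Since $R^e$ is $R$-local, a homomorphism $R^+\to R^e$ is determined by its value at $1$, so the two maps coincide, and $q_R$ is a ring homomorphism.

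Here I use only that $R^e$ is a ring whose additive group is $R^e$ regarded as a group (Definition~\ref{def:R^e}), so its multiplication is biadditive. The subtle point is identifying the kernel computed inside $\RGr$ with $r_{R/1}(R^+)$. This is exactly the notation $r_{R/1}$ applied to the $R$-group $R$, so no extra argument is needed beyond the identification $C_R(\bb{Z})\cong R$ from the example in Section~\ref{section:r-groups}.
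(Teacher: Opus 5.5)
Your proof is correct. The first half matches the paper: you apply Theorem~\ref{thm:r-local-r-groups} to $G=\bb{Z}$ with $C_R(\bb{Z})\cong R$, and you identify the induced map with $q_R$ using the $R$-locality of $R^e$.

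You differ in how you show that $q_R$ is a ring homomorphism. The paper runs a transfinite induction to show that every stage of the tower defining $r_{R/1}(R^+)$ is an ideal of $R$. This makes $R^e$ a quotient ring of $R$. It then invokes Remark~\ref{rem_uniquness} (uniqueness of a ring structure with prescribed unit on an $E$-ring) to match this quotient structure with the $E$-ring structure of Definition~\ref{def:R^e}.

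You instead take that $E$-ring structure as given and check multiplicativity directly. For fixed $s$, the maps $r\mapsto q_R(rs)$ and $r\mapsto q_R(r)\,q_R(s)$ are additive maps $R^+\to R^e$. They agree at $1$, using $q_R(1)=1_{R^e}$ because the coaugmentation is a ring map. So they coincide by $R$-locality, and the ideal property of the kernel then comes for free.

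Your route is shorter and avoids both the transfinite induction and the uniqueness remark. In effect, your ``two additive maps agreeing at $1$'' step is the uniqueness argument run with $R$-locality in place of the $E$-ring property. It shows slightly more: $q_R$ is multiplicative for any ring structure on the group $R^e$ whose unit is the image of $1$, commutative or not.

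The paper's route instead builds the multiplication on $R^e$ intrinsically, as a quotient of $R$, and only afterwards compares it with the given $E$-ring structure. For the statement as posed, nothing is lost by your approach.
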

\begin{proof}
By Theorem~\ref{thm:r-local-r-groups} and the identification $C_R(\bb{Z})\cong R$,
we have $R^e\cong N_{R/1}(R^+)$, and hence $R^e\cong R^+/r_{R/1}(R^+)$
(see~\eqref{eq:N_r}), with $q_R$ the canonical projection. A straightforward
transfinite induction shows that $r_{R/1}^\alpha(R^+)$ is an ideal of $R$ for
every ordinal $\alpha$; therefore $r_{R/1}(R^+)$ is an ideal of $R$, and $R^e$ is
a quotient ring of $R$. By Remark~\ref{rem_uniquness}, this quotient-ring
structure on $R^e$ coincides with its $E$-ring structure.
\end{proof}

\begin{corollary}
\label{cor:Re-null}
The abelian group $(R^e)^+$ is the $R/1$-nullification of $R^+$.
\end{corollary}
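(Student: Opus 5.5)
Corollary~\ref{cor:Re-null} follows essentially by unwinding the proof of Proposition~\ref{prop:R->R^e-surjective}. That proof gives $R^e\cong R^+/r_{R/1}(R^+)$, where $R^+=C_R(\bb{Z})$ carries its tautological $R$-group structure $x^r=xr$. By the definition of $N_{R/1}$ this says $R^e\cong N_{R/1}(R^+)$ in $\RGr$, with $q_R$ as the universal $R$-homomorphism from $R^+$ to an $R$-group whose underlying group is $R/1$-null. What remains is to pass from this statement in $\RGr$ to one about plain abelian groups. We must show that $q_R:R^+\to (R^e)^+$ is the universal homomorphism from $R^+$ to an $R/1$-null group, within either $\Gr$ or abelian groups.

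First, $(R^e)^+$ is $R/1$-null. It is $R$-local, so Proposition~\ref{prop:r-null} applies. Next, I would check the universal property. Let $X$ be an $R/1$-null group and $\varphi:R^+\to X$ a homomorphism. We must show that $\varphi$ factors uniquely through $q_R$. Uniqueness is immediate because $q_R$ is surjective. For existence, the approach is to equip the relevant target with an $R$-group structure. Replacing $X$ by the image $\varphi(R^+)$, which is abelian and still $R/1$-null since it is a subgroup of $X$, we may assume $X$ is abelian and $\varphi$ is surjective. The natural move is to give $X$ the $R$-group structure $\varphi(a)^r:=\varphi(ar)$. The main obstacle is well-definedness: we need $\ker\varphi$ to be an ideal of $R$. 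I would try to avoid this by comparing directly with $r_{R/1}$.

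Concretely, I will show by transfinite induction that $r^\alpha_{R/1}(R^+)\subseteq\ker\varphi$ for every ordinal $\alpha$. The limit step is trivial. For the successor step, suppose $\varphi$ factors through $R^+/r^\alpha_{R/1}(R^+)$. Recall that $r^{\alpha+1}_{R/1}(R^+)$ is the preimage of the $R$-ideal generated by images of $R$-homomorphisms $F_R(R/1)\to R^+/r^\alpha_{R/1}(R^+)$. Such an image is generated as an $R$-subgroup by the image of an ordinary homomorphism $R/1\to R^+/r^\alpha_{R/1}(R^+)$.

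Since the quotient is abelian and is a quotient ring of $R$ (the proof of Proposition~\ref{prop:R->R^e-surjective} shows each $r^\alpha$ is an ideal), the $R$-ideal generated by a subset is the ring ideal it generates. So I must show that $\varphi$ kills the ideal $I$ generated by the image $S$ of a homomorphism $\psi:R/1\to R/J$, where $J=r^\alpha_{R/1}(R^+)$. For any $t\in R$, multiplication by $t$ is an additive endomorphism of $R/J$. Hence $x\mapsto t\,\psi(x)$ is again a homomorphism $R/1\to R/J$. Composing with the induced map $\bar\varphi:R/J\to X$ gives a homomorphism $R/1\to X$, which is trivial since $X$ is $R/1$-null. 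Therefore $\bar\varphi(tS)=1$ for all $t\in R$. As $\bar\varphi$ is additive, it kills the ideal $I=\sum_t tS$.

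This completes the induction, so $r_{R/1}(R^+)\subseteq\ker\varphi$ and $\varphi$ factors through $q_R$. This establishes that $(R^e)^+$ is the $R/1$-nullification of $R^+$. The key point, and the only nonroutine step, is using the multiplications by $t$ to enlarge the image of a map from $R/1$ into the ideal it generates.
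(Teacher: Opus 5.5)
Your proof is correct and follows the paper's route. The paper states this corollary without a separate proof, reading it off from $R^e\cong N_{R/1}(R^+)=R^+/r_{R/1}(R^+)$ and from the fact, established in the proof of Proposition~\ref{prop:R->R^e-surjective}, that every stage $r^\alpha_{R/1}(R^+)$ is an ideal. Your transfinite induction uses multiplication by $t\in R$ to show that $\bar\varphi$ kills the ideal generated by the images of homomorphisms $R/1\to R/J$. This makes explicit the passage from the nullification in $\RGr$ to the nullification in $\Gr$, which is the form in which the corollary is later used in Proposition~\ref{prop:lambda-on-R^e}.
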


\begin{remark}
If $R$ is an $E$-ring, then $q_R$ is an isomorphism $R\cong R^e$. Indeed, in this
case $R^+$ is $R$-local, hence $R/1$-null, so $r_{R/1}(R^+)=0$. It follows that
for every commutative ring $R$,
\begin{equation}
L_{R^e}(\bb{Z})\cong R^e.
\end{equation}
\end{remark}

\begin{proposition}
\label{prop:epi-L_E->L_R}
A group is $R$-local if and only if it is both $R^e$-local and $R/1$-null.
Moreover, for every group $G$, the map $G\to L_R(G)$ induces an epimorphism
\begin{equation}
L_{R^e}(G)\to L_R(G)
\end{equation}
with kernel $r_{R/1}(L_{R^e}(G))$. In other words, there is an isomorphism of $R$-groups
\begin{equation}
L_R(G) \cong N_{R/1}(L_{R^e}(G)).
\end{equation}
\end{proposition}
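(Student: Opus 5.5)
The plan has three steps: first characterize $R$-local groups via $R^e$, then deduce the universal property of $L_R(G)$ among $R^e$-local groups, and finally identify the map $L_{R^e}(G)\to L_R(G)$ with the nullification. For the first step I would work with the maps $\bb{Z}\to R\to R^e$, where the second map is $q_R$.

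\textbf{Step 1: $R$-local $\Rightarrow$ $R^e$-local and $R/1$-null.} Let $G$ be $R$-local. It is $R/1$-null by Proposition~\ref{prop:r-null}. Fix $x\in G$ and let $\varphi_x:R\to G$ be the unique homomorphism with $\varphi_x(1)=x$. Its image is abelian and is an $R$-subgroup (namely $\{x^r\}$, using (R1)–(R3)), so by Corollary~\ref{cor:R-subgroup-local} it is an abelian $R$-local group $A$. I claim $\varphi_x$ kills $\ker q_R=r_{R/1}(R^+)$. The point is that $A$, being abelian and $R$-local, is an $R/1$-null $R$-module, i.e.\ an $R/1$-null abelian $R$-group. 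Hence every $R$-homomorphism $R^+\to A$ factors through $N_{R/1}(R^+)=R^e$ (Corollary~\ref{cor:Re-null}), and $\varphi_x$ is an $R$-homomorphism by Proposition~\ref{prop:R-ring_structure_on_a_local_group}(2). So $\varphi_x$ factors through $R^e$.

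Uniqueness of the factorization holds because $q_R$ is surjective: any homomorphism $R^e\to G$ sending $1\mapsto x$ composes to a homomorphism $R\to G$ sending $1\mapsto x$, which must be $\varphi_x$. Thus $G$ is $R^e$-local.

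\textbf{Step 1, converse: $R^e$-local and $R/1$-null $\Rightarrow$ $R$-local.} Suppose $G$ is $R^e$-local and $R/1$-null. Composing with $q_R$ gives an $R$-group structure on $G$ satisfying (R1)–(R5), since all the axioms pull back along a ring homomorphism. Then $G$ is $R$-local by Proposition~\ref{prop:r-null}.

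\textbf{Step 2: the map $L_{R^e}(G)\to L_R(G)$.} Since $L_R(G)$ is $R^e$-local by Step 1, the universal property yields a map $L_{R^e}(G)\to L_R(G)$ under $G$. To see that it is the $R/1$-nullification, give $L_{R^e}(G)$ its $R$-group structure via $q_R$. For any $R$-local group $X$, Step 1 and Proposition~\ref{prop:R-ring_structure_on_a_local_group} give
\[
\Hom_{\RGr}(N_{R/1}(L_{R^e}(G)),X)\cong \Hom(L_{R^e}(G),X)\cong\Hom(G,X),
\]
where the first isomorphism uses that $X$ is an $R/1$-null $R$-group and the second uses that $X$ is $R^e$-local. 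The left-hand group $N_{R/1}(L_{R^e}(G))$ is itself $R$-local by Proposition~\ref{prop:r-null}. Hence it represents $L_R(G)$, and the comparison map is the quotient by $r_{R/1}(L_{R^e}(G))$, so it is surjective.

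\textbf{Main obstacle.} The delicate point is the first half of Step 1: showing that $\varphi_x$ kills $\ker q_R$. This requires applying Corollary~\ref{cor:Re-null} inside the category of abelian $R$-groups. It is legitimate because $r_{R/1}$ is computed from $R$-homomorphisms out of $F_R(R/1)$, and any such homomorphism into an abelian group factors through the abelianization. Still, I would check carefully that the transfinite construction of $r_{R/1}(R^+)$ maps into the trivial subgroup of an $R/1$-null target. At each stage this follows by induction: the image of each $\mathsf r^\alpha$ is an $R/1$-null image quotient, hence trivial.
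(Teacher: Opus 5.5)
Your proof is correct. The converse half of Step 1 and all of Step 2 match the paper's argument essentially verbatim: you pull back the $R^e$-structure along $q_R$ and apply Proposition~\ref{prop:r-null}, and you use the same chain of Hom-isomorphisms to show that $N_{R/1}(L_{R^e}(G))$ has the universal property of $L_R(G)$.

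Where you differ is the forward implication, $R$-local $\Rightarrow$ $R^e$-local. The paper gets it in one line from the definition $R^e=L_R(\bb{Z})$. For $R$-local $G$, the universal property of the coaugmentation $\bb{Z}\to L_R(\bb{Z})$, which is the unit map of the ring $R^e$, says exactly that $\Hom(R^e,G)\to\Hom(\bb{Z},G)$ is a bijection.

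You instead use the explicit description from Proposition~\ref{prop:R->R^e-surjective}: $q_R$ is the surjection $R\to R/r_{R/1}(R^+)= N_{R/1}(R^+)$. You then check that each $\varphi_x$ kills $\ker q_R$, and get uniqueness from the surjectivity of $q_R$. This is valid, and it shows concretely that the $R$-exponentiation on an $R$-local group factors through $R^e$. It is, however, longer and relies on more machinery than the formal argument.

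The ``main obstacle'' you flag is not one. The map $\varphi_x\colon R^+\to G$ is already an $R$-homomorphism into an $R$-group whose underlying group is $R/1$-null. So the universal property of $N_{R/1}$ in $\RGr$ applies directly to $G$. There is no need to pass to the abelian subgroup $A=\{x^r\}$, invoke Corollary~\ref{cor:R-subgroup-local}, abelianize, or re-run the transfinite construction.
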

\begin{proof}
Let $G$ be an $R$-local group. Since $\bb{Z} \to R^e$ is the universal map to an
$R$-local group, the map $\Hom(R^e,G)\to \Hom(\bb{Z},G)$ is a bijection; thus $G$
is $R^e$-local, and it is $R/1$-null by Theorem~\ref{thm:r-local-r-groups}.

Conversely, assume that $G$ is $R^e$-local and $R/1$-null. Then $G$ admits an
$R^e$-group structure, and the homomorphism $q_R:R\to R^e$ pulls it back to an
$R$-group structure on $G$. By Theorem~\ref{thm:r-local-r-groups}, $G$ is
$R$-local.

For the functorial statement, let $X$ be an $R$-local group, $\tilde X$ be the group $X$ treated as an $R$-group and $G$ be any group. Note that $N_{R/1}(L_{R^e}(G))$ is an $R/1$-null $R$-group, and hence, it is $R$-local. Then using Proposition \ref{prop:R-ring_structure_on_a_local_group}(2), we obtain  
\begin{align}
  \Hom_{\Gr}(U_R(N_{R/1}(L_{R^e}(G))),X) &= \Hom_{\RGr}(N_{R/1}(L_{R^e}(G)),\tilde X)
  \\
  & \cong \Hom_{\RGr}(L_{R^e}(G),\tilde X)\\
  & \cong \Hom_{R^e\Gr}(L_{R^e}(G),\tilde X)
  \\  
  & \cong \Hom_{\Gr}(L_{R^e}(G),X)
  \\  
  & \cong \Hom_{\Gr}(G,X).
\end{align}
Therefore, $G\to U_R(N_{R/1}(L_{R^e}(G)))$ is the $R$-localization: $L_R(G)\cong U_R(N_{R/1}(L_{R^e}(G)))$.
\end{proof}

\begin{remark}
\label{rem:L_R(R)}
Since every $R$-local group $G$ is also $R^e$-local, the map $\Hom(R^e,G)\to
\Hom(R,G)$ is a bijection. Therefore $q_R$ induces an isomorphism
\begin{equation}
L_R(R)\cong R^e.
\end{equation}
\end{remark}

\begin{lemma}
\label{lemma:localization of free abelian}
For an $E$-ring $R$ and a free abelian group $A$, there is a natural isomorphism
\begin{equation}
L_R(A) \cong A\otimes R.
\end{equation}
\end{lemma}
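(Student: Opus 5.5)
The natural map is $\eta_A\colon A\to A\otimes R$, $a\mapsto a\otimes 1$. We show it is the universal map to an $R$-local group. Write $A=\bigoplus_{i\in I}\mathbb{Z}$, so that $A\otimes R\cong\bigoplus_{i\in I}R^+$. There are two things to prove: $A\otimes R$ is $R$-local, and every homomorphism $A\to X$ into an $R$-local group $X$ extends uniquely along $\eta_A$.

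\textbf{Locality of $A\otimes R$.} By Proposition~\ref{prop:r-null}, it suffices to give $A\otimes R$ an $R$-group structure and check that it is $R/1$-null. Since $A\otimes R$ is an $R$-module, it is an abelian $R$-group with $x^r=xr$; axioms (R1)--(R5) are immediate. For nullity, let $\varphi\colon R/1\to\bigoplus_i R^+$ be a homomorphism. Composing with each coordinate projection gives a homomorphism $R/1\to R^+$. Since $R$ is an $E$-ring, $R^+$ is $R$-local and hence $R/1$-null, so each such composite is zero. Therefore $\varphi=0$. (Nullity passes to arbitrary direct products, and a direct sum is a subgroup of the product, so this handles infinite $I$.)

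\textbf{Universal property.} Let $X$ be $R$-local and $f\colon A\to X$ a homomorphism. The elements $x_i=f(e_i)$ commute pairwise. By Proposition~\ref{prop:commutative}, the $R$-subgroup $\langle x_i\rangle_R$ is abelian. By Corollary~\ref{cor:R-subgroup-local}, it is $R$-local, and by uniqueness in Proposition~\ref{prop:R-ring_structure_on_a_local_group} its $R$-powers are the restrictions of those of $X$. So we may assume $X$ is abelian. Using (R1) and (R5), an abelian $R$-group is an $R$-module. Define
\[\tilde f\Big(\sum_i e_i\otimes r_i\Big)=\prod_i x_i^{r_i}.\]
This is additive by (R1) and commutativity, and it extends $f$.

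\textbf{Uniqueness.} Any homomorphism $g\colon\bigoplus R^+\to X$ restricts on the $i$-th summand to a homomorphism $R^+\to X$ sending $1\mapsto x_i$. By $R$-locality of $X$, this restriction must be $r\mapsto x_i^r$. Hence $g=\tilde f$.

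Naturality in $A$ is clear because both constructions are functorial. I expect the only point needing care to be $R/1$-nullity for infinite direct sums, which the projection argument above settles. If needed, one could alternatively invoke Lemma~\ref{lemma:null-closed-under-extensions} for finite sums and then pass to filtered colimits, but the direct argument avoids that.
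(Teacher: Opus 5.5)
Your proof is correct and has the same two-part structure as the paper's: show that $A\otimes R\cong R^{\oplus I}$ is $R$-local, then check the universal property.

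For locality the two arguments are the same in substance. The paper uses that $R$-local groups are closed under products, so $R^{I}$ is $R$-local, and then applies Corollary~\ref{cor:R-subgroup-local} to the $R$-subgroup $R^{\oplus I}$. You apply the same products-and-subgroups reasoning to $R/1$-nullity via the coordinate projections, and then invoke Proposition~\ref{prop:r-null}.

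The genuine difference is in the universal property. The paper tests only against abelian $R$-local groups. It can do so because $L_R$ takes abelian groups to abelian groups, a fact it uses without comment. This follows, e.g., from Theorem~\ref{thm:r-local-r-groups} and Proposition~\ref{prop:commutative}, since $C_R(A)$ is generated by pairwise commuting elements. The whole verification is then the chain $\Hom(R^{\oplus I},G)\cong\Hom(R,G)^{I}\cong\Hom(\mathbb{Z},G)^{I}\cong\Hom(A,G)$. You instead test against an arbitrary $R$-local $X$. Uniqueness comes from the summands generating $R^{\oplus I}$, and existence from $R$-powers of commuting elements commuting. This is a few lines longer but self-contained, since it builds in the reason the relevant maps land in an abelian $R$-local subgroup.

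One caution about your closing aside: the alternative through finite sums and filtered colimits would not work as stated. $R/1$-nullity is not preserved by filtered colimits in general: for $R=\mathbb{Z}[1/p]$, each $\mathbb{Z}/p^n$ is $R/1$-null, but their union $R/1=\mathbb{Z}[1/p]/\mathbb{Z}$ is not. Your projection argument is what handles infinite $I$, so you were right not to rely on the aside.
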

\begin{proof}
Let $X$ be a set; we show that $\bb{Z}^{\oplus X}\to R^{\oplus X}$ is the
$R$-localization. Being reflective, the subcategory of $R$-local groups is closed
under limits, and in particular under products. Since $R$ is an $E$-ring, the
additive group $R^+$ is $R$-local, so the product $R^X$ is $R$-local as well.
Finally, $R^{\oplus X}$ is an $R$-subgroup of the $R$-local group $R^X$, and hence
is itself $R$-local (Corollary \ref{cor:R-subgroup-local}).

Because $L_R$ takes abelian groups to abelian groups, it suffices to check that
$\bb{Z}^{\oplus X}\to R^{\oplus X}$ is a universal map to an abelian $R$-local
group. Let $G$ be an $R$-local abelian group. Then
\begin{equation}
\Hom(R^{\oplus X}, G) \cong \Hom(R,G)^{X} \cong \Hom(\bb{Z},G)^X \cong \Hom(\bb{Z}^{\oplus X},G),
\end{equation}
where the middle isomorphism uses that $G$ is $R$-local. Therefore
$\bb{Z}^{\oplus X}\to R^{\oplus X}$ is the $R$-localization.
\end{proof}

\begin{definition}[Solid ring]
A commutative ring $R$ is called \emph{solid} if the multiplication map
$R\otimes_{\bb{Z}} R \to R$ is an isomorphism. For example, $\bb{Z}[P^{-1}]$ is
solid, whereas $\bb{Z}_p$ is not.
\end{definition}

\begin{lemma}
\label{lemma:solid-is-E}
Every solid ring is an $E$-ring.
\end{lemma}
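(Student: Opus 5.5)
We have to show that if $\mu: R\otimes_{\bb Z} R\to R$ is an isomorphism, then every group endomorphism $\varphi$ of $R^+$ is multiplication by $\varphi(1)$. Equivalently, we show that $\mathrm{ev}_1:\End_{\bb Z}(R^+)\to R$ is bijective. Surjectivity is immediate, since multiplication by $r$ is an endomorphism sending $1$ to $r$. So the content is injectivity: if $\varphi(1)=0$, then $\varphi=0$.

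The key input is the standard consequence of solidity that the two maps
\begin{equation}
i_1, i_2 : R\to R\otimes_{\bb Z} R,\qquad i_1(x)=x\otimes 1,\quad i_2(x)=1\otimes x,
\end{equation}
coincide. Indeed, both are right inverses of $\mu$, since $\mu(x\otimes 1)=x=\mu(1\otimes x)$. Because $\mu$ is a bijection, a right inverse is unique, so $x\otimes 1 = 1\otimes x$ for all $x\in R$.

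Now take $\varphi\in\End_{\bb Z}(R^+)$ with $\varphi(1)=0$. Apply the additive map $\mu\circ(\varphi\otimes \mathrm{id}_R): R\otimes R\to R$, which sends $a\otimes b\mapsto \varphi(a)b$. On $x\otimes 1$ it gives $\varphi(x)$. On $1\otimes x$ it gives $\varphi(1)x=0$. Since $x\otimes 1=1\otimes x$, we get $\varphi(x)=0$ for all $x$.

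More generally, the same computation gives $\varphi(x)=\varphi(1)x$ for any endomorphism $\varphi$. Hence $\mathrm{ev}_1$ is an isomorphism, with inverse $r\mapsto(\text{multiplication by } r)$, and it is a ring map for this reason. By the equivalent definition of an $E$-ring (the remark above), $R$ is an $E$-ring.

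There is no serious obstacle here. The only step needing care is the equality $x\otimes 1=1\otimes x$, which follows from injectivity of $\mu$ applied to $x\otimes1-1\otimes x$, an element in the kernel of $\mu$.
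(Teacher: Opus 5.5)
Your proof is correct and is essentially the paper's argument. The paper transports $R\otimes f$ along the multiplication isomorphism $\mu\colon R\otimes R\cong R$ to conclude that $f$ is $R$-linear, which tacitly uses $\mu^{-1}(x)=1\otimes x$ and hence $r\otimes 1=1\otimes r$; your element-wise computation with $\mu\circ(\varphi\otimes\mathrm{id}_R)$ makes that identification explicit.
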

\begin{proof}
Let $f:R^+\to R^+$ be a group homomorphism. Then $R\otimes f: R\otimes R^+\to
R\otimes R^+$ is an $R$-module homomorphism, and under the isomorphism $R\otimes
R^+\cong R^+$ (which holds since $R$ is solid) it is identified with $f$. Hence
$f$ is an $R$-module homomorphism, so $f(r)=f(1)\, r$ for all $r\in R$; that is,
$f$ is multiplication by $f(1)$.
\end{proof}

\begin{definition}[Weakly right exact functor]
Let $C$ and $D$ be categories admitting reflexive coequalizers. A functor
$F:C\to D$ is called \emph{weakly right exact} if it preserves reflexive
coequalizers. Many equivalent characterizations of the weakly right exact
endofunctors of the category of groups can be found
in~\cite[Th.~2.5]{right-exact}.
\end{definition}

\begin{proposition}
\label{prop:weakly right and solid}
Let $R$ be an $E$-ring. If $L_R$ is weakly right exact, then $R$ is solid.
\end{proposition}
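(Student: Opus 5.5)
Since $R$ is an $E$-ring, $R^+$ is $R$-local. By Lemma~\ref{lemma:localization of free abelian}, $L_R(\bb{Z}^{\oplus X})\cong R^{\oplus X}$ for every set $X$, i.e.\ $L_R$ agrees with $-\otimes R$ on free abelian groups. Solidity amounts to computing $L_R(R^+)$ in two ways and comparing.

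First, Remark~\ref{rem:L_R(R)} gives $L_R(R^+)\cong R^e\cong R$, via the identity-like map $R\to R$. Second, we present $R^+$ by a reflexive coequalizer of free abelian groups. Choose a free resolution and form the canonical reflexive presentation
\begin{equation}
\bb{Z}^{\oplus Y} \rightrightarrows \bb{Z}^{\oplus X} \to R^+ ,
\end{equation}
where the two arrows are $0$ and the relation map. To make the pair reflexive, replace $\bb{Z}^{\oplus Y}$ by $\bb{Z}^{\oplus X}\oplus \bb{Z}^{\oplus Y}$, with maps $(\mathrm{id},0)$ and $(\mathrm{id},\partial)$ and common section $x\mapsto(x,0)$. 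The coequalizer of this pair in $\Gr$ is the quotient of $\bb{Z}^{\oplus X}$ by the normal closure of $\Im\partial$, which is just $R^+$ since everything is abelian. Because $L_R$ is weakly right exact, $L_R(R^+)$ is the coequalizer in $\Gr$ of
\begin{equation}
R^{\oplus X}\oplus R^{\oplus Y}\rightrightarrows R^{\oplus X},
\end{equation}
where we use naturality of the isomorphism in Lemma~\ref{lemma:localization of free abelian} to identify $L_R$ of the maps with $\partial\otimes R$ etc. This coequalizer is $\mathrm{coker}(\partial\otimes R)$, and by right exactness of $\otimes$ it equals $R^+\otimes R$. Hence $L_R(R^+)\cong R\otimes R$.

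It remains to check that the composite isomorphism $R\otimes R\cong L_R(R^+)\cong R$ is the multiplication map. Both are induced from the coaugmentation $R^+\to L_R(R^+)$. Under the second description the coaugmentation is $r\mapsto r\otimes 1$; under the first it is the identity of $R$, since $q_R$ is an isomorphism for an $E$-ring. By the universal property, the comparison isomorphism $R\otimes R\to R$ is the unique homomorphism out of the $R$-local group $R\otimes R$ sending $r\otimes1\mapsto r$. The multiplication map $\mu$ does this, so it suffices that $\mu$ is a homomorphism, which it is. Therefore $\mu$ is an isomorphism and $R$ is solid.

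The main obstacle is naturality: we need $L_R(\partial)$ to correspond to $\partial\otimes R$ for arbitrary homomorphisms between free abelian groups, not only for the objects. This follows since both are the unique maps extending $\partial$ along the coaugmentations $\bb{Z}^{\oplus X}\to R^{\oplus X}$, and $R^{\oplus X'}$ is $R$-local. A secondary point is that the coequalizer of abelian groups computed in $\Gr$ coincides with the abelian cokernel.
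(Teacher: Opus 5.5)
Your proof is correct and follows essentially the same route as the paper: you present $R^+$ by free abelian groups, use weak right exactness together with $L_R(\bb{Z}^{\oplus X})\cong R^{\oplus X}$, and compare with right exactness of $-\otimes R$. The paper cites \cite[Lemma~3.2]{right-exact} for exactness of the localized cokernel sequence instead of building the reflexive pair by hand. Your final step, which checks via the coaugmentations that the resulting isomorphism $R\otimes R\to R$ is the multiplication map, is a worthwhile addition: the paper only records an abstract isomorphism $R\otimes R\cong R$ and leaves this identification implicit.
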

\begin{proof}
Choose an exact sequence $A\to B \to R \to 0$ with $A$ and $B$ free abelian. Since
$L_R$ is weakly right exact, the sequence $L_R(A) \to L_R(B) \to L_R(R)\to 0$ is
again exact~\cite[Lemma~3.2]{right-exact}. By
Lemma~\ref{lemma:localization of free abelian}, $L_R(A)\cong A\otimes R$ and
$L_R(B)\cong B\otimes R$, and by Remark~\ref{rem:L_R(R)}, $L_R(R)\cong R$.
Therefore the sequence $A\otimes R\to B\otimes R\to R\to 0$ is exact. Since
$-\otimes R$ is right exact, applying it to $A\to B\to R\to 0$ shows that the
cokernel of $A\otimes R\to B\otimes R$ is $R\otimes R$; comparing with the
sequence above gives $R\otimes R\cong R$. Thus $R$ is solid.
\end{proof}

\begin{corollary}
The $\bb{Z}_p$-localization is not weakly right exact.
\end{corollary}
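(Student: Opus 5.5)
By Proposition~\ref{prop:weakly right and solid}, it suffices to show that $\bb{Z}_p$ is an $E$-ring that is not solid. The $E$-ring property can be checked on endomorphisms of the additive group. The failure of solidity reduces to the standard fact that $\bb{Z}_p\otimes_{\bb{Z}}\bb{Z}_p\to\bb{Z}_p$ is not injective.

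First I show that $\bb{Z}_p$ is an $E$-ring, i.e.\ that every additive endomorphism $f$ of $\bb{Z}_p$ is multiplication by $f(1)$. Put $g(x)=f(x)-f(1)x$; it is additive and vanishes on $\bb{Z}$. For $x\in\bb{Z}_p$ and $n\ge 1$, pick $m\in\bb{Z}$ with $x-m\in p^n\bb{Z}_p$. Then $g(x)=g(x-m)\in g(p^n\bb{Z}_p)=p^n g(\bb{Z}_p)\subseteq p^n\bb{Z}_p$. Hence $g(x)\in\bigcap_n p^n\bb{Z}_p=0$. The only inputs are that $\bb{Z}$ is dense in $\bb{Z}_p$ and that $\bb{Z}_p$ has no nonzero element divisible by all powers of $p$.

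Second I show that $\bb{Z}_p$ is not solid. The multiplication map $\mu:\bb{Z}_p\otimes\bb{Z}_p\to\bb{Z}_p$ is surjective, so solidity would require it to be injective. Since $\bb{Z}_p$ is torsion-free, it is flat over $\bb{Z}$, so $\bb{Z}_p\otimes\bb{Z}_p\hookrightarrow \bb{Q}_p\otimes_{\bb{Z}}\bb{Q}_p\cong\bb{Q}_p\otimes_{\bb{Q}}\bb{Q}_p$. The field $\bb{Q}_p$ has uncountable transcendence degree over $\bb{Q}$, so pick $t\in\bb{Z}_p$ transcendental over $\bb{Q}$. The elements $1\otimes 1, 1\otimes t, t\otimes 1, t\otimes t$ are $\bb{Q}$-linearly independent in $\bb{Q}_p\otimes_{\bb{Q}}\bb{Q}_p$, because $1,t$ are linearly independent over $\bb{Q}$. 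In particular $t\otimes 1-1\otimes t\neq 0$, while $\mu(t\otimes1-1\otimes t)=0$. Therefore $\mu$ is not injective and $\bb{Z}_p$ is not solid.

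Combining the two steps, Proposition~\ref{prop:weakly right and solid} shows that $L_{\bb{Z}_p}$ is not weakly right exact. I expect the only delicate point to be the non-injectivity of $\mu$; the flatness embedding into $\bb{Q}_p\otimes_{\bb{Q}}\bb{Q}_p$ resolves it. Any $t\in\bb{Z}_p\setminus\bb{Q}$ would also work, since only $\bb{Q}$-linear independence of $1$ and $t$ is used.
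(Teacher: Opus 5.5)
Your proof is correct and follows the paper's route: the corollary is the contrapositive of Proposition~\ref{prop:weakly right and solid} applied to $R=\bb{Z}_p$, using that $\bb{Z}_p$ is an $E$-ring that is not solid. The paper only asserts these two facts, and your verifications are sound: the $p$-adic density argument for the $E$-ring property, and the non-solidity argument via rationalization, which uses the element $t\otimes 1-1\otimes t$ in the same spirit as the paper's proof for $\mathcal{E}$ in Proposition~\ref{prop:seven-saturation}.
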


\section{\texorpdfstring{$\lambda$}{lambda}-ring structures on \texorpdfstring{$E$}{E}-rings}

In this section we show that an $E$-ring admits a normed pre-$\lambda$-ring
structure if and only if it is binomial. We also show that if a commutative ring
$R$ admits a normed pre-$\lambda$-ring structure, then $R^e$ is binomial, and we
give an example of an $E$-ring that is not binomial.

\begin{proposition}
\label{prop:e-ring-pre-lambda-is-lambda}
Let $R$ be an $E$-ring. Then the following are equivalent:
\begin{enumerate}
    \item $R$ admits a normed pre-$\lambda$-ring structure;
    \item $R$ is binomial;
    \item $R$ admits a unique $\lambda$-ring structure.
\end{enumerate}
\end{proposition}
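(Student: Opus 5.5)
The plan is to prove (1)$\Rightarrow$(2), then (2)$\Rightarrow$(3) from Theorem~\ref{th:binomial-lambda}, and to note that (3)$\Rightarrow$(1) is trivial because a $\lambda$-ring is in particular a normed pre-$\lambda$-ring. For (2)$\Rightarrow$(3), Theorem~\ref{th:binomial-lambda} gives existence. For uniqueness, any $\lambda$-ring structure is a normed pre-$\lambda$-ring structure, so it is pinned down by the rigidity argument below: it forces $\lambda^n(r)=\binom{r}{n}$.

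The key tool for (1)$\Rightarrow$(2) is Corollary~\ref{cor:spoly-partial} applied to $G=R^+$. Since $R$ is an $E$-ring, $R^+$ is an abelian, hence nilpotent, $R$-local group. So a polynomial function $R\to R^+$ is determined by its restriction to $\bb{Z}\cdot 1\subseteq R$.

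First, fix a normed pre-$\lambda$-structure. Axioms ($\lambda1$)--($\lambda4$) say that $\lambda_t:R^+\to W(R)$ is a homomorphism with $\lambda_t(1)=1+t$. Hence $\lambda_t(k)=(1+t)^k$ for every $k\in\bb{Z}$, that is, $\lambda^n(k\cdot 1)=\binom{k}{n}\cdot 1$. By Proposition~\ref{prop:lambda_is_poly}, $\lambda^n$ is polynomial of degree at most $n$. Consequently, so is $r\mapsto n!\,\lambda^n(r)$.

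Second, the function $r\mapsto r(r-1)\cdots(r-n+1)$ is polynomial of degree at most $n$. Indeed, a pointwise product of polynomial maps $R\to R$ of degrees $d$ and $e$ has degree at most $d+e$. This follows by induction from the identity $D_s(\varphi\psi)=(D_s\varphi)\psi(\cdot+s)+\varphi\,(D_s\psi)$, using that multiplication is biadditive.

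These two polynomial functions agree on $\bb{Z}$, so Corollary~\ref{cor:spoly-partial} gives
\[
n!\,\lambda^n(r)=r(r-1)\cdots(r-n+1)\qquad\text{for all } r\in R,
\]
and the same argument fixes $\lambda^n$ itself once we know its values are determined. Therefore, once $R$ is shown to be $\bb{Z}$-torsion-free, $\binom{r}{n}=\lambda^n(r)\in R$ and $R$ is binomial. This also yields the uniqueness in (3).

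The main obstacle is showing that $R^+$ is torsion-free. My first attempt is the same rigidity trick with torsion-killing factors. Suppose $x\in R$ satisfies $px=0$ for a prime $p$. The polynomial map $r\mapsto x\bigl(\lambda^p(pr)-r\bigr)$ vanishes on $\bb{Z}$, because $\binom{pk}{p}\equiv k \pmod p$. Hence $x\lambda^p(pr)=xr$ for all $r$; taking $r$ to be $p$-torsion gives $xr=0$. One then wants to bootstrap, for instance with $r=1$ in suitably chosen combinations, or by varying $\lambda^{p^j}$, to get $x=x\cdot 1=0$.

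If this does not close, I would fall back on structure theory of $E$-rings. The torsion ideal splits off as a ring factor, and it is a product of rings $\bb{Z}/p^k$ with $1$ of finite order $m$. I would then derive a contradiction from $(1+t)^m=\lambda_t(m\cdot 1)=\lambda_t(0)=1$, whose $t^m$-coefficient equals $1\neq 0$. This requires checking that the pre-$\lambda$-structure is compatible enough with the idempotent splitting, which is the delicate point.
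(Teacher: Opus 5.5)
There is a genuine gap, exactly at the step you flag. Nothing in the proposal proves that $R$ is $\mathbb{Z}$-torsion-free, and torsion-freeness is part of the definition of a binomial ring. So (1)$\Rightarrow$(2) is not established.

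The rest is fine:
\begin{itemize}
\item the rigidity identity $n!\,\lambda^n(r)=r(r-1)\cdots(r-n+1)$;
\item uniqueness, since two normed pre-$\lambda$-structures give polynomial maps $R\to R^+$ that agree on $\mathbb{Z}$;
\item the implications (2)$\Rightarrow$(3)$\Rightarrow$(1).
\end{itemize}

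Your first attempt correctly yields $xr=0$ whenever $px=pr=0$, but this is not contradictory by itself. To reach $x=x\cdot 1=0$ you would need $r=1$, which is not $p$-torsion. Moreover, in a ring factor $\mathbb{Z}/p^2$ the element $x=p$ satisfies $px=x^2=0$, so the fact you derived holds there without contradiction.

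Your fallback starts from a false claim. The torsion ideal of an $E$-ring need not be a ring factor. For example, $\prod_p\mathbb{Z}/p$ is an $E$-ring whose torsion ideal $\bigoplus_p\mathbb{Z}/p$ has no unit and is not even a direct summand of the additive group. What is true is that each nonzero $p$-primary component is a ring factor $eR\cong\mathbb{Z}/p^k$ with $e^2=e$. The ``delicate point'' is also a real obstruction, not a routine check. A pre-$\lambda$-structure only sees $R^+$, so nothing tells you that $\lambda_t(e)=1+t$, and the relation $(1+t)^m=1$ is unavailable.

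The missing idea, which the paper uses, is to upgrade to a genuine $\lambda$-ring before dealing with torsion.
\begin{itemize}
\item Apply the same rigidity principle, Theorem~\ref{theorem:spoly-inclusion} for $\mathbb{Z}^m\to R^m$. It applies because $\Hom(R^m,R^+)\to\Hom(\mathbb{Z}^m,R^+)$ is bijective.
\item This shows that ($\lambda5$) and ($\lambda6$) hold on all of $R$, since they hold on integer arguments, where $\lambda^n(k)=\binom kn$.
\item The Adams operations are then additive ring homomorphisms, so the $E$-ring property forces $\psi^n(r)=\psi^n(1)\,r=r$.
\item The converse half of Theorem~\ref{th:binomial-lambda} says that a $\lambda$-ring with trivial Adams operations is binomial, torsion-freeness included. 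It also gives that $\lambda^n(r)=\binom rn$.
\end{itemize}

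If you prefer to keep your own trick, it can be completed prime by prime. Suppose $eR\cong\mathbb{Z}/N$ with $N=p^k$. By Mahler interpolation, choose an integer-valued polynomial $g(y)=\sum_n c_n\binom yn$ with $g(Nm)\equiv m\pmod N$ for all $m\in\mathbb{Z}$. Rigidity then kills the polynomial map $r\mapsto e\bigl(\sum_n c_n\lambda^n(Nr)-r\bigr)$. Evaluating at $r=e$ gives $-e=0$, a contradiction. Either way, some such additional input is required.
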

\begin{proof}
First we show that any normed pre-$\lambda$-ring structure on $R$ is automatically
a $\lambda$-ring structure; that is, it satisfies the polynomial identities
($\lambda5$) and ($\lambda6$). Since $R$ is an $E$-ring and
$L_R(\bb{Z}^m)\cong R^m$ for every $m\geq 1$
(Lemma~\ref{lemma:localization of free abelian}),
Theorem~\ref{theorem:spoly-inclusion} shows that the map
$\Poly(R^m,R) \to \Poly(\mathbb{Z}^m,R)$ is injective; it therefore suffices to
verify these identities for integer arguments. Axioms ($\lambda1$)--($\lambda4$)
give $\lambda^n(r+1) = \lambda^n(r) + \lambda^{n-1}(r)$ for all $r \in R$, from
which it follows by induction that $\lambda^n(k\cdot 1_R) = \binom{k}{n}\, 1_R$
for all $k \in \mathbb{Z}$. Thus, on integer arguments, both sides of ($\lambda5$)
and ($\lambda6$) reduce to the corresponding identities for binomial
coefficients, which hold. Hence every normed pre-$\lambda$-ring structure on $R$
is a $\lambda$-ring structure. In particular, (1) implies (3); the converse is
trivial, since every $\lambda$-ring is by definition a normed pre-$\lambda$-ring.

Now assume that $R$ is a $\lambda$-ring. Since the Adams operations $\psi^n:R\to
R$ are additive and $R$ is an $E$-ring, we have $\psi^n(r) = \psi^n(1)\, r = r$
for all $r \in R$. By Theorem~\ref{th:binomial-lambda}, $R$ is therefore binomial,
and any $\lambda$-ring structure on $R$ is given by the binomial coefficients
$\lambda^n(r) = \binom{r}{n}$. This proves that (3) implies (2). Finally, a
binomial ring carries a $\lambda$-ring structure
(Theorem~\ref{th:binomial-lambda}), which is in particular a normed
pre-$\lambda$-ring structure, so (2) implies (1).
\end{proof}

\begin{lemma}
\label{lemma:w-null}
Let $R$ be a $E$-ring. Then $W(R)$ is $R/1$-null.
\end{lemma}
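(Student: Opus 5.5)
The plan is to use the natural filtration of $W(R)$ by the subgroups
\begin{equation}
W_n(R) = 1 + t^n R[[t]], \qquad n\geq 1,
\end{equation}
so that $W(R)=W_1(R)\supseteq W_2(R)\supseteq\cdots$. The successive quotients of this filtration are copies of $R^+$, and $R^+$ is already known to be $R/1$-null. An induction along the filtration then kills any homomorphism from $R/1$.

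First, check that each $W_n(R)$ is a subgroup of the abelian group $W(R)$. Then check that the map
\begin{equation}
W_n(R)\to R^+, \qquad 1+a_nt^n+a_{n+1}t^{n+1}+\cdots \mapsto a_n,
\end{equation}
is a surjective group homomorphism with kernel $W_{n+1}(R)$. This holds because $(1+at^n+\cdots)(1+bt^n+\cdots)\equiv 1+(a+b)t^n \pmod{t^{n+1}}$. Hence $W_n(R)/W_{n+1}(R)\cong R^+$.

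Second, since $R$ is an $E$-ring, $R^+$ is $R$-local. By Proposition~\ref{prop:r-null}, $R^+$ is therefore $R/1$-null, that is, $\Hom(R/1,R^+)=0$.

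Third, let $f:R/1\to W(R)$ be a homomorphism. Assume inductively that $f$ lands in $W_n(R)$ (true for $n=1$). The composite $R/1\to W_n(R)\to W_n(R)/W_{n+1}(R)\cong R^+$ is trivial by the second step, so $f$ lands in $W_{n+1}(R)$. Consequently the image of $f$ lies in $\bigcap_n W_n(R)=\{1\}$, because a power series with constant term $1$ lying in every $W_n(R)$ has all higher coefficients zero. Thus $f$ is trivial and $W(R)$ is $R/1$-null.

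The only point requiring care is the identification of the graded pieces with $R^+$ as groups, i.e.\ that the leading-coefficient map is a homomorphism. This is the short computation above. Note that the finite stages are handled exactly as in Lemma~\ref{lemma:null-closed-under-extensions}, and the passage to the limit is replaced by the observation that $\bigcap_n W_n(R)$ is trivial. There is no real obstacle beyond this.
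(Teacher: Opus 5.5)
Your proof is correct and follows essentially the same route as the paper, which uses the same filtration $U_n=1+t^nR[[t]]$ with graded pieces $R^+$, plus closure of $R/1$-null groups under extensions and limits. The only difference is cosmetic: you work with the subgroups $U_n$ and the fact that $\bigcap_n U_n$ is trivial, whereas the paper uses the quotient tower $W(R)/U_n$ and the isomorphism $W(R)\cong\varprojlim_n W(R)/U_n$; your version makes clear that only separatedness of the filtration, not completeness, is needed.
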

\begin{proof}
	For $n\geq 1$, set $U_n=1+t^nR[[t]]$. There is a short exact sequence
	\begin{equation}
	0\to R^+ \to W(R)/U_{n+1}\to W(R)/U_n\to 0
	\end{equation}
	and an isomorphism $W(R)\cong\varprojlim_n W(R)/U_n$. The statement therefore
	follows from the fact that the class of $R/1$-null groups is closed under extensions and limits.
\end{proof}

\begin{proposition}
\label{prop:lambda-on-R^e}
Any normed pre-$\lambda$-ring structure on $R$ induces a $\lambda$-ring structure
on $R^e$ for which $q_R:R\to R^e$ is a morphism of pre-$\lambda$-rings.
\end{proposition}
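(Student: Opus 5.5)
The plan is to push the homomorphism $\lambda_t:R^+\to W(R)$ down to $R^e$ via the universal property of $q_R$, and then use Proposition~\ref{prop:e-ring-pre-lambda-is-lambda} to upgrade the result to a $\lambda$-ring structure.

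First, consider the composite group homomorphism
\begin{equation}
R^+ \xrightarrow{\lambda_t} W(R) \xrightarrow{W(q_R)} W(R^e),
\end{equation}
where $W(q_R)$ applies $q_R$ coefficientwise. This is a homomorphism by ($\lambda3$), because $q_R$ is a ring homomorphism (Proposition~\ref{prop:R->R^e-surjective}).

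By Lemma~\ref{lemma:w-null}, applied to the $E$-ring $R^e$, the group $W(R^e)$ is $R^e/1$-null. We need it to be $R/1$-null instead. Since $q_R$ is surjective, $R/1$ surjects onto $R^e/1$, but that does not directly give the needed implication. Instead I would reprove Lemma~\ref{lemma:w-null} with $R/1$ in place of $R^e/1$, using the same filtration by $U_n$. The successive quotients are copies of $(R^e)^+$, which is $R/1$-null by Corollary~\ref{cor:Re-null} (it is the $R/1$-nullification of $R^+$). The class of $R/1$-null groups is closed under extensions and limits. Hence $W(R^e)$ is $R/1$-null.

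Next I would show that the composite kills $\ker q_R = r_{R/1}(R^+)$. The composite is a homomorphism from $R^+$ into an $R/1$-null abelian group. By the universal property of the nullification $N_{R/1}(R^+)=(R^e)^+$ (Corollary~\ref{cor:Re-null}), it factors uniquely through $q_R$. The universal property is stated for $R$-groups, so I would verify it for plain abelian groups by transfinite induction on $r^\alpha_{R/1}$. At each stage, the image of any homomorphism $R/1\to R^+/r^\alpha$ maps to $1$ in the null target. A homomorphism from an $R$-group to a group that is not $R$-local need not be an $R$-homomorphism, so the induction must be phrased for the ideal generated by images, with $R$-ideal closure. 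This bookkeeping is the step I expect to be the main obstacle. A fallback is to use that the composite is determined by its restriction to the subgroup generated by $1$ modulo the kernel.

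The factorization gives a homomorphism $\Lambda_t:(R^e)^+\to W(R^e)$ with $\Lambda_t\circ q_R = W(q_R)\circ\lambda_t$. Its coefficients $\lambda^n$ on $R^e$ satisfy ($\lambda1$)--($\lambda3$), since $\Lambda_t$ is a homomorphism into power series with constant term $1$. They satisfy ($\lambda2$) and ($\lambda4$) by evaluating at $q_R(r)$ and $q_R(1)=1$, using surjectivity of $q_R$. The same compatibility shows that $q_R$ is a morphism of pre-$\lambda$-rings. Finally, $R^e$ is an $E$-ring, so Proposition~\ref{prop:e-ring-pre-lambda-is-lambda} shows that this normed pre-$\lambda$-ring structure is a $\lambda$-ring structure.
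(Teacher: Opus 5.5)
Your proposal is correct and follows the paper's proof step for step: the composite $W(q_R)\circ\lambda_t$, then $R/1$-nullity of $W(R^e)$ via the $U_n$-filtration with quotients $(R^e)^+$, then factorization through the nullification $(R^e)^+$ of $R^+$, and finally the upgrade via Proposition~\ref{prop:e-ring-pre-lambda-is-lambda}. You are in fact more careful than the paper, which cites Lemma~\ref{lemma:w-null} and Corollary~\ref{cor:Re-null} without comment; the step you flag as the main obstacle is harmless, because in an abelian $R$-group ($=R$-module) $M$ the $R$-ideals are just $R$-submodules, and the subgroup generated by the images of all homomorphisms $R/1\to M$ is already a submodule (post-compose with multiplication by $r$), so transfinite induction shows that any homomorphism from $R^+$ to an $R/1$-null group kills every stage $\mathsf{r}^\alpha_{F_R(R/1)}(R^+)$.
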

\begin{proof}
The group $(R^e)^+$ is $R/1$-null by Theorem~\ref{thm:r-local-r-groups}, so
$W(R^e)$ is $R/1$-null by Lemma~\ref{lemma:w-null}. Therefore the composite
\begin{equation}
R^+ \tox{\lambda_t} W(R) \tox{W(q_R)} W(R^e)
\end{equation}
factors uniquely through the $R/1$-nullification of $R^+$, which is $(R^e)^+$
(Corollary~\ref{cor:Re-null}). This yields a homomorphism $\lambda_t^e : (R^e)^+
\to W(R^e)$ with $\lambda_t^e \circ q_R = W(q_R) \circ \lambda_t$. The
homomorphism $\lambda_t^e$ defines a normed pre-$\lambda$-ring structure on $R^e$,
and the identity $\lambda_t^e \circ q_R = W(q_R) \circ \lambda_t$ says exactly
that $q_R$ is a morphism of pre-$\lambda$-rings. Since $R^e$ is an $E$-ring,
Proposition~\ref{prop:e-ring-pre-lambda-is-lambda} shows that this structure is a
$\lambda$-ring structure.
\end{proof}

\begin{corollary}
\label{cor:R^e-binomial}
If $R$ admits a normed pre-$\lambda$-ring structure, then $R^e$ is binomial.
\end{corollary}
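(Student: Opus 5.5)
The corollary follows by chaining the two preceding results. Suppose $R$ carries a normed pre-$\lambda$-ring structure. By Proposition~\ref{prop:lambda-on-R^e}, this structure induces a $\lambda$-ring structure on $R^e$ for which $q_R$ is a morphism of pre-$\lambda$-rings. In particular, $R^e$ admits a normed pre-$\lambda$-ring structure, since a $\lambda$-ring is by definition a normed pre-$\lambda$-ring.

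Next I will confirm that $R^e$ is an $E$-ring. Definition~\ref{def:R^e} provides this: $R^e=L_R(\bb{Z})$ carries an $E$-ring structure by \cite[Th.~3.7]{casacuberta2000structures}. By Proposition~\ref{prop:R->R^e-surjective} and Remark~\ref{rem_uniquness}, that $E$-ring structure coincides with the quotient ring structure induced by $q_R$, which is the ring structure used in Proposition~\ref{prop:lambda-on-R^e}. So there is no ambiguity about which ring we are working with.

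Finally, I apply Proposition~\ref{prop:e-ring-pre-lambda-is-lambda} to the $E$-ring $R^e$. Since it admits a normed pre-$\lambda$-ring structure (condition (1)), it is binomial (condition (2)). The binomial definition also requires $\bb{Z}$-torsion-freeness; this is supplied by Theorem~\ref{th:binomial-lambda} within that proposition, as a $\lambda$-ring with trivial Adams operations is binomial.

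The only point needing any care is the identification of ring structures in the second step. Remark~\ref{rem_uniquness} settles it directly, so I expect no real obstacle.
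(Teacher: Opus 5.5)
Your proposal is correct and matches the paper's proof, which likewise chains Proposition~\ref{prop:lambda-on-R^e} with Proposition~\ref{prop:e-ring-pre-lambda-is-lambda} applied to the $E$-ring $R^e$. Your remark identifying the quotient ring structure with the $E$-ring structure via Remark~\ref{rem_uniquness} was already established in Proposition~\ref{prop:R->R^e-surjective}, so nothing further is needed.
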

\begin{proof}
This follows from Propositions~\ref{prop:lambda-on-R^e}
and~\ref{prop:e-ring-pre-lambda-is-lambda}.
\end{proof}

\begin{proposition}
\label{prop:seven-saturation}
	Fix $\sqrt{2}\in \bb{Z}_7$, and let $\mathcal{E}$ be the $7$-saturation of
	$\bb{Z}[\sqrt{2}]$ in $\bb{Z}_7$:
\begin{equation}
\mathcal{E} = \{x \in \bb{Z}_7 \mid \exists\, n\geq 0 : 7^n x \in \bb{Z}[\sqrt{2}]\}.
\end{equation}
Then $\mathcal{E}$ is a countable, $\bb{Z}$-torsion-free $E$-ring that admits no
normed pre-$\lambda$-ring structure. Moreover, $\mathcal{E}$ is not solid.
\end{proposition}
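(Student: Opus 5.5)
We have to prove four things about $\mathcal{E}$: it is countable and $\mathbb{Z}$-torsion-free, it is an $E$-ring, it admits no normed pre-$\lambda$-ring structure, and it is not solid. The proof of the no-$\lambda$-structure claim will use the $E$-ring property.

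\emph{Countability, torsion-freeness, ring structure.} These are immediate. $\mathcal{E}\subseteq\mathbb{Z}_7$ is torsion-free, and $\mathcal{E}=\bigcup_n 7^{-n}\mathbb{Z}[\sqrt2]\cap\mathbb{Z}_7$ is countable. It is a subring because $\mathbb{Z}[\sqrt2]$ is a ring and the denominators $7^n$ multiply. Here $\sqrt2\in\mathbb{Z}_7$ exists since $3^2\equiv 2 \pmod 7$.

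\emph{$E$-ring property.} Let $f:\mathcal{E}^+\to\mathcal{E}^+$ be additive, and put $a=f(1)$ and $b=f(\sqrt2)$. The subgroup $\mathbb{Z}[\sqrt2]$ is free of rank $2$. The quotient $\mathcal{E}/\mathbb{Z}[\sqrt2]$ is $7$-torsion, and $\mathcal{E}$ is $7$-adically dense in $\mathbb{Z}_7$. I would show that $f$ is the restriction of the $\mathbb{Z}_7$-linear map $x\mapsto \alpha x$ for some $\alpha\in\mathbb{Z}_7$, as follows.
\begin{itemize}
\item Since $7^n$-divisibility is preserved by $f$, $f$ is $7$-adically continuous, where $\mathcal{E}$ carries the topology induced by $\mathbb{Z}_7$. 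To justify this, I will check that the $7$-adic valuation on $\mathcal{E}$ coincides with $7$-divisibility in $\mathcal{E}$: if $x\in\mathcal{E}$ and $x\in 7^k\mathbb{Z}_7$, then $x/7^k\in\mathcal{E}$ by saturation.
\item Hence $f$ extends to a continuous additive map $\mathbb{Z}_7\to\mathbb{Z}_7$, which is multiplication by $\alpha=f(1)=a$. So $f(x)=ax$ for all $x\in\mathcal{E}$, with $a\in\mathcal{E}$.
\end{itemize}
The main care needed is the density and continuity argument; everything else is formal. As a consistency check, this forces $b=a\sqrt2$.

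\emph{No normed pre-$\lambda$-ring structure.} By Proposition~\ref{prop:e-ring-pre-lambda-is-lambda}, for an $E$-ring this is equivalent to $\mathcal{E}$ not being binomial. So it suffices to exhibit a binomial coefficient that leaves $\mathcal{E}$, and the natural candidate is
\[
\binom{\sqrt2}{2}=\frac{\sqrt2(\sqrt2-1)}{2}=\frac{2-\sqrt2}{2}=1-\frac{\sqrt2}{2}.
\]
We have $\tfrac12\in\mathbb{Z}_7$, so this element lies in $\mathbb{Z}_7$ and we are asking whether it lies in $\mathcal{E}$. Suppose $7^n\bigl(1-\tfrac{\sqrt2}{2}\bigr)\in\mathbb{Z}[\sqrt2]$. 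Then $7^n\sqrt2/2\in\mathbb{Z}[\sqrt2]$. Comparing coefficients in the basis $1,\sqrt2$ of $\mathbb{Q}(\sqrt2)$ gives $7^n/2\in\mathbb{Z}$, which is impossible. Hence $\mathcal{E}$ is not binomial.

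\emph{Not solid.} We have $\mathcal{E}\otimes\mathbb{Q}=\mathbb{Q}(\sqrt2)$. If $\mathcal{E}\otimes\mathcal{E}\to\mathcal{E}$ were an isomorphism, rationalizing would give an isomorphism $\mathbb{Q}(\sqrt2)\otimes_{\mathbb{Q}}\mathbb{Q}(\sqrt2)\cong\mathbb{Q}(\sqrt2)$. This fails by dimension count: the left side has $\mathbb{Q}$-dimension $4$ and the right side has dimension $2$.
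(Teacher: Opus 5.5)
Your proposal is correct and follows the paper's proof essentially step by step. Saturation gives $\mathcal{E}\cap 7^k\mathbb{Z}_7=7^k\mathcal{E}$, hence continuity of every additive endomorphism, and density in $\mathbb{Z}_7$ then forces $f(x)=f(1)\,x$. Non-binomiality is witnessed by $\binom{\sqrt2}{2}=\frac{2-\sqrt2}{2}$, and non-solidity follows from the dimension count after tensoring with $\mathbb{Q}$. The only cosmetic difference is that you exclude $7^n\bigl(1-\tfrac{\sqrt2}{2}\bigr)\in\mathbb{Z}[\sqrt2]$ by comparing coefficients in the basis $1,\sqrt2$, whereas the paper reduces modulo $2$ in $\mathbb{Z}[\sqrt2]$; these amount to the same argument.
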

\begin{proof}
	An element $\sqrt{2}\in \bb{Z}_7$ exists by Hensel's lemma, since
	$3^2\equiv 2 \pmod 7$. As $\mathcal{E}$ is contained in the image of
	$\bb{Z}[1/7,\sqrt{2}]$ in $\bb{Q}_7$, it is countable, and it is
	$\bb{Z}$-torsion-free because it is a subgroup of $\bb{Z}_7$.

	We first show that $\mathcal{E}$ is an $E$-ring. The subgroup
	$\mathcal{E}\subseteq\bb{Z}_7$ is $7$-saturated, i.e.\ $\mathcal{E}\cap
	7^m\bb{Z}_7 = 7^m\mathcal{E}$: indeed, if $x\in \mathcal{E}\cap 7^m\bb{Z}_7$
	then $7^{-m}x\in\bb{Z}_7$, and if $7^n x\in\bb{Z}[\sqrt{2}]$ then
	$7^{n}(7^{-m}x)\cdot 7^m = 7^n x\in\bb{Z}[\sqrt{2}]$, so $7^{-m}x\in\mathcal{E}$.
	Consequently the subspace topology on $\mathcal{E}$ from $\bb{Z}_7$ coincides
	with its $7$-adic topology, so every endomorphism $f:\mathcal{E}^+\to
	\mathcal{E}^+$ is continuous for this topology. Since $\mathcal{E}$ is dense
	in $\bb{Z}_7$, the map $f$ extends to a continuous endomorphism
	$\hat{f}:\bb{Z}_7\to \bb{Z}_7$. As $\bb{Z}_7$ is an $E$-ring,
	$\hat f(x)=f(1)\, x$ for all $x\in \bb{Z}_7$; in particular $f(x)=f(1)\, x$
	for all $x\in \mathcal{E}$. Thus $\mathcal{E}$ is an $E$-ring.

	The element $\sqrt{2}$ lies in $\mathcal{E}$, but
	$\binom{\sqrt{2}}{2}=\frac{\sqrt 2(\sqrt 2-1)}{2}=\frac{2-\sqrt{2}}{2}$ does
	not. Indeed, otherwise $7^n(2-\sqrt{2})\in 2\,\bb{Z}[\sqrt{2}]$ for some
	$n\geq 0$; reducing modulo $2$, this forces the class of $\sqrt{2}$ to vanish
	in $\bb{Z}[\sqrt 2]/2\cong \bb{F}_2[u]/(u^2)$, which is false. Hence
	$\mathcal{E}$ is not binomial, and so by
	Proposition~\ref{prop:e-ring-pre-lambda-is-lambda} it admits no normed
	pre-$\lambda$-ring structure.

    Finally, we show that $\mathcal{E}$ is not solid. Suppose it were. Then the
    map $\mathcal{E} \to \mathcal{E}\otimes_{\bb{Z}} \mathcal{E}$, $x\mapsto
    x\otimes 1$, would be an isomorphism. Since $\mathcal{E}\otimes_{\bb{Z}}
    \bb{Q}\cong \bb{Q}[\sqrt{2}]$, tensoring with $\bb{Q}$ would make
    $\bb{Q}[\sqrt{2}] \to \bb{Q}[\sqrt{2}] \otimes_{\bb{Q}} \bb{Q}[\sqrt{2}]$,
    $x\mapsto x\otimes 1$, an isomorphism. This is impossible, since
    $\bb{Q}[\sqrt{2}]$ is a $\bb{Q}$-vector space of dimension $2$, so the
    codomain has dimension $4$. Therefore $\mathcal{E}$ is not solid.
\end{proof}

\begin{corollary}
\label{cor:E-localisation}
The $\mathcal{E}$-localization is not weakly right exact.
\end{corollary}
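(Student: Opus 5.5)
The corollary follows by combining Proposition~\ref{prop:weakly right and solid} with Proposition~\ref{prop:seven-saturation}. By Proposition~\ref{prop:seven-saturation}, $\mathcal{E}$ is an $E$-ring and is not solid. Proposition~\ref{prop:weakly right and solid} says that if $R$ is an $E$-ring and $L_R$ is weakly right exact, then $R$ is solid. Taking the contrapositive with $R=\mathcal{E}$ shows that $L_{\mathcal{E}}$ cannot be weakly right exact.

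The only point to double-check is that the hypotheses of Proposition~\ref{prop:weakly right and solid} are used exactly as stated. Its proof uses three things:
\begin{enumerate}
\item Lemma~\ref{lemma:localization of free abelian}, which needs only that $R$ is an $E$-ring;
\item Remark~\ref{rem:L_R(R)}, giving $L_R(R)\cong R^e\cong R$, again because $R$ is an $E$-ring;
\item the exactness of $L_R(A)\to L_R(B)\to L_R(R)\to 0$ for a weakly right exact $L_R$, cited from~\cite{right-exact}.
\end{enumerate}
All three apply to $\mathcal{E}$ once we know it is an $E$-ring, which Proposition~\ref{prop:seven-saturation} supplies.

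There is no real obstacle: all the substance (the $E$-ring property via $7$-adic continuity and density in $\mathbb{Z}_7$, and non-solidity via $\dim_{\mathbb{Q}}$ of $\mathbb{Q}[\sqrt2]\otimes\mathbb{Q}[\sqrt2]$) was already established in Proposition~\ref{prop:seven-saturation}.
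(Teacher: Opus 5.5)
Your proposal is correct and is exactly the paper's argument: Proposition~\ref{prop:seven-saturation} shows that $\mathcal{E}$ is an $E$-ring that is not solid, and the contrapositive of Proposition~\ref{prop:weakly right and solid} then shows that $L_{\mathcal{E}}$ is not weakly right exact. Your extra check that the hypotheses of Proposition~\ref{prop:weakly right and solid} need only the $E$-ring property is accurate, though not needed.
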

\begin{proof}
Since $\mathcal{E}$ is an $E$-ring that is not solid
(Proposition~\ref{prop:seven-saturation}), this follows from
Proposition~\ref{prop:weakly right and solid}.
\end{proof}

\section{\texorpdfstring{$R$}{R}-localization of free groups}
In this section we compute the $R$-localization of a free group, for a torsion-free $E$-ring $R$. For a set $X$,
we write $F(X)$ for the free group on $X$ and $F_R(X)$ for
the free $R$-group on $X$. 

\begin{proposition}
\label{prop:free-e-ring-groups-local}
Let $R$ be an $E$-ring such that $R^+$ is torsion-free and let $X$ be a set. Then $F_{R}(X)$
is $R$-local, and the map $F(X) \to F_{R}(X)$ induces 
isomorphisms
\begin{equation}
L_R(F(X)) \cong F_{R}(X).
\end{equation}
\end{proposition}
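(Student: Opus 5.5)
By Theorem~\ref{thm:r-local-r-groups}, $L_R(F(X))\cong U_R(N_{R/1}(C_R(F(X))))$, and $C_R(F(X))\cong F_R(X)$. So it suffices to show that $F_R(X)$ is $R$-local. Then $r_{R/1}(F_R(X))$ is trivial, and the map $F(X)\to F_R(X)$ is the $R$-localization. By Proposition~\ref{prop:r-null}, $F_R(X)$ is $R$-local exactly when it is $R/1$-null, since it already carries an $R$-group structure. The task therefore reduces to showing that every homomorphism $\varphi:R/1\to F_R(X)$ is trivial.

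I would first restrict $\varphi$ to a cyclic or abelian subgroup and use that the image is abelian. The image of $R\to R/1\to F_R(X)$ is an abelian subgroup of $F_R(X)$, consisting of pairwise commuting elements. By Proposition~\ref{prop:commutative}, it generates an abelian $R$-subgroup $A=\langle \Im\varphi\rangle_R$. The key input I would import is the structure theory of Myasnikov--Remeslennikov for free $R$-groups over torsion-free $R$. Free $R$-groups are built as iterated extensions of centralizers (a union of groups $G_{i+1}=\langle G_i, t \mid [C(u),t]=1\rangle$, tensoring centralizers with $R$). Hence centralizers of nontrivial elements are abelian and of the form $R$-modules embedded in free $R$-modules, i.e.\ of the form $u^R\otimes$ something torsion-free. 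In particular, every abelian $R$-subgroup $A$ embeds $R$-linearly into a direct sum of copies of $R$, or at least into a product $R^Y$ as an $R$-module. I expect this step to be the main obstacle, since it requires either quoting or reproving that maximal abelian $R$-subgroups of $F_R(X)$ are $R$-modules embeddable into products of $R^+$. The torsion-freeness of $R^+$ is exactly what makes the extension-of-centralizers construction well behaved: it makes $F(X)\to F_R(X)$ injective and centralizers torsion-free $R$-modules.

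Granting this, the conclusion is formal. Since $R$ is an $E$-ring, $R^+$ is $R$-local, hence $R/1$-null. Products of $R/1$-null groups are $R/1$-null, and so are subgroups, so $A\le R^Y$ is $R/1$-null. The map $\varphi$ factors through $A$, so $\varphi$ is trivial. Thus $F_R(X)$ is $R/1$-null, hence $R$-local, and the isomorphism $L_R(F(X))\cong F_R(X)$ follows from Theorem~\ref{thm:r-local-r-groups}. If the embedding into $R^Y$ is hard to extract directly, a fallback is to use the direct-limit description $F_R(X)=\varinjlim G_i$. Any homomorphism from the abelian group $R/1$ lands, elementwise, in centralizers of the form $C_{G_i}(u)\otimes_{\mathbb Z} R$ or $C_{G_i}(u)\otimes_{C}R$. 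One then checks by transfinite induction that these abelian pieces are $R/1$-null, using Lemma~\ref{lemma:null-closed-under-extensions} and the fact that free $R$-modules $R^{\oplus Y}\subseteq R^Y$ are $R/1$-null.
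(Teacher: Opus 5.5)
Your proposal follows the paper's proof: reduce, via Proposition~\ref{prop:r-null} and Theorem~\ref{thm:r-local-r-groups}, to showing that $F_R(X)$ is $R/1$-null, then place the abelian image of $R/1$ inside a centralizer that embeds into $R^+$, which is $R/1$-null because $R$ is an $E$-ring. The paper settles your ``main obstacle'' by quoting \cite[Prop.~16]{myasnikov1996exponential}: for $|X|\geq 2$ the centralizer of every nontrivial element of $F_R(X)$ is isomorphic to $R^+$, so your $R^Y$ can be taken with $Y$ a point, and $|X|\leq 1$ is immediate. It is this embedding that is needed, not mere torsion-freeness of centralizers, since e.g.\ $\mathbb{Q}_p$ is a torsion-free $\mathbb{Z}_p$-module that is not $\mathbb{Z}_p/1$-null.
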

\begin{proof}
If $X=\varnothing$ there is nothing to prove, and if $|X|=1$ then
$F_{R}(X)\cong R^+ \cong L_R(\bb{Z})$. Assume henceforth that
$|X|\geq 2$. By~\cite[Prop.~16]{myasnikov1996exponential}, the
centralizer of every nontrivial element of $F_R(X)$ is isomorphic
to $R^+$.

We first show that $F_{R}(X)$ is $R$-local. As it is an $R$-group,
it suffices, by Theorem~\ref{thm:r-local-r-groups}, to show that it is
$R/1$-null. Suppose not, and let $f: R/1\to F_{R}(X)$ be a
nontrivial homomorphism. Choose a nontrivial element $x$ in the image of $f$. Since the image is abelian it is contained in the centralizer $\mathsf{Cent}(x)\cong R^+$. Thus
$f$ corestricts to a nontrivial homomorphism $R/1 \to R^+$, which is
impossible, since $R^+$ is $R/1$-null. Therefore $F_{R}(X)$ is
$R$-local.

Now let $G$ be any $R$-local group and $\varphi:F(X)\to G$ a
homomorphism. Since $G$ is $R$-local, it admits an $R$-group structure
(Proposition~\ref{prop:R-ring_structure_on_a_local_group}), so $\varphi$ lifts uniquely to an
$R$-homomorphism $\tilde \varphi:F_{R}(X)\to G$. Moreover, by
Proposition~\ref{prop:R-ring_structure_on_a_local_group}, every homomorphism
$F_{R}(X)\to G$ is automatically an $R$-homomorphism, so
$\tilde\varphi$ is the unique homomorphism extending $\varphi$. Hence
$F_{R}(X)$ satisfies the universal property of the $R$-localization
of $F(X)$.
\end{proof}

\begin{corollary}
\label{cor:HP:free}
Let $R$ be a binomial $E$-ring and let $X$ be a set. Then $F_R(X)$ is a Hall-Petresco $R$-group and the map $F(X)\to F_R(X)$ induces an isomorphism 
\begin{equation}
C_R^{\mathrm{HP}}(F(X)) \cong F_R(X).
\end{equation}
\end{corollary}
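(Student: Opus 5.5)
The plan is to combine Proposition~\ref{prop:free-e-ring-groups-local} with Proposition~\ref{prop:local are Hall--Petresco}. A binomial ring is torsion-free by definition, and by Theorem~\ref{th:binomial-lambda} it carries a $\lambda$-ring structure with $\lambda^n(r)=\binom{r}{n}$. This structure is in particular a normed pre-$\lambda$-ring structure, so the notion of Hall--Petresco $R$-group makes sense. Since $R$ is a torsion-free $E$-ring, Proposition~\ref{prop:free-e-ring-groups-local} shows that $F_R(X)$ is $R$-local. Proposition~\ref{prop:local are Hall--Petresco} then gives that $F_R(X)$ is a Hall--Petresco $R$-group.

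For the isomorphism, I would compare universal properties. By definition, $C_R^{\mathrm{HP}}(F(X))=\mathrm{HP}(C_R(F(X)))$, and $C_R(F(X))\cong F_R(X)$. So $C_R^{\mathrm{HP}}(F(X))\cong\mathrm{HP}(F_R(X))$, where $\mathrm{HP}$ is the reflection of Remark~\ref{rem:HP-reflective}. Because $F_R(X)$ already lies in the reflective full subcategory $\RGr^{\mathrm{HP}}$, its unit $F_R(X)\to\mathrm{HP}(F_R(X))$ is an isomorphism.

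Concretely, for any Hall--Petresco $R$-group $A$ we have
\[
\Hom_{\RGr^{\mathrm{HP}}}(F_R(X),A)=\Hom_{\RGr}(F_R(X),A)\cong \Hom_{\Gr}(F(X),A),
\]
so $F(X)\to F_R(X)$ has the universal property defining $C_R^{\mathrm{HP}}(F(X))$. The resulting isomorphism is the one induced by $F(X)\to F_R(X)$.

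There is no real obstacle. The only points to check are that the hypotheses of Proposition~\ref{prop:free-e-ring-groups-local} hold (a binomial ring is torsion-free) and that Proposition~\ref{prop:local are Hall--Petresco} applies with the binomial $\lambda$-structure. That proposition was proved for any normed pre-$\lambda$-ring, so it does.
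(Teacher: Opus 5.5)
Your proposal is correct and follows the paper's proof. A binomial ring is torsion-free, so $F_R(X)$ is $R$-local; it is therefore Hall--Petresco, since every $R$-local group is. The universal property of $F_R(X)$ among $R$-groups then restricts to the full subcategory of Hall--Petresco $R$-groups, and your rephrasing via the unit of the reflector $\mathrm{HP}$ being invertible on objects of that subcategory is the same argument.
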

\begin{proof}
Since $F_R(X)$ is $R$-local, it is Hall-Petresco (Proposition \ref{prop:local are Hall--Petresco}). Then the universal property of $F_R(X)$ among $R$-groups implies the universal property among Hall-Petresco $R$-groups. 
\end{proof}

\begin{corollary}
	\label{cor:free-zp-localization-pro-p}
	The $\bb{Z}_p$-localization of $F(X)$ embeds into the pro-$p$
	completion of $F(X)$.
\end{corollary}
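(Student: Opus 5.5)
We need to show: $L_{\mathbb{Z}_p}(F(X))$ embeds into the pro-$p$ completion $\widehat{F(X)}_p$.

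The plan is to identify $L_{\bb Z_p}(F(X))$ with the free $\bb Z_p$-group $F_{\bb Z_p}(X)$ and then construct an $\bb Z_p$-homomorphism $F_{\bb Z_p}(X)\to \widehat{F(X)}_p$ that is injective.

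\textbf{Step 1: identify the localization.} The ring $\bb Z_p$ is a torsion-free $E$-ring, since every endomorphism of $\bb Z_p^+$ is continuous and hence multiplication by its value at $1$. Proposition~\ref{prop:free-e-ring-groups-local} therefore gives
\[
L_{\bb Z_p}(F(X))\cong F_{\bb Z_p}(X).
\]

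\textbf{Step 2: build the comparison map.} The pro-$p$ completion $\widehat{F(X)}_p=\varprojlim F(X)/N$ is an inverse limit of finite $p$-groups. Each finite $p$-group $P$ is $\bb Z_p$-local: an element $g$ of order $p^k$ gives a homomorphism $\bb Z_p\to \bb Z/p^k\to P$, and this homomorphism is unique. Uniqueness holds because any homomorphism $\bb Z_p\to P$ kills $p^k\bb Z_p$ on the cyclic subgroup it lands in, and $\bb Z$ is dense modulo $p^k$. The $\bb Z_p$-local groups form a reflective subcategory, so they are closed under limits, and hence $\widehat{F(X)}_p$ is $\bb Z_p$-local. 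The universal property of $L_{\bb Z_p}$ then yields a homomorphism
\[
\theta: F_{\bb Z_p}(X)\to \widehat{F(X)}_p,
\]
which is automatically a $\bb Z_p$-homomorphism by Proposition~\ref{prop:R-ring_structure_on_a_local_group}.

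\textbf{Step 3: injectivity (the main obstacle).} I would attempt this via a residual property of free $\bb Z_p$-groups: $F_{\bb Z_p}(X)$ is residually a finite $p$-group by $\bb Z_p$-homomorphisms to finite $p$-groups. Granting that, every nontrivial $w$ admits a $\bb Z_p$-homomorphism $\varphi:F_{\bb Z_p}(X)\to P$ with $\varphi(w)\ne1$. Restricting $\varphi$ to $F(X)$ gives a map that factors through some quotient $F(X)/N$, and $\varphi$ equals the composite of $\theta$ with $\widehat{F(X)}_p\to P$ by uniqueness in the universal property. Hence $\theta(w)\ne 1$.

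To establish residual finiteness I would use the Myasnikov--Remeslennikov description of $F_{\bb Z_p}(X)$ as an iterated extension of centralizers of $F(X)$, or cite Jaikin-Zapirain~\cite{jaikin2024free}, who proves that $F_{\bb Z_p}(X)$ embeds in the pro-$p$ completion. With the centralizer-extension approach, one would argue by induction on the extensions, checking that each extension of a centralizer by $\bb Z_p$ preserves being residually $p$ via $\bb Z_p$-maps; this parallels the classical argument for free groups. This step is the one I expect to be hardest, and citing~\cite{jaikin2024free} is the route I would try first.
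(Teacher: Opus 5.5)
Your proposal is correct and follows the paper's proof. The paper likewise identifies $L_{\mathbb{Z}_p}(F(X))$ with $F_{\mathbb{Z}_p}(X)$ via Proposition~\ref{prop:free-e-ring-groups-local}, using that $\mathbb{Z}_p$ is an $E$-ring, and then cites \cite[Cor.~1.3]{jaikin2024free} for the embedding into the pro-$p$ completion. Your Step~2 is correct but not needed, and the injectivity genuinely rests on that citation: the centralizer-extension induction you sketch is not a routine argument you could substitute for it.
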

\begin{proof}
	Since $\bb{Z}_p$ is an $E$-ring, this follows from Proposition~\ref{prop:free-e-ring-groups-local}
	and~\cite[Cor.~1.3]{jaikin2024free}.
\end{proof}

\begin{definition}
For a ring $R$ and a set $X$, we write $R\langle\!\langle X\rangle\!\rangle$ for
the algebra of formal power series in the non-commuting variables $X$, and we
denote by
\begin{equation}
\Delta_R(X) \subseteq R\langle\!\langle X\rangle\!\rangle
\end{equation}
the kernel of the augmentation $R\langle\!\langle X\rangle\!\rangle \to R$ sending
each variable in $X$ to zero. Then $1+\Delta_R(X)$ is a group under
multiplication.
\end{definition}

\begin{corollary}
	\label{cor:free-binomial-e-ring-magnus}
	Let $R$ be a binomial domain that is an $E$-ring, and let $X$ be a set. Then
	$L_R(F(X))$ embeds into the group $1+\Delta_R(X)$.
\end{corollary}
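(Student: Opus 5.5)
By Proposition~\ref{prop:free-e-ring-groups-local}, $L_R(F(X))\cong F_R(X)$, since a binomial domain is torsion-free. The plan is therefore to produce an injective homomorphism from $F_R(X)$ into $1+\Delta_R(X)$, namely an $R$-exponential Magnus embedding.

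\textbf{Step 1: an $R$-group structure on $1+\Delta_R(X)$.} For $u=1+a$ with $a\in\Delta_R(X)$, set
\begin{equation}
u^r=\sum_{n\ge0}\binom{r}{n}a^n .
\end{equation}
This series converges formally because $a^n$ has no terms of degree less than $n$, and it is defined because $R$ is binomial. The axioms (R1)--(R5) are identities in the coefficients, and each coefficient is a polynomial in $r$ (and $s$). In a binomial ring they reduce to the Vandermonde and Chu-type identities for $\binom{r}{n}$. Since these identities hold for $r,s\in\bb{Z}$, they hold in $\bb{Q}[r,s]$, and hence in the binomial ring. For instance, (R5) for commuting $a,b$ follows from $(1+a)(1+b)=1+(a+b+ab)$ together with the commuting binomial theorem, which is a polynomial identity in commuting variables.

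\textbf{Step 2: the candidate embedding.} The assignment $x\mapsto 1+x$ then extends uniquely to an $R$-homomorphism $\mu:F_R(X)\to 1+\Delta_R(X)$.

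\textbf{Step 3: injectivity of $\mu$.} This is the main obstacle. First I would check that $1+\Delta_R(X)$ is $R$-local; this should follow from the graded filtration argument of Lemma~\ref{lemma:w-null}. One then needs faithfulness. The approach I would try first is to reduce to Jaikin-Zapirain's results~\cite{jaikin2024free}, where for a binomial domain $R$ the free $R$-group is shown to embed into the $R$-Magnus group via exactly this exponentiation. Here the domain hypothesis is what makes the centralizer and length arguments work.

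If a self-contained argument is preferred, I would argue inductively on the Lyndon--Myasnikov--Remeslennikov description of $F_R(X)$ as an iterated free product of $F_R$ with centralizer extensions $\langle G, t\mid [C_G(u),t]\rangle\otimes R$. At each step I would show that the Magnus map remains injective by a leading-term argument in $R\langle\!\langle X\rangle\!\rangle$, using that $R$ has no zero divisors so that leading coefficients do not vanish.
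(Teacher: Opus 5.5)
Your reduction is the paper's own: a binomial ring is $\mathbb{Z}$-torsion-free, so Proposition~\ref{prop:free-e-ring-groups-local} gives $L_R(F(X))\cong F_R(X)$. Everything then rests on Jaikin-Zapirain's Magnus-type embedding $F_R(X)\hookrightarrow 1+\Delta_R(X)$. Your Steps~1--2 are correct: the binomial-series $R$-structure works, and so does the induced $R$-homomorphism. The $R$-locality of $1+\Delta_R(X)$ that you sketch is also true, but it plays no role in injectivity.

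\textbf{The gap is in Step~3.} You rely on \cite{jaikin2024free}, and the paper explicitly flags that reference as erroneous at exactly this point. The proof of \cite[Th.~5.8]{jaikin2024free}, which is the embedding theorem you need, assumes without justification that the $R$-completion $C_R$ of a nilpotent group is nilpotent. That assumption is not available for general $R$-groups. The obvious induction on the class breaks down, because the center of an $R$-group need not be an $R$-ideal (Theorem~\ref{th:example:non-ideal}). So, as written, your injectivity step rests on a flawed proof.

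The paper's proof instead cites the corrected result \cite[Th.~5.8]{jaikin2026correction}. Alternatively, as the paper indicates, the original argument should be repairable by replacing $C_R$ with $C_R^{\mathrm{HP}}$, using Proposition~\ref{prop:F^{(n-1)}(G) is nilpotent}, Corollary~\ref{cor:HP:free} and Lemma~\ref{lemma:linfty-hall-completion}.

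\textbf{Your self-contained fallback does not close the gap either.} It is only an outline, and its one substantive claim is where all the work lies. Elements of $F_R(X)$ are built from $R$-powers $u^r$ of arbitrary elements $u$, not just of generators. The images $\sum_n\binom{r}{n}(\mu(u)-1)^n$ of these syllables can have cancelling lowest-degree parts in products; already $\mu([x^r,y^s])$ has no degree-one term. Saying ``a leading-term argument using that $R$ is a domain'' therefore asserts the conclusion rather than proving it. Also, a centralizer extension of a free-type group is the amalgamated product $G*_{C}(C\otimes_{\mathbb{Z}}R)$ with $C=C_G(u)$, not the group you wrote.
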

\begin{proof}
This follows from Proposition~\ref{prop:free-e-ring-groups-local}
and~\cite[Th.~5.8]{jaikin2026correction}.
\end{proof}

\section{\texorpdfstring{$R$}{R}-localization of torsion-free nilpotent groups}

In this section we describe the $R$-localization of a finitely generated
torsion-free nilpotent group in terms of Mal'cev bases.

\begin{definition}[Poly-infinite cyclic central series]
Let $G$ be a finitely generated torsion-free nilpotent group. A \emph{poly-infinite
cyclic central series} in $G$ is a central series
\begin{equation}
G=G_1 \supset G_2 \supset \dots \supset G_{k+1} = 1
\end{equation}
with $G_i/G_{i+1}\cong \bb{Z}$ for all $i$. Every finitely generated torsion-free
nilpotent group has such a series
(see~\cite[Th.~4.5]{clement_theory_2017}).
\end{definition}

\begin{definition}[Mal'cev basis]
Let $G$ be a finitely generated torsion-free nilpotent group. A \emph{Mal'cev
basis} of $G$ is a generating set $u_1,\dots,u_k\in G$ for which the subgroups
$G_i=\langle u_i,\dots,u_k \rangle$ form a poly-infinite cyclic central series.
Every poly-infinite cyclic central series arises from a Mal'cev basis in this
way. Each element $g\in G$ then has a unique expression $g=u_1^{a_1}\cdots
u_k^{a_k}$ with $a_1,\dots,a_k\in \bb{Z}$, and the isomorphism
$G_i/G_{i+1}\cong \bb{Z}$ is given by $u_i^{a_i}\cdots u_k^{a_k}\mapsto a_i$.
Moreover, there are integer-valued polynomials $p_1,\dots,p_k\in
\bb{Q}[x_1,\dots,x_k,y_1,\dots,y_k]$ describing the product,
\begin{equation}
(u_1^{a_1}\cdots u_k^{a_k}) \cdot (u_1^{A_{1,3}} \cdots u_k^{\beta_k})
=
u_1^{p_1(\bar a,\bar b)} \cdots u_k^{p_k(\bar a,\bar b)},
\end{equation}
and integer-valued polynomials $q_1,\dots,q_k\in \bb{Q}[x_1,\dots,x_k,z]$
describing the powers,
\begin{equation}
(u_1^{a_1} \cdots u_k^{a_k})^m = u_1^{q_1(\bar a, m)} \cdots u_k^{q_k(\bar a,m)}
\qquad (m\in \bb{Z})
\end{equation}
(see~\cite[Th.~4.9]{clement_theory_2017}).
\end{definition}

\begin{definition}[Hall $R$-completion]
\label{def:Hall completion}
Let $R$ be a binomial ring and let $G$ be a finitely generated torsion-free
nilpotent group with Mal'cev basis $u_1,\dots,u_k$. Since the polynomials $p_i$
and $q_i$ are integer-valued, each can be written as an integral linear
combination of products of binomial polynomials, and hence defines polynomial
maps $\tilde p_i:R^{2k} \to R$ and $\tilde q_i:R^{k+1}\to R$. The \emph{Hall
$R$-completion} of $G$ is the group $H_R(G)$ whose elements are the formal products
$u_1^{r_1}\cdots u_k^{r_k}$ with $r_i\in R$, with multiplication
\begin{equation}
(u_1^{r_1}\cdots u_k^{r_k}) \cdot (u_1^{s_1} \cdots u_k^{s_k})
=
u_1^{\tilde p_1(\bar r,\bar s)} \cdots u_k^{\tilde p_k(\bar r,\bar s)}
\end{equation}
(see~\cite[Th.~4.11]{clement_theory_2017}). Then $H_R(G)$ is an $R$-group, with
$R$-power operation
\begin{equation}
(u_1^{r_1} \cdots u_k^{r_k})^s = u_1^{\tilde q_1(\bar r, s)} \cdots u_k^{\tilde q_k(\bar r,s)}.
\end{equation}
Moreover, $H_R(G)$ has a central series given by the subgroups
\begin{equation}
\tilde G_i=\{u_i^{r_i} \cdots u_k^{r_k} \mid r_i,\dots,r_k\in R \}, \qquad 1\leq i\leq k+1,
\end{equation}
for which $\tilde G_i/\tilde G_{i+1} \to R^+$, $u_i^{r_i} \cdots u_k^{r_k}\mapsto
r_i$, is an isomorphism of $R$-groups.
\end{definition}

\begin{theorem}[{\cite[Th.~4.23]{clement_theory_2017}}]
\label{th:Hall}
Let $R$ be a binomial ring and let $G$ be a finitely generated torsion-free
nilpotent group. Then $H_R(G)$ is $R$-powered nilpotent in the sense of Hall, and
the homomorphism
\begin{equation}
G \to H_R(G), \qquad u_i\mapsto u_i,
\end{equation}
is the universal homomorphism from $G$ to an $R$-powered nilpotent group in the
sense of Hall.
\end{theorem}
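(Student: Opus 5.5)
The theorem is cited from \cite[Th.~4.23]{clement_theory_2017}, but I would reprove it in the paper's own language. By Remark~\ref{rem:R-powered}, the claim is equivalent to two things. First, $H_R(G)$ is a nilpotent Hall--Petresco $R$-group. Second, $G\to H_R(G)$ is universal among homomorphisms from $G$ to nilpotent Hall--Petresco $R$-groups. I would prove these in that order, inducting on the length $k$ of the Mal'cev basis throughout.

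For the structural part, nilpotency is immediate from the central series $\tilde G_i$ of Definition~\ref{def:Hall completion}. To check the Hall--Petresco identity~\eqref{eq:HP-lambda} in $H_R(G)$, I would use the polynomial-extension idea from Proposition~\ref{prop:local are Hall--Petresco}, now in coordinates.
\begin{itemize}
\item Fix $g_1,\dots,g_m\in H_R(G)$ and write $g_j=u_1^{r_{j1}}\cdots u_k^{r_{jk}}$.
\item Each Mal'cev coordinate of either side of~\eqref{eq:HP-lambda} is given by a universal integer-valued polynomial in the $r_{ji}$ and $r$, built from the $\tilde p_i$, the $\tilde q_i$ and $\lambda^n(r)=\binom{r}{n}$.
\item Both sides agree whenever all arguments are integers, since then they lie in $G$ and the classical identity holds there.
\item Two integer-valued polynomials in $\bb{Q}[\dots]$ that agree on $\bb{Z}^N$ are equal. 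Expanding both in the binomial basis therefore gives the same expression, so they agree on $R^N$.
\end{itemize}
The same density argument gives the $R$-group axioms (R1)--(R5) for $H_R(G)$, which the paper asserts in Definition~\ref{def:Hall completion}.

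For universality, let $A$ be a nilpotent Hall--Petresco $R$-group and $\varphi:G\to A$ a homomorphism. There is only one possible extension, $\tilde\varphi(u_1^{r_1}\cdots u_k^{r_k})=\varphi(u_1)^{r_1}\cdots\varphi(u_k)^{r_k}$, because the $u_i$ generate $H_R(G)$ as an $R$-group. The task is to show that this map is a homomorphism and respects $R$-powers. I would show that in $A$ the products $\varphi(u_1)^{r_1}\cdots\varphi(u_k)^{r_k}$ multiply and exponentiate according to the same polynomials $\tilde p_i,\tilde q_i$. I would do this by induction on $k$, peeling off the central cyclic factor generated by $u_k$.
\begin{itemize}
\item In the quotient by the $R$-subgroup generated by $\varphi(u_k)$, the inductive hypothesis applies. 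This needs that subgroup to be an $R$-ideal: in a Hall--Petresco group, central elements behave as in Proposition~\ref{prop:quotient of Hall--Petresco by Z}.
\item It remains to compare the last coordinate, which is an element of $\langle\varphi(u_k)\rangle_R$. This requires the collection process in $A$ with $R$-exponents.
\end{itemize}

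The main obstacle is exactly this collection step. We must show that $x^r y^s$ and $(x^ry^s)^t$ in $A$ are expressed by the same integer-valued polynomials as over $\bb{Z}$. Only the Hall--Petresco identity is available for this, together with the consequence $(x^y)^r=(x^r)^y$ and its commutator variants such as $[x^r,y]$. My first attempt would be to derive, by induction on nilpotency class, the $R$-versions of $[x^r,y]=\prod_n\tau'_n(x,y)^{\binom{r}{n}}$ from the Hall--Petresco identity applied to $x^{-1}$ and $x^{y}$. I would then argue that all coordinates are forced by these formulas.
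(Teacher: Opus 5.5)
The paper does not prove this statement; it imports it from \cite[Th.~4.23]{clement_theory_2017}. So your proposal has to stand on its own, and its universality half does not.

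Your structural half is fine, up to two small points. First, (R5) is a quasi-identity, so the density argument does not apply to it verbatim. It does follow from the two-variable version of \eqref{eq:HP-lambda}, since $\tau_j(x,y)\in\gamma_j(\langle x,y\rangle)=1$ for $j\ge 2$ when $xy=yx$. Second, the same fact, $\tau_j(\bar g)\in\gamma_j(\langle \bar g\rangle)$, is what lets you pass from the identity with $n$ equal to the class of $G$ to arbitrary $n$.

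\textbf{The gap.} In the universality part there is first a slip: $\varphi(u_k)$ need not be central in $A$, so $\langle\varphi(u_k)\rangle_R$ need not even be normal there. You must pass to $B=\langle\varphi(G)\rangle_R$, where centrality follows from Lemma~\ref{lemma:centralizer} as in Lemma~\ref{lemma:center_to_center}.

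The real gap comes after that. Induction only shows that $\tilde\varphi(x)\tilde\varphi(y)$ and $\tilde\varphi(xy)$ differ by an element of $\varphi(u_k)^R$, and nothing in the proposal shows this element is trivial. ``All coordinates are forced'' is exactly the content of the theorem, and the $R$-commutator formulas you would need are never derived. You also cannot fall back on a density argument inside $A$, for two reasons:
\begin{itemize}
\item $B$ need not be $R/1$-null, so Theorem~\ref{theorem:spoly-inclusion} does not apply.
\item $\varphi(u_k)$ may have nonzero annihilator in $R$, so there is no well-defined ``last coordinate'' to compare.
\end{itemize}

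\textbf{How to close it.} You can bypass the collection step entirely by working in the universal object instead of an arbitrary $A$.
\begin{itemize}
\item Let $C=C^{\mathrm{HP}}_R(G)$ with unit $\eta$. Its existence (Remark~\ref{rem:HP-reflective}) does not depend on Theorem~\ref{th:Hall}.
\item Since $u_k$ is central in $G$ and $\eta(G)$ generates $C$ as an $R$-group, $V=\eta(u_k)^R$ is central in $C$.
\item The argument of Proposition~\ref{prop:quotient of Hall--Petresco by Z}, with $Z(G)$ replaced by $V$, shows that $V$ is an $R$-ideal and that $C/V$ is Hall--Petresco.
\item Comparing universal properties gives $C/V\cong C^{\mathrm{HP}}_R(G/\langle u_k\rangle)$.
\item Induction on $k$ then shows that every element of $C$ equals $\eta(u_1)^{r_1}\cdots\eta(u_k)^{r_k}$ for some $r_i\in R$.
\item Your structural half gives an $R$-homomorphism $C\to H_R(G)$. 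It sends this element to the product of the $R$-powers $u_i^{r_i}$, which in $H_R(G)$ is the normal form $u_1^{r_1}\cdots u_k^{r_k}$. Uniqueness of normal forms in $H_R(G)$ therefore makes the map bijective.
\end{itemize}
Hence $G\to H_R(G)$ inherits the universal property of $C$, which is universal even among all Hall--Petresco $R$-groups. This reverses the paper's order, which deduces Lemma~\ref{lemma:linfty-hall-completion} from Theorem~\ref{th:Hall}. It is not circular, and it needs no collection at all.
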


\begin{example}
If $R = \bb{Z}_p$ is the ring of $p$-adic integers, then $H_{\bb{Z}_p}(G)$
coincides with the $p$-adic completion of
$G$~\cite[p.~28]{clement_theory_2017}.
\end{example}

\begin{lemma}
	\label{lemma:linfty-hall-completion}
	Let $R$ be a binomial ring and let $G$ be a finitely generated torsion-free
	nilpotent group. Then $H_R(G)$ is a Hall--Petresco $R$-group, and the map
	$G\to H_R(G)$ induces an isomorphism
    \begin{equation}
    C^{\mathrm{HP}}_R(G) \cong H_R(G).
    \end{equation}
\end{lemma}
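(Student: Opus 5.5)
The plan has two parts. First we show that $H_R(G)$ is a Hall--Petresco $R$-group. Then we compare universal properties, using Theorem~\ref{th:Hall} together with Remark~\ref{rem:R-powered} and Proposition~\ref{prop:F^{(n-1)}(G) is nilpotent}.

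\emph{Step 1: $H_R(G)$ is Hall--Petresco.} By Theorem~\ref{th:Hall}, $H_R(G)$ is nilpotent and $R$-powered in the sense of Hall. By Remark~\ref{rem:R-powered}, a nilpotent Hall--Petresco $R$-group is the same thing as a nilpotent $R$-powered group in Hall's sense. We read this identification in both directions: Hall's axioms include the Hall--Petresco identity with binomial exponents $\binom{r}{k}=\lambda^k(r)$ for elements generating a subgroup of class at most $n$. If the remark is only taken in one direction, we argue directly instead. For $g_1,\dots,g_m\in H_R(G)$, the coordinates of both sides of \eqref{eq:HP-lambda} in the Mal'cev basis are given by the maps $\tilde p_i,\tilde q_i$. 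These are integral combinations of products of binomial polynomials, so they are polynomial identities in $r$ that hold for $r\in\bb Z$. An identity between binomial-coefficient polynomials that holds on $\bb Z$ holds formally, hence in any binomial ring.

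There is one subtlety. \eqref{eq:HP-lambda} is only required when $\langle g_1,\dots,g_m\rangle_R$ has class at most $n$, and the identity in that form, with $n$ factors, must follow from Hall's $R$-powered identity for that class. This is handled by applying Hall's theory inside the $R$-subgroup $\langle \bar g\rangle_R$, which is itself $R$-powered nilpotent of class at most $n$.

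\emph{Step 2: universal property.} Since $H_R(G)$ is a Hall--Petresco $R$-group, the map $G\to H_R(G)$ induces an $R$-homomorphism $\alpha:C^{\mathrm{HP}}_R(G)\to H_R(G)$. In the other direction, $G$ is nilpotent, say of class at most $n$, so $C^{\mathrm{HP}}_R(G)$ is nilpotent of class at most $n$ by Proposition~\ref{prop:F^{(n-1)}(G) is nilpotent}. It is therefore a nilpotent Hall--Petresco $R$-group, i.e.\ $R$-powered in Hall's sense by Remark~\ref{rem:R-powered}. The universal property in Theorem~\ref{th:Hall} then gives an $R$-homomorphism $\beta:H_R(G)\to C^{\mathrm{HP}}_R(G)$ extending $\eta$.

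Both composites restrict to the identity on the image of $G$. Each side is generated as an $R$-group by the image of $G$: for $C^{\mathrm{HP}}_R(G)$ this holds because it is a quotient of $C_R(G)$, and for $H_R(G)$ because $u_1^{r_1}\cdots u_k^{r_k}$ is a product of $R$-powers of the $u_i$. Hence both composites are identities, by the uniqueness parts of the two universal properties.

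One point needs checking: morphisms of $R$-powered groups in Hall's sense are exactly the $R$-homomorphisms. This holds because Hall's category is the full subcategory of $R$-groups defined by conditions on the operations.

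The main obstacle is Step 1, namely making the translation between Hall's axioms and Definition~\ref{def:hall-petreco} precise, including the class condition.
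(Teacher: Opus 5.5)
Your proposal is correct and follows the same route as the paper. The Hall--Petresco property of $H_R(G)$ comes from Theorem~\ref{th:Hall} together with Remark~\ref{rem:R-powered}; the nilpotency of $C^{\mathrm{HP}}_R(G)$ places it in Hall's category; and the isomorphism follows by comparing the two universal properties. Your fallback Mal'cev-coordinate argument and your class-reduction point (the factors $\tau_j(\bar g)$ with $j>n$ are trivial) are sound but not needed, since the remark is stated as an equivalence.
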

\begin{proof}
Since $R$ is binomial, a nilpotent Hall--Petresco $R$-group is the same as an
$R$-powered nilpotent group in the sense of Hall
(Remark~\ref{rem:R-powered}). By
Proposition~\ref{prop:F^{(n-1)}(G) is nilpotent}, $C^{\mathrm{HP}}_R(G)$ is
nilpotent, hence such a group; the isomorphism then follows by comparing the
universal properties of $C^{\mathrm{HP}}_R(G)$ and $H_R(G)$
(Theorem~\ref{th:Hall}).
\end{proof}

\begin{proposition}
	\label{prop:e-ring-localization-hall-completion}
	Let $R$ be a commutative ring admitting a normed pre-$\lambda$-ring structure,
	and let $G$ be a finitely generated torsion-free nilpotent group. Then $R^e$
	is binomial, $H_{R^e}(G)$ is $R$-local, and the map $G\to H_{R^e}(G)$ induces
	an isomorphism
    \begin{equation}
    L_R(G) \cong H_{R^e}(G).
    \end{equation}
\end{proposition}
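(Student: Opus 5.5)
Binomiality of $R^e$ is Corollary~\ref{cor:R^e-binomial}. For the rest, the plan is to reduce to $R^e$ and then use $R$-local $=$ $R^e$-local $+$ $R/1$-null (Proposition~\ref{prop:epi-L_E->L_R}).

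\emph{Step 1: $H_{R^e}(G)$ is $R^e$-local.} Since $R^e$ is an $E$-ring, $(R^e)^+$ is $R^e$-local. Use the central series $\tilde G_i$ of Definition~\ref{def:Hall completion}, whose factors are $R^e$-isomorphic to $(R^e)^+$. Each $\tilde G_i$ is an $R^e$-group, and I would show by descending induction on $i$ that it is $R^e/1$-null. The base case is trivial. For the inductive step, the class of null groups is closed under extensions (the argument of Lemma~\ref{lemma:null-closed-under-extensions} works verbatim in $\Gr$), applied to $1\to\tilde G_{i+1}\to \tilde G_i\to (R^e)^+\to 1$. Proposition~\ref{prop:r-null} then gives that $H_{R^e}(G)$ is $R^e$-local.

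\emph{Step 2: $H_{R^e}(G)$ is $R/1$-null, hence $R$-local.} The same extension argument applies, since $(R^e)^+$ is $R/1$-null by Theorem~\ref{thm:r-local-r-groups} (it is $R$-local). Together with Step 1 and Proposition~\ref{prop:epi-L_E->L_R}, this shows that $H_{R^e}(G)$ is $R$-local.

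\emph{Step 3: universal property.} Let $X$ be $R$-local and $\varphi:G\to X$ a homomorphism. Then $X$ is $R^e$-local (Proposition~\ref{prop:epi-L_E->L_R}), hence a Hall--Petresco $R^e$-group by Proposition~\ref{prop:local are Hall--Petresco}. By Lemma~\ref{lemma:linfty-hall-completion}, $H_{R^e}(G)\cong C^{\mathrm{HP}}_{R^e}(G)$, so $\varphi$ extends uniquely to an $R^e$-homomorphism $H_{R^e}(G)\to X$.

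Uniqueness among all group homomorphisms follows from Proposition~\ref{prop:R-ring_structure_on_a_local_group}(2). Here $X$ is $R^e$-local, so every homomorphism $H_{R^e}(G)\to X$ is automatically an $R^e$-homomorphism; two extensions therefore coincide. Hence $G\to H_{R^e}(G)$ is the $R$-localization.

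The main obstacle is Step 1. One must confirm that the given $R^e$-group structure on $H_{R^e}(G)$ is a genuine $R^e$-group, which is Theorem~\ref{th:Hall}. One must also check that "null" is preserved along the central series, where the extension closure lemma is needed in $\Gr$ rather than in $\RGr$. Its proof is the same diagram chase, since any homomorphism $R^e/1\to\tilde G_i$ projects to a homomorphism into a null quotient.
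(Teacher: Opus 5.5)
Your proof is correct and rests on the same ingredients as the paper's: the central series of $H_{R^e}(G)$ with factors $(R^e)^+$, closure of null groups under extensions, $C^{\mathrm{HP}}_{R^e}(G)\cong H_{R^e}(G)$, and the fact that $R$-local groups are $R^e$-local and hence Hall--Petresco. The differences are only in packaging. The paper obtains $R^e/1$-nullness from $R/1$-nullness via the surjection $R/1\to R^e/1$ induced by $q_R$, rather than by a second extension argument. It then identifies $L_{R^e}(G)\cong H_{R^e}(G)$ via Corollary~\ref{cor:L_R is a quotient of HP} and concludes with $L_R(G)\cong N_{R/1}(L_{R^e}(G))$ from Proposition~\ref{prop:epi-L_E->L_R}, whereas you verify the universal property of $L_R$ directly.
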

\begin{proof}
By Corollary~\ref{cor:R^e-binomial}, $R^e$ is binomial. The group $H_{R^e}(G)$ has
a central series whose successive quotients are isomorphic to $(R^e)^+$. Since
$(R^e)^+$ is $R/1$-null and the class of $R/1$-null groups is closed under
extensions (Lemma~\ref{lemma:null-closed-under-extensions}), $H_{R^e}(G)$ is
$R/1$-null.

By Lemma~\ref{lemma:linfty-hall-completion}, $C_{R^e}^{\mathrm{HP}}(G) \cong
H_{R^e}(G)$, so $C_{R^e}^{\mathrm{HP}}(G)$ is $R/1$-null. Now $H_{R^e}(G)$ is an
$R^e$-group that is $R/1$-null; since the homomorphism $R^e/1 \to R/1$ induced by
$q_R$ is surjective (as $q_R$ is, by
Proposition~\ref{prop:R->R^e-surjective}), $H_{R^e}(G)$ is also $R^e/1$-null.
Hence $H_{R^e}(G)$ is $R^e$-local by Proposition~\ref{prop:r-null}, and
Corollary~\ref{cor:L_R is a quotient of HP} gives $L_{R^e}(G)\cong H_{R^e}(G)$.
Since this group is $R/1$-null, Proposition~\ref{prop:epi-L_E->L_R} yields
$L_R(G)\cong N_{R/1}(L_{R^e}(G)) \cong H_{R^e}(G)$.
\end{proof}

\begin{corollary}
Let $R$ be a binomial $E$-ring and let $G$ be a finitely generated torsion-free
nilpotent group. Then $H_{R}(G)$ is $R$-local, and the map $G\to H_{R}(G)$ induces
an isomorphism
    \begin{equation}
    L_R(G) \cong H_{R}(G).
    \end{equation}
\end{corollary}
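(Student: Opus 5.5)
This is a direct specialization of Proposition~\ref{prop:e-ring-localization-hall-completion}, so the plan is to check its hypotheses and then identify $R^e$ with $R$.

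First, $R$ admits a normed pre-$\lambda$-ring structure. Since $R$ is binomial, Theorem~\ref{th:binomial-lambda} gives a $\lambda$-ring structure with $\lambda^n(r)=\binom{r}{n}$. By definition a $\lambda$-ring is in particular a normed pre-$\lambda$-ring. So Proposition~\ref{prop:e-ring-localization-hall-completion} applies to $R$ and $G$: it says $H_{R^e}(G)$ is $R$-local and that $G\to H_{R^e}(G)$ induces an isomorphism $L_R(G)\cong H_{R^e}(G)$.

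Second, since $R$ is an $E$-ring, the remark following Corollary~\ref{cor:Re-null} shows that $q_R:R\to R^e$ is a ring isomorphism. Indeed, $R^+$ is $R$-local, hence $R/1$-null, so $r_{R/1}(R^+)=0$. Transporting along $q_R$ identifies $H_{R^e}(G)$ with $H_R(G)$ compatibly with the maps from $G$. This works because the Hall completion is built from the Mal'cev basis and the integer-valued polynomials $\tilde p_i,\tilde q_i$ evaluated in the ring, and $q_R$ is a ring isomorphism sending $1$ to $1$; so $u_1^{r_1}\cdots u_k^{r_k}\mapsto u_1^{q_R(r_1)}\cdots u_k^{q_R(r_k)}$ is a group isomorphism fixing each $u_i$. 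Hence $H_R(G)$ is $R$-local and $L_R(G)\cong H_R(G)$ via $G\to H_R(G)$.

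The only point needing care is naturality of $H_{(-)}(G)$ under ring isomorphisms. It is immediate from the formula in Definition~\ref{def:Hall completion}, because binomial coefficients are preserved by ring isomorphisms of torsion-free rings. Alternatively, one can skip the transport and argue directly. $H_R(G)$ has a central series with quotients $R^+$, which is $R/1$-null, so $H_R(G)$ is $R/1$-null by Lemma~\ref{lemma:null-closed-under-extensions}. It is therefore $R$-local by Proposition~\ref{prop:r-null}. Then Lemma~\ref{lemma:linfty-hall-completion} together with Corollary~\ref{cor:L_R is a quotient of HP} gives $L_R(G)\cong H_R(G)$.
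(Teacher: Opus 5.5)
Your proposal is correct and follows the paper's own proof. Like the paper, you note that a binomial ring carries a normed pre-$\lambda$-ring structure, use that $q_R\colon R\to R^e$ is an isomorphism for an $E$-ring, and specialize Proposition~\ref{prop:e-ring-localization-hall-completion}; your explicit transport of $H_{R^e}(G)$ to $H_R(G)$ along $q_R$ simply spells out the paper's phrase ``the special case $R=R^e$'', and your alternative direct argument is that proposition's proof with $R^e$ replaced by $R$.
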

\begin{proof}
A binomial $E$-ring admits a normed pre-$\lambda$-ring structure
(Proposition~\ref{prop:e-ring-pre-lambda-is-lambda}) and satisfies $R\cong R^e$,
since it is an $E$-ring. The claim is therefore the special case $R=R^e$ of
Proposition~\ref{prop:e-ring-localization-hall-completion}.
\end{proof}

\section{Twists of \texorpdfstring{$R$}{}-group structures} 

In this section, we introduce a method for modifying $R$-group structures.
The method uses derivations and twisting maps to twist the original
exponentiation operations, producing new $R$-group structures on the same
underlying group. As an application, we construct a nilpotent
$\bb Z_p$-group of class $3$ whose center is not a $\bb Z_p$-ideal.

\begin{definition}[Derivation]
Let $M$ be an $R$-module. A function $\delta:R\to M$ is called a derivation
if it is a homomorphism of additive groups and satisfies the Leibniz rule
$\delta(rr')=r\delta(r')+\delta(r)r'$
for all $r,r'\in R$. 
\end{definition}

\begin{lemma}
\label{lemma:non-trivial_derivation} 
There is a nontrivial derivation $\bb Q_p\to \bb Q_p$ and a nontrivial
derivation $\bb Z_p\to \bb Q_p$.
\end{lemma}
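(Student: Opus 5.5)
The plan is to build the derivation on the field $\bb Q_p$ first, using transcendence over $\bb Q$, and then restrict it to $\bb Z_p$. The basic fact is that $\bb Q_p$ is uncountable while $\bb Q$ and the algebraic closure of $\bb Q$ in $\bb Q_p$ are countable, so $\bb Q_p$ has infinite (indeed uncountable) transcendence degree over $\bb Q$.

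\textbf{Step 1: transcendence basis and a derivation on $K_0$.} Using Zorn's lemma, choose a transcendence basis $T$ of $\bb Q_p$ over $\bb Q$, and fix an element $t_0\in T$. Put $K_0=\bb Q(T)$, the purely transcendental subfield. Define $\delta_0:K_0\to \bb Q_p$ to be the partial derivative $\partial/\partial t_0$, that is, the unique $\bb Q$-linear derivation with $\delta_0(t_0)=1$ and $\delta_0(t)=0$ for $t\in T\setminus\{t_0\}$. Its existence is routine: define it on polynomials in the variables $T$, then extend to fractions by the quotient rule.

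\textbf{Step 2: extension to all of $\bb Q_p$.} The extension $\bb Q_p/K_0$ is algebraic, and it is separable because we are in characteristic zero. A derivation $\delta:K\to M$ into a module over a larger field extends uniquely across a finite separable extension $K(\alpha)/K$. Concretely, let $f$ be the minimal polynomial of $\alpha$ and $f^\delta$ the polynomial obtained by applying $\delta$ to its coefficients. Since $f'(\alpha)\neq 0$, we may set
\[
\delta(\alpha)=-\frac{f^\delta(\alpha)}{f'(\alpha)},
\]
and this defines the extension consistently on $K(\alpha)\cong K[x]/(f)$. By Zorn's lemma applied to pairs (intermediate field, extension of $\delta_0$), ordered by extension, we obtain a derivation $\delta:\bb Q_p\to\bb Q_p$. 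It is nontrivial because $\delta(t_0)=1$.

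\textbf{Step 3: restriction to $\bb Z_p$.} Restricting $\delta$ to $\bb Z_p$ gives a derivation $\bb Z_p\to \bb Q_p$, where $\bb Q_p$ is regarded as a $\bb Z_p$-module. To see that this restriction is nontrivial, multiply $t_0$ by a suitable power of $p$ to get an element $p^kt_0\in\bb Z_p$. Since $\delta$ kills $\bb Q$, we have $\delta(p^kt_0)=p^k\neq 0$.

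The main obstacle is the extension in Step 2. The points to check are well-definedness, namely independence of the representation modulo $f$, and compatibility along chains in the Zorn argument. Compatibility holds because the extension at each stage is unique, so a union of a chain of compatible derivations is again a derivation.
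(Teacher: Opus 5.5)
Your proof is correct and is essentially the paper's argument: positive transcendence degree of $\bb Q_p$ over $\bb Q$ gives a nonzero derivation of $\bb Q_p$, and this derivation stays nonzero on $\bb Z_p$ after you multiply an element it does not kill by a power of $p$. The only difference is that you construct the derivation by hand ($\partial/\partial t_0$ on $\bb Q(T)$, extended along the separable algebraic extension $\bb Q_p/\bb Q(T)$ by Zorn's lemma), whereas the paper cites $\mathrm{Der}_{\bb Q}(\bb Q_p,\bb Q_p)\cong\Hom_{\bb Q_p}(\Omega_{\bb Q_p/\bb Q},\bb Q_p)$ together with Matsumura's Theorem~26.5, which rests on the same extension-of-derivations argument.
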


\begin{proof}
Let $\Omega_{\bb Q_p/\bb Q}$ be the module of Kähler differentials. Since
$\operatorname{char}\bb Q=0$, every transcendence basis of $\bb Q_p/\bb Q$
is separating. Since $\bb Q_p$ is uncountable, 
$\mathrm{trdeg}_{\bb Q}\bb Q_p\neq 0$. Therefore
$\dim_{\bb Q_p}\Omega_{\bb Q_p/\bb Q}\neq 0$
by \cite[Th.~26.5]{MatsumuraCRT}. Hence
\begin{equation}
\mathrm{Der}_{\bb Q}(\bb Q_p,\bb Q_p)
\cong 
\mathrm{Hom}_{\bb Q_p}(\Omega_{\bb Q_p/\bb Q},\bb Q_p)
\end{equation}
is nontrivial. It follows that there is a nontrivial derivation
$\bb Q_p\to \bb Q_p$. It is easy to see that its restriction to $\bb Z_p$
is a nontrivial derivation $\bb Z_p\to \bb Q_p$.
\end{proof}

\begin{definition}[Twisting map]
Let $G$ be an $R$-group. A function $\eta:G\to G$ is called an
$R$-twisting map if the following conditions hold for all $x,y\in G$ and
$r\in R$:
\begin{enumerate}
    \item[(C1)] $\tau(x^r)=\tau(x)^r$;
    \item[(C2)] $\tau(x^y)=\tau(x)^y$;
    \item[(C3)] if $[x,y]=1$, then $\tau(xy)=\tau(x)\tau(y)$;
    \item[(C4)] $\tau(\tau(x))=1$.
\end{enumerate}
\end{definition}

\begin{proposition}[$R$-group structure twist]
Let $R$ be a commutative ring, let $G$ be an $R$-group, let
$\delta:R\to R$ be a derivation, and let $\tau:G\to G$ be an
$R$-twisting map. Then the operation $G\times R\to G$,
$(x,r)\mapsto x^{(r)}$, defined by
\begin{equation}
x^{(r)}=x^r\tau(x)^{\delta(r)},    
\end{equation}
where $(-)^r$ denotes the original $R$-group operation, defines an 
$R$-group structure on $G$.
\end{proposition}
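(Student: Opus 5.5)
The plan is to check axioms (R1)--(R5) one at a time for the new operation $x^{(r)}=x^r\tau(x)^{\delta(r)}$. The preliminary observation is that $x$ and $\tau(x)$ commute. By (C2), $\tau(x)^x=\tau(x^x)=\tau(x)$. By \eqref{eq:commutativity_of_powers}, it follows that $x^r$ and $\tau(x)^s$ commute for all $r,s\in R$. Moreover, (C1) and (C4) give $\tau(\tau(x)^s)=\tau(\tau(x))^s=1$. Also, applying (C1) with $r=0$ gives $\tau(1)=1$.

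\textbf{(R1) and (R2).} For (R1), use additivity of $\delta$ and the commutation just noted:
\[
x^{(r+s)}=x^rx^s\tau(x)^{\delta(r)}\tau(x)^{\delta(s)}=x^{(r)}x^{(s)}.
\]
For (R2), the Leibniz rule gives $\delta(1)=2\delta(1)$, hence $\delta(1)=0$, and therefore $x^{(1)}=x$.

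\textbf{(R4) and (R5).} For (R4), the original (R4) together with (C2) gives $(x^y)^r=(x^r)^y$ and $\tau(x^y)^{\delta(r)}=(\tau(x)^{\delta(r)})^y$; multiplying these yields $(x^y)^{(r)}=(x^{(r)})^y$. For (R5), suppose $xy=yx$. By (C3), $\tau(xy)=\tau(x)\tau(y)$.
\begin{itemize}
\item We need $\tau(x)$ and $\tau(y)$ to commute. By (C2), $\tau(x)^y=\tau(x^y)=\tau(x)$, so $\tau(x)$ commutes with $y$; since $\tau(x)$ also commutes with $x$, we get $\tau(x)^{\tau(y)}=\tau(x^{\tau(y)})=\tau(x)$.
\item Hence the elements $x,y,\tau(x),\tau(y)$ pairwise commute. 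By Proposition~\ref{prop:commutative}, the $R$-subgroup they generate is abelian.
\item Inside this abelian subgroup, the original (R5) rearranges $(xy)^r\tau(xy)^{\delta(r)}$ into $x^r\tau(x)^{\delta(r)}\,y^r\tau(y)^{\delta(r)}$, which is $x^{(r)}y^{(r)}$.
\end{itemize}

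\textbf{(R3).} This is the main step. We first need $\tau(x^{(r)})$:
\[
\tau(x^{(r)})=\tau(x^r)\,\tau\bigl(\tau(x)^{\delta(r)}\bigr)=\tau(x)^r\cdot 1=\tau(x)^r.
\]
This uses (C3), which applies because $x^r$ and $\tau(x)^{\delta(r)}$ commute, followed by (C1) and (C4). Then
\[
(x^{(r)})^{(s)}=(x^{(r)})^s\,\tau(x)^{r\delta(s)}=x^{rs}\tau(x)^{\delta(r)s}\tau(x)^{r\delta(s)}.
\]
Here we expand $(x^r\tau(x)^{\delta(r)})^s$ using the original (R5) and (R3), again relying on the commutation of $x^r$ and $\tau(x)^{\delta(r)}$. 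Finally, (R1) and the Leibniz rule combine the exponents of $\tau(x)$ to $\delta(rs)$, giving $x^{(rs)}$.

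The crucial point in this step is (C4). It kills the second-order term $\tau(\tau(x)^{\delta(r)})$, which would otherwise produce a $\delta(\delta(r))$ contribution incompatible with Leibniz. The only other care needed is the bookkeeping of commutations, which is handled throughout by \eqref{eq:commutativity_of_powers} and Proposition~\ref{prop:commutative}.
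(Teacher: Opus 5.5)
Your proof is correct and is essentially the paper's argument: the paper obtains this statement from the relative version (Proposition~\ref{prop:rel-twist}) by the same axiom-by-axiom check of (R1)--(R5), and there hypothesis (C6) plays the role of your identity $\tau(\tau(x)^s)=1$, which you derive from (C1) and (C4). One small correction in (R5): the step $\tau(x^{\tau(y)})=\tau(x)$ needs $\tau(y)$ to commute with $x$, which follows from (C2) with $x$ and $y$ swapped, not from $\tau(x)$ commuting with $x$; alternatively, as the paper does, apply (C3) to both $xy$ and $yx=xy$ to get $\tau(x)\tau(y)=\tau(xy)=\tau(yx)=\tau(y)\tau(x)$.
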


\begin{proof}
This follows from the more general Proposition~\ref{prop:rel-twist}. 
\end{proof}

\begin{example}
Let $\bb K$ be a field of characteristic $0$, let $N$ be the free nilpotent
group of class $2$ on two generators $x,y$, and let $N^{\bb K}$ be its Hall
$\bb K$-completion. Its elements have the form
$x^a y^b [x,y]^c,$
where $a,b,c\in \bb K.$
Assume that
$
f:\bb K^2\to \bb K
$
is a map which is $\bb K$-linear on every $1$-dimensional vector subspace
of $\bb K^2$. We claim that the map
$\tau:N^{\bb K}\to N^{\bb K}$ defined by
\begin{equation}
\tau(x^ay^b[x,y]^c)=[x,y]^{f(a,b)}
\end{equation}
is a $\bb K$-twisting map. The only non-obvious axiom to check is (C3).
It follows from the fact that $x^ay^b[x,y]^c$ and
$x^{a'}y^{b'}[x,y]^{c'}$ commute if and only if $(a,b)$ and $(a',b')$
are linearly dependent in $\bb K^2$. As a corollary, for any derivation
$\delta:\bb K\to \bb K$, the formula
\begin{equation}
(x^ay^b [x,y]^c)^{(r)}
=
x^{ar} y^{br} [x,y]^{cr-\binom{r}{2}ab+f(a,b)\delta(r)}
\end{equation}
defines a $\bb K$-group structure on $N^{\bb K}$. 
\end{example}

\begin{definition}[Relative twisting map] 
Let $R\subseteq S$ be a commutative ring extension, let $H$ be an
$S$-group, and let $G\subseteq H$ be its $R$-subgroup. A function
$\tau:G\to G$ is called a relative $(R,S)$-twisting map if it is an
$R$-twisting map and the following properties are satisfied for all
$x\in G$ and $s\in S$:
\begin{enumerate}
    \item[(C5)] $\tau(x)^s\in G$;
    \item[(C6)] $\tau(\tau(x)^s)=1$. 
\end{enumerate}
\end{definition}

\begin{remark}
If $R=S$ and $G=H$, then a relative $(R,S)$-twisting map is just an
$R$-twisting map. 
\end{remark}

\begin{proposition}[Relative $R$-group structure twist]
\label{prop:rel-twist}
Let $R\subseteq S$ be an extension of commutative rings, let $H$ be an
$S$-group, and let $G\subseteq H$ be an $R$-subgroup. Let
$\delta:R\to S$ be a derivation, and let $\tau:G\to G$ be a relative
$(R,S)$-twisting map. Then the operation $G\times R\to G$,
$(x,r)\mapsto x^{(r)}$, defined by
\begin{equation}
x^{(r)}=x^r\tau(x)^{\delta(r)},    
\end{equation}
where $(-)^r$ denotes the original $R$-group operation, defines an 
$R$-group structure on $G$.
\end{proposition}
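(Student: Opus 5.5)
We verify the axioms (R1)--(R5) for the operation $x^{(r)}=x^r\tau(x)^{\delta(r)}$ directly. First we note that the right-hand side lies in $G$: $x^r\in G$ because $G$ is an $R$-subgroup, and $\tau(x)^{\delta(r)}\in G$ by (C5), since $\delta(r)\in S$. The key preliminary observation is that $x$ and $\tau(x)$ commute. Indeed, $x^{\tau(x)}$ and $x$ are related through (C2): $\tau(x^{\tau(x)})=\tau(x)^{\tau(x)}=\tau(x)$. This alone does not give commutation, so instead we use that $\tau$ is an $R$-twisting map. Condition (C1) with $r=-1$ gives $\tau(x^{-1})=\tau(x)^{-1}$. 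Applying (C2) to $y=x$ gives $\tau(x^x)=\tau(x)^x$, i.e.\ $\tau(x)=\tau(x)^x$, so $\tau(x)$ commutes with $x$. Then, by \eqref{eq:commutativity_of_powers} in the $S$-group $H$, every $x^r$ commutes with every $\tau(x)^s$. We also expect to use that $\tau(x)^s$ commutes with $\tau(x)^{s'}$, and that $\tau$ kills $\tau(x)^s$ by (C6).

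With these facts the axioms go as follows. For (R1), we use (R1) and (R5) in $H$ together with additivity of $\delta$:
\[
x^{(r+r')}=x^rx^{r'}\tau(x)^{\delta(r)}\tau(x)^{\delta(r')}=x^{(r)}x^{(r')},
\]
where we commute $x^{r'}$ past $\tau(x)^{\delta(r)}$. For (R2), we need $\delta(1)=0$, which follows from Leibniz: $\delta(1)=2\delta(1)$. For (R4), we use (R4) in $H$ and (C2): $\tau(x^y)^{\delta(r)}=(\tau(x)^y)^{\delta(r)}=(\tau(x)^{\delta(r)})^y$. For (R5), if $[x,y]=1$, then (C3) gives $\tau(xy)=\tau(x)\tau(y)$. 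Moreover, $\tau(x)$ commutes with $y$ and with $\tau(y)$. For this we apply (C2): $\tau(x)^y=\tau(x^y)=\tau(x)$, and then symmetrically, using the commutation facts in $H$ and Proposition~\ref{prop:commutative}, all of $x,y,\tau(x),\tau(y)$ generate an abelian $S$-subgroup of $H$. Then (R5) in $H$ splits everything.

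The main computation is (R3), $(x^{(r)})^{(r')}=x^{(rr')}$. Write $t=\tau(x)$ and $u=x^{(r)}=x^rt^{\delta(r)}$. By (C3), applicable since $x^r$ and $t^{\delta(r)}$ commute, together with (C1) and (C6), we get $\tau(u)=\tau(x^r)\tau(t^{\delta(r)})=t^r$. Hence
\[
u^{(r')}=u^{r'}\,(t^{r})^{\delta(r')}=x^{rr'}t^{\delta(r)r'}t^{r\delta(r')},
\]
using (R5) and (R3) in $H$ for the commuting pair $x^r,t^{\delta(r)}$. The Leibniz rule then gives exactly $x^{rr'}t^{\delta(rr')}$.

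We expect the main obstacle to be bookkeeping: several $S$-exponents act on elements of $G$ inside $H$, and we must check at each step that the relevant elements commute in $H$, so that (R5) of $H$ applies, and that intermediate elements stay in $G$, where $\tau$ is defined. This is where (C5), (C6) and the commutation of $x$ with $\tau(x)$ are used.
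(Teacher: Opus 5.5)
Your proposal is correct and follows the paper's proof essentially step for step. You get $[\tau(x),x]=1$ from (C2) with $y=x$, check (R1), (R2), (R4) directly, handle (R3) via $\tau(x^{(r)})=\tau(x)^r$ using (C3), (C1), (C6) and the Leibniz rule, and handle (R5) via (C2), (C3) and (R5) in $H$. The one spot to tighten is (R5): the commutation of $\tau(x)$ with $\tau(y)$ does not follow from commutation facts in $H$ alone. Justify it as the paper does, by $\tau(x)\tau(y)=\tau(xy)=\tau(yx)=\tau(y)\tau(x)$, or alternatively by (C2) with conjugator $\tau(x)$. Your explicit check via (C5) that $x^{(r)}\in G$ is a detail the paper leaves implicit.
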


\begin{proof}
First observe that condition (C2), applied with $y=x$, gives
$
\tau(x)=\tau(x^x)=\tau(x)^x.$
Hence
$[\tau(x),x]=1.$
Therefore the elements $x^s$ and $\tau(x)^{s'}$ commute for all
$s,s'\in S$.

\emph{(R1).} Let $r,r'\in R$ and $x\in G$. Since $\delta$ is additive, we
obtain
\begin{equation}
x^{(r+r')}
=x^{r+r'}\tau(x)^{\delta(r+r')}
=x^r\tau(x)^{\delta(r)}
x^{r'}\tau(x)^{\delta(r')}
=x^{(r)}x^{(r')}.
\end{equation}

\emph{(R2).} Since $\delta$ is a derivation, $\delta(1)=0$. Therefore
$x^{(1)}=x^1\tau(x)^{\delta(1)}=x.$

\emph{(R3).} Since $x^r$ commutes with $\tau(x)^{\delta(r)}$, (R5) gives
\begin{equation}
\bigl(x^r\tau(x)^{\delta(r)}\bigr)^{r'}
=
x^{rr'}\tau(x)^{\delta(r)r'}.
\end{equation}
Since $x^r$ and $\tau(x)^{\delta(r)}$ commute, and both belong to $G$,
condition (C3) gives
\begin{equation}
\tau\bigl(x^r\tau(x)^{\delta(r)}\bigr)
=
\tau(x^r)\tau\bigl(\tau(x)^{\delta(r)}\bigr).
\end{equation}
By (C1),
$\tau(x^r)=\tau(x)^r,$
and by (C6),
$\tau\bigl(\tau(x)^{\delta(r)}\bigr)=1.$
Therefore
\begin{equation}
\tau\bigl(x^r\tau(x)^{\delta(r)}\bigr)=\tau(x)^r.
\end{equation}
Hence
\begin{equation}
\begin{aligned}
\bigl(x^{(r)}\bigr)^{(r')}
&=
\bigl(x^r\tau(x)^{\delta(r)}\bigr)^{r'}
\tau\bigl(x^r\tau(x)^{\delta(r)}\bigr)^{\delta(r')} \\
&=
x^{rr'}\tau(x)^{\delta(r)r'+r\delta(r')}
=
x^{rr'}\tau(x)^{\delta(rr')}
=
x^{(rr')}.
\end{aligned}
\end{equation}

\emph{(R4).} We prove the conjugation axiom. Let $x,y\in G$ and $r\in R$.
Using the original conjugation axiom in $H$ and condition (C2), we get
\begin{equation}
(x^y)^{(r)}
=
(x^y)^r\tau(x^y)^{\delta(r)}
=
(x^r)^y\bigl(\tau(x)^{\delta(r)}\bigr)^y
=
\bigl(x^r\tau(x)^{\delta(r)}\bigr)^y
=
(x^{(r)})^y.
\end{equation}

\emph{(R5).} Finally, suppose that $[x,y]=1$. Since $x$ and $y$ commute,
condition (C2) gives
$\tau(x)=\tau(x^y)=\tau(x)^y,$
so $\tau(x)$ commutes with $y$. Similarly, $\tau(y)$ commutes with $x$.
Moreover, since $xy=yx$, we have 
$
\tau(x)\tau(y)=\tau(xy)=\tau(yx)=\tau(y)\tau(x),
$
so $\tau(x)$ and $\tau(y)$ commute. Therefore all elements occurring below
commute in the required way, and (R5) in the $S$-group $H$ gives
\begin{equation}
\begin{aligned}
(xy)^{(r)}
&=
(xy)^r\tau(xy)^{\delta(r)}
=
x^r y^r\bigl(\tau(x)\tau(y)\bigr)^{\delta(r)} \\
&=
x^r y^r\tau(x)^{\delta(r)}\tau(y)^{\delta(r)}
=
x^r\tau(x)^{\delta(r)}
y^r\tau(y)^{\delta(r)}
=
x^{(r)}y^{(r)}.
\end{aligned}
\end{equation}
Thus the operation $(x,r)\mapsto x^{(r)}$ satisfies the axioms
(R1)--(R5), and hence defines an $R$-group structure on $G$.
\end{proof}

\begin{notation}[The group $\mathcal{G}$]
Denote by $N$ the free nilpotent group of class $2$ generated by $x,y$.
Every element of $N$ is uniquely written as
\begin{equation}
x^ay^b[x,y]^c,
\qquad a,b,c\in \bb Z.
\end{equation}
Consider the direct product $\langle w\rangle\times N$ with an infinite
cyclic group $\langle w\rangle$, and define an endomorphism
$\alpha:\langle w\rangle\times N\to \langle w\rangle\times N$
by
\begin{equation}
w\mapsto w[x,y],
\hspace{5mm}
x\mapsto xy^p,
\hspace{5mm}
y\mapsto y.
\end{equation}
Note that $\alpha([x,y])=[x,y]$. One checks that this endomorphism is an
automorphism and that $\alpha^n$ is given by
\begin{equation}
w\mapsto w[x,y]^n,
\hspace{5mm}
x\mapsto xy^{pn},
\hspace{5mm}
y\mapsto y
\end{equation}
for every $n\in\bb Z$. Therefore $\alpha$ defines an action of the cyclic
group $\langle v\rangle$ on $\langle w\rangle\times N$. We define
$\mathcal{G}$ as the semidirect product
\begin{equation}
\mathcal{G}=\langle v\rangle\ltimes(\langle w\rangle\times N)
\end{equation}
such that
$v^{-1}hv=\alpha(h)$
for every $h\in\langle w\rangle\times N$. Every element of $\mathcal{G}$
can be written uniquely as
\begin{equation}
v^aw^bx^cy^d[x,y]^e,
\qquad a,b,c,d,e\in \bb Z.
\end{equation}
\end{notation}

\begin{lemma}
The group $\mathcal{G}$ is a torsion-free nilpotent group of class $3$, and
$v,w,x,y,[x,y]$ is a Malcev basis. The product in $\mathcal{G}$ is given by
\begin{equation}
\label{eq:G-product}
\begin{aligned}
&\left(v^a w^b x^c y^d [x,y]^e\right)
\left(v^{a'} w^{b'} x^{c'} y^{d'} [x,y]^{e'}\right) \\
&\qquad =
v^{a+a'}
w^{b+b'}
x^{c+c'}
y^{d+d'+pa'c}
[x,y]^{
e+e'
+a'b
-pa'\binom{c}{2}
-c'(d+pa'c)
}.
\end{aligned}
\end{equation}
The $n$-th power in $\mathcal{G}$ is given by
\begin{equation}
\label{eq:G-power}
\begin{aligned}
\left(v^a w^b x^c y^d [x,y]^e\right)^n
&=
v^{na}
w^{nb}
x^{nc}
y^{nd+pac\binom{n}{2}}  \\
&\quad\cdot
[x,y]^{
ne
+(ab-cd)\binom{n}{2}
-pa\left(\binom{c}{2}+c^2\right)\binom{n}{2}
-2pac^2\binom{n}{3}
}.
\end{aligned}
\end{equation}
\end{lemma}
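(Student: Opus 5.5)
The plan is to work entirely inside the semidirect product and reduce everything to computing the automorphism $\alpha^a$ on normal forms. First, I will record the basic commutation rules. In $N$ we have $[x,y]$ central and $y^d x^{c'} = x^{c'} y^d [x,y]^{-c'd}$, since $[y,x]=[x,y]^{-1}$ and the cross term is bilinear. In $\langle w\rangle\times N$, $w$ is central. For $v$ we use $h v^{a'} = v^{a'}\alpha^{a'}(h)$, where $\alpha^{a'}$ is the formula stated in the notation; this formula is checked by induction on $n$ using $\alpha([x,y])=[x,y]$, and negative $n$ follow because $\alpha^n\alpha^{-n}=\mathrm{id}$. Next, I will compute $\alpha^{a'}(w^b x^c y^d [x,y]^e) = w^b [x,y]^{a'b} (xy^{pa'})^c y^d [x,y]^e$. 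Expanding $(xy^{m})^c = x^c y^{mc}[x,y]^{-m\binom{c}{2}}$ with $m=pa'$ gives exactly the exponent $y^{d+pa'c}$ and the correction $a'b - pa'\binom{c}{2}$. Finally, I will move $x^{c'}$ past $y^{d+pa'c}$, which produces $-c'(d+pa'c)$. This yields \eqref{eq:G-product}.

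From the product formula, the normal form $v^aw^bx^cy^d[x,y]^e$ is unique and covers every element; this follows from the semidirect product decomposition together with uniqueness of normal forms in $N$. Write $G_1\supset\cdots\supset G_6=1$ for the subgroups generated by the tails of the sequence $v,w,x,y,[x,y]$. Each $G_i$ is a subgroup, because \eqref{eq:G-product} restricted to elements with leading exponents zero stays in that form. Each $G_i/G_{i+1}\cong\mathbb{Z}$. The series is central, which I will check from the product formula: modulo $G_{i+1}$, the commutator of a generator $u_i$ with an arbitrary element has trivial $i$-th and earlier coordinates. Concretely, $[v,x]=y^p$, $[v,w]=[x,y]$, $[x,y]$ is central, and $w$ commutes with $N$. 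Hence $v,w,x,y,[x,y]$ is a Mal'cev basis and $\mathcal G$ is torsion-free nilpotent. For the class, I will bound it above by $3$ using the series $\mathcal G\supset\langle w,x,y,[x,y]\rangle$-type refinement with $\gamma_2\subseteq\langle y,[x,y]\rangle\cdot\langle [x,y]\rangle$, and $\gamma_3\subseteq\langle [x,y]\rangle$ central. For the lower bound, the element $[[v,x],x]=[y^p,x]=[x,y]^{-p}\neq 1$ shows $\gamma_3\neq 1$.

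For the power formula \eqref{eq:G-power}, I will argue by induction on $n\ge 0$, multiplying the $n$-th power formula by $g=v^aw^bx^cy^d[x,y]^e$ on the right with \eqref{eq:G-product}. The $v,w,x$ exponents are immediate. The $y$ exponent becomes $nd+pac\binom n2 + d + pac\,n$, which matches via $\binom{n}{2}+n=\binom{n+1}{2}$. The main obstacle is the $[x,y]$ exponent. The increment is
\[
e + ab\,n - pa\binom{nc}{2} - c\Bigl(nd+pac\tbinom n2 + pa\,nc\Bigr),
\]
and I must show it equals the difference of the claimed expression at $n+1$ and $n$. That difference is
\[
e+(ab-cd)n - pa\Bigl(\tbinom c2+c^2\Bigr)n - 2pac^2\tbinom n2 .
\]
Using $\binom{nc}{2}=n\binom c2 + c^2\binom n2$, the increment becomes $e+abn-cdn-pan\binom c2 - pac^2\binom n2 - pac^2\binom n2 - pac^2 n$, which agrees with the difference. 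Negative $n$ then follow from $g^{-n}=(g^n)^{-1}$, or more simply from the fact that both sides are integer-valued polynomials in $n$ agreeing for all $n\ge0$ and satisfying the same recursion $g^{n+1}=g^n g$, which is valid for all integers $n$.
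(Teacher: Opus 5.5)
Your proof is correct and is essentially the paper's argument: the paper simply says the formulas come from explicit computation and that $\mathcal G\supset\langle y,[x,y]\rangle\supset\langle[x,y]\rangle\supset 1$ is a central series. Your derivation of the product formula via $\alpha^{a'}$ and your induction for the power formula, including the identity $\binom{nc}{2}=n\binom{c}{2}+c^2\binom{n}{2}$ and the two-sided recursion for negative $n$, both check out, and your lower bound $\gamma_3\neq 1$ usefully makes explicit that the class is exactly $3$. There are only cosmetic sign slips: with the paper's convention $[x,y]=x^{-1}y^{-1}xy$ one gets $[v,x]=y^{-p}$ and $[v,w]=[x,y]^{-1}$, hence $[[v,x],x]=[x,y]^{p}$, which changes nothing in the argument.
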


\begin{proof}
The formulas are obtained by an explicit computation. Using these formulas,
it is easy to see that $v,w,x,y,[x,y]$ is a Malcev basis. Moreover, it is
easy to see that
$\mathcal{G} \supset  \langle y,[x,y] \rangle \supset
\langle [x,y] \rangle \supset 1$
is a central series. Therefore $\mathcal G$ is nilpotent of class $3$.
\end{proof}

\begin{notation}[The group $\mathcal{G}'$]
Let $H_{\bb Q_p}(\mathcal G)$ denote the Hall $\bb Q_p$-completion of
$\mathcal G$ (Definition~\ref{def:Hall completion}). Every element of
$H_{\bb Q_p}(\mathcal G)$ has a unique expression of the form
\begin{equation}
v^a w^b x^c y^d [x,y]^e,
\qquad
a,b,c,d,e\in \bb Q_p,
\end{equation}
where the product is given by formula~\eqref{eq:G-product}, and the
$\bb Q_p$-group structure is given by formula~\eqref{eq:G-power}. We define
\begin{equation}
\mathcal{G}'\subseteq H_{\bb Q_p}(\mathcal{G})
\end{equation}
to be the subgroup consisting of all elements of the form
\begin{equation}
v^a w^b x^c y^d [x,y]^e,
\qquad
a,c,d\in \bb Z_p,\quad b,e\in \bb Q_p.
\end{equation}
The product and power formulas show that $\mathcal{G}'$ is a
$\bb Z_p$-subgroup of $H_{\bb Q_p}(\mathcal G)$ with respect to the
restricted $\bb Z_p$-powers. Since $\mathcal G$ is nilpotent of class $3$, its Hall $\bb Q_p$-completion
$H_{\bb Q_p}(\mathcal G)$ is also nilpotent of class $3$
(Lemma~\ref{lemma:linfty-hall-completion},
Proposition~\ref{prop:F^{(n-1)}(G) is nilpotent}). Hence $\mathcal G'$ is
nilpotent of class $3$. 
\end{notation}

\begin{notation}[The relative twisting map $\tau:\mathcal{G}'\to \mathcal{G'}$]
Consider two maps
$f,g:\bb Z_p^2\to \bb Z_p$
defined as follows:
\begin{equation}
f(c,d)=
\begin{cases}
c, & c\neq 0,\ d/c\in 1+p\bb Z_p,\\
0, & \text{otherwise},
\end{cases}
\hspace{3mm}
g(c,d)=
\begin{cases}
(d-c)/p, & c\neq 0,\ d/c\in 1+p\bb Z_p,\\
0, & \text{otherwise}.
\end{cases}
\end{equation}
Define a map
\begin{equation}
\tau:\mathcal{G}'\to \mathcal{G}'
\end{equation}
by the formula
\begin{equation}
\label{eq:twist_example}
\tau(v^a w^b x^c y^d [x,y]^e)
=
\begin{cases}
w^{f(c,d)} [x,y]^{g(c,d)}, & a=0,\\
1, & a\neq 0.
\end{cases}
\end{equation}
\end{notation}

\begin{lemma}
\label{lemma:twisting}
The map $\tau:\mathcal{G}'\to \mathcal{G}'$ is a relative
$(\bb Z_p,\bb Q_p)$-twisting map.
\end{lemma}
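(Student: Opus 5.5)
The plan is to check (C1)--(C6) directly from the product and power formulas \eqref{eq:G-product} and \eqref{eq:G-power}, which remain valid in $H_{\bb Q_p}(\mathcal G)$ with exponents in $\bb Q_p$. Two observations organize everything.

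First, the image of $\tau$ lies in the abelian subgroup $T=\{w^b[x,y]^e\}$. Since $w$ commutes with $N$ and $[x,y]$ is central, $T$ is centralized by $w,x,y$. Conjugation by $v^{a'}$ acts on $T$ by $w^b[x,y]^e\mapsto w^b[x,y]^{e+a'b}$. By \eqref{eq:G-power} with $a=c=d=0$, we get $(w^b[x,y]^e)^s=w^{sb}[x,y]^{se}$ for all $s\in\bb Q_p$.

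Second, the map $v^aw^bx^cy^d[x,y]^e\mapsto a$ is a homomorphism to $\bb Z_p$ compatible with powers. Moreover, on elements with $a=0$, conjugation by $v^{a'}$ sends $(c,d)\mapsto(c,d+pa'c)$, while conjugation by $w,x,y$ leaves $(a,c,d)$ unchanged.

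With these in hand, the easy axioms go as follows.

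(C5), (C4), (C6): For any $s\in\bb Q_p$, $\tau(g)^s=w^{sf}[x,y]^{sg}$ has $a=c=d=0$. So it lies in $\mathcal G'$ (because $b,e$ are allowed in $\bb Q_p$), and $\tau$ of it is $1$ because $f(0,0)=g(0,0)=0$.

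(C1): If $a\neq0$ and $r\neq0$, both sides are $1$. If $r=0$, both sides are trivial. If $a=0$, the power formula gives $(c,d)\mapsto(rc,rd)$. For $r\neq0$ the ratio $d/c$ is unchanged, so $f(rc,rd)=rf(c,d)$ and $g(rc,rd)=rg(c,d)$, which matches $\tau(h)^r=w^{rf}[x,y]^{rg}$.

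(C2): Since $a$ is conjugation invariant, the case $a\neq0$ is trivial. For $a=0$ it suffices to conjugate by the generators $v^{a'}$, $w^{b'}$, $x^{c'}$, $y^{d'}$, $[x,y]^{e'}$ and use multiplicativity of conjugation.
\begin{itemize}
\item Conjugation by $w,x,y,[x,y]$ fixes $(c,d)$ and centralizes $T$, so both sides agree.
\item Conjugation by $v^{a'}$ replaces $d/c$ by $d/c+pa'$. This preserves membership in $1+p\bb Z_p$, keeps $f=c$, and shifts $g$ by $a'c$. On the other side, $(w^{f}[x,y]^{g})^{v^{a'}}=w^{f}[x,y]^{g+a'f}$, and $a'f=a'c$ in the nontrivial case. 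In the trivial case both sides are $1$.
\end{itemize}

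The main obstacle is (C3), which requires a careful case analysis of commuting pairs $h=v^aw^bx^cy^d[x,y]^e$ and $h'$ with exponents $a',c',d'$. I would compute $hh'$ and $h'h$ from \eqref{eq:G-product} and compare the $y$- and $[x,y]$-exponents.

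Case $a=a'=0$. The $y$-exponents agree automatically. The $[x,y]$-exponents differ by $cd'-c'd$, so commuting forces $(c,d)$ and $(c',d')$ to be linearly dependent. Then either one of the vectors is $0$, or both lie in the region $\{c\neq0,\ d/c\in1+p\bb Z_p\}$, or both lie outside it. In the second situation $f$ and $g$ are additive: they are given by the linear formulas $c$ and $(d-c)/p$, including when $c+c'=0$, where the sum is $(0,0)$. In the other situations both sides vanish. Since $T$ is abelian, $\tau(hh')=\tau(h)\tau(h')$ follows.

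Case $a=0\neq a'$. Equality of the $y$-exponents gives $pa'c=0$, so $c=0$ and $\tau(h)=1$. Also $\tau(hh')=1=\tau(h')$, since $hh'$ and $h'$ have nonzero $v$-exponent.

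Case $a,a'\neq0$. Then $\tau(h)=\tau(h')=1$. If $a+a'\neq0$ we are done. If $a+a'=0$, note that $hh'$ has $v$-exponent $0$ and commutes with $h$, so the previous case (applied to the pair $hh'$, $h$) shows that its $c$-exponent is $0$, and hence $\tau(hh')=1$.

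I expect the bookkeeping of the $[x,y]$-exponent in the first case to be the only delicate computation.
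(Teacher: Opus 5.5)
Your proposal is correct and follows the paper's proof essentially step for step. The paper also uses homogeneity of $f,g$ for (C1); the shear $(c,d)\mapsto(c,d+pa'c)$ with the shift $g\mapsto g+a'f$ for (C2); the same three-case analysis of commuting pairs via $cd'-c'd=0$ and $pa'c=0$ for (C3); and the fact that $\tau$ lands in $\langle w,[x,y]\rangle_{\bb Q_p}$ with $f(0,0)=g(0,0)=0$ for (C4)--(C6). One small slip: in the case $a=a'=0$ with exactly one of $(c,d),(c',d')$ equal to $0$, the two sides of (C3) need not vanish, but they agree trivially because $f(0,0)=g(0,0)=0$.
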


\begin{proof}
First note that the functions $f$ and $g$ satisfy the following two
properties. For every $r\in\bb Z_p$,
\begin{equation}
f(rc,rd)=rf(c,d),
\qquad
g(rc,rd)=rg(c,d).
\end{equation}
Moreover, they are additive on every one-dimensional $\bb Q_p$-subspace
of $\bb Q_p^2$ after intersection with $\bb Z_p^2$. Indeed, on a line
$c=0$ both functions are zero. On a line with fixed slope $d/c$, either the
slope is not in $1+p\bb Z_p$, in which case both functions are zero, or
the slope is $1+p\lambda$ with $\lambda\in\bb Z_p$, in which case
\begin{equation}
f(c,d)=c,
\qquad
g(c,d)=\frac{d-c}{p}=\lambda c,
\end{equation}
which are additive in $c$. We shall also use the shift identities
\begin{equation}
f(c,d+prc)=f(c,d),
\qquad
g(c,d+prc)=g(c,d)+rf(c,d)
\end{equation}
for every $r\in\bb Z_p$.

\emph{Condition (C1).}
Let
$A=v^a w^b x^c y^d[x,y]^e\in \mathcal G',$
$r\in\bb Z_p.$
If $a\neq0$, then $\tau(A^r)=\tau(A)^r=1$. If $a=0$, then the homogeneity
of $f$ and $g$ gives
\begin{equation}
\tau(A^r)
=
w^{f(rc,rd)}[x,y]^{g(rc,rd)}
=
w^{rf(c,d)}[x,y]^{rg(c,d)}
=
\tau(A)^r.
\end{equation}
Thus (C1) holds.

\emph{Condition (C2).}
Conjugation preserves the $v$-coordinate. Hence if $a\neq0$, then
$\tau(A^B)=\tau(A)^B=1$. Assume now that $a=0$. Conjugation by elements of
the base subgroup $\langle w,x,y,[x,y]\rangle$ changes only the central
$[x,y]$-coordinate of $A$, and therefore does not change the pair $(c,d)$.
Since
$\tau(A)=w^{f(c,d)}[x,y]^{g(c,d)}$
lies in the abelian subgroup $\langle w,[x,y]\rangle_{\bb Q_p}$,
conjugation by the base subgroup does not affect the required equality.

It remains to check conjugation by $v^r$. Using the formula above,
$A^{v^r}$ has $(x,y)$-coordinates
$(c,d+prc).$
Therefore
$\tau(A^{v^r})
=
w^{f(c,d+prc)}[x,y]^{g(c,d+prc)}.$
By the shift identities,
\begin{equation}
f(c,d+prc)=f(c,d),
\qquad
g(c,d+prc)=g(c,d)+rf(c,d).
\end{equation}
Hence
\begin{equation}
\tau(A^{v^r})
=
w^{f(c,d)}[x,y]^{g(c,d)+rf(c,d)}
=
\tau(A)^{v^r}.
\end{equation}
Thus (C2) holds.

\emph{Condition (C3).}
Let
$A=v^a w^b x^c y^d[x,y]^e$
and
$B=v^{a'}w^{b'}x^{c'}y^{d'}[x,y]^{e'}$
be commuting elements of $\mathcal G'$.

If $a=a'=0$, then $A$ and $B$ lie in the base subgroup. Their commutativity
implies
$cd'-c'd=0.$
Thus $(c,d)$ and $(c',d')$ lie on one $\bb Q_p$-line. By the line-additivity
of $f$ and $g$, we obtain
$\tau(AB)=\tau(A)\tau(B).$

Suppose exactly one of $a,a'$ is nonzero. Without loss of generality,
assume $a'\neq0$ and $a=0$. Since $A$ commutes with $B$, the pair $(c,d)$
must be fixed by the nontrivial shear
$(c,d)\mapsto(c,d+pa'c).$
Since $\bb Z_p$ is an integral domain, this gives
$
c=0.
$
Hence
$
f(c,d)=g(c,d)=0.
$
Thus $\tau(A)=1$, while $\tau(B)=1$ and $\tau(AB)=1$ because $B$ and $AB$
have nonzero $v$-coordinate. Hence (C3) holds in this case.

Finally suppose $a\neq0$ and $a'\neq0$. If $a+a'\neq0$, then
$\tau(A)=\tau(B)=\tau(AB)=1.$
If $a+a'=0$, then $AB$ has zero $v$-coordinate. But $AB$ commutes with $A$,
which has nonzero $v$-coordinate. By the previous paragraph, the
$(x,y)$-coordinates of $AB$ have first coordinate equal to $0$. Therefore
$\tau(AB)=1.$
Again (C3) holds.

\emph{Conditions (C4), (C5), and (C6).}
For every $A\in\mathcal G'$, the element $\tau(A)$ lies in
$\langle w,[x,y]\rangle_{\bb Q_p}.$
Thus, for every $s\in\bb Q_p$,
$\tau(A)^s\in \mathcal G',$
because the $w$- and $[x,y]$-coordinates of $\mathcal G'$ are allowed to
lie in $\bb Q_p$. This proves (C5).

Moreover, every element of $\langle w,[x,y]\rangle_{\bb Q_p}$ has
$v$-, $x$-, and $y$-coordinates equal to zero. Hence its pair $(c,d)$ is
$(0,0)$, and so
$f(0,0)=g(0,0)=0.$
Therefore
$\tau(\tau(A)^s)=1$
for every $s\in\bb Q_p$. This proves (C6), and taking $s=1$ gives (C4).
\end{proof}

\begin{theorem}
\label{th:example:non-ideal}
There exists a $\bb Z_p$-group structure on $\mathcal{G}'$ such that the
center $Z(\mathcal{G}')$ is not a $\bb Z_p$-ideal.
\end{theorem}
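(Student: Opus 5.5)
The plan is to twist the $\bb Z_p$-group structure of $\mathcal G'$ with Proposition~\ref{prop:rel-twist}. Take $R=\bb Z_p\subseteq S=\bb Q_p$, $H=H_{\bb Q_p}(\mathcal G)$ and $G=\mathcal G'$. Let $\delta:\bb Z_p\to\bb Q_p$ be a nontrivial derivation, which exists by Lemma~\ref{lemma:non-trivial_derivation}, and let $\tau$ be the relative twisting map of Lemma~\ref{lemma:twisting}. Proposition~\ref{prop:rel-twist} then gives a $\bb Z_p$-group structure
\[
A^{(r)}=A^r\,\tau(A)^{\delta(r)}
\]
on $\mathcal G'$. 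The center $Z(\mathcal G')$ depends only on the underlying group, so I will first compute it. After that I will exhibit $g,h$ with $[g,h]\in Z(\mathcal G')$ for which $h^{-(r)}g^{-(r)}(gh)^{(r)}\notin Z(\mathcal G')$.

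\textbf{Step 1: compute the center.} Using \eqref{eq:G-product}, I expect $Z(\mathcal G')=\{[x,y]^e: e\in\bb Q_p\}$. Let $A=v^aw^bx^cy^d[x,y]^e$.
\begin{itemize}
\item Commuting with $x$ forces $a=0$ (conjugation by $v^a$ sends $x\mapsto xy^{pa}$) and $d=0$.
\item Commuting with $y$ forces $c=0$.
\item Commuting with $v$ forces $b=0$, since $v$ sends $w\mapsto w[x,y]$.
\end{itemize}
Conversely, $[x,y]^e$ is central in $H_{\bb Q_p}(\mathcal G)$. Every such element has $(c,d)=(0,0)$, so $\tau$ is trivial on the center.

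\textbf{Step 2: the witness.} Take $g=x$ and $h=y$; then $[g,h]=[x,y]^{\pm1}$ is central. The values of $\tau$ are:
\begin{itemize}
\item $x$ has $(c,d)=(1,0)$ and $0\notin 1+p\bb Z_p$, so $\tau(x)=1$;
\item $y$ has $c=0$, so $\tau(y)=1$;
\item $xy$ has $(c,d)=(1,1)$ with slope $1\in 1+p\bb Z_p$, so $\tau(xy)=w^{1}[x,y]^{0}=w$.
\end{itemize}
Therefore
\[
y^{-(r)}x^{-(r)}(xy)^{(r)}=y^{-r}x^{-r}(xy)^r\,w^{\delta(r)}.
\]
For the original structure, the factor $y^{-r}x^{-r}(xy)^r$ is a power of $[x,y]$. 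This is the Hall--Petresco identity in class $2$, or a direct check with \eqref{eq:G-power}, and it lies in $Z(\mathcal G')$. Choose $r\in\bb Z_p$ with $\delta(r)\neq 0$. Then the whole expression is a central element times $w^{\delta(r)}$. Since $v^{-1}w^{\delta(r)}v=w^{\delta(r)}[x,y]^{\delta(r)}\neq w^{\delta(r)}$, the element $w^{\delta(r)}$ is not central. So the $\bb Z_p$-ideal condition fails for $Z(\mathcal G')$.

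\textbf{Main obstacle.} The delicate step is Step 1, determining the center exactly. The argument needs $Z(\mathcal G')$ to contain no $w$-component, and this relies on conjugation by $v$ acting by $w\mapsto w[x,y]$. The remaining verifications, that $w^{\delta(r)}$ lies in $\mathcal G'$ (by (C5)) and that the untwisted defect is central, are routine from \eqref{eq:G-product} and \eqref{eq:G-power}.
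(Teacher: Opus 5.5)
Your proposal is correct and follows the paper's proof essentially step for step. You use the same twist $A^{(r)}=A^r\tau(A)^{\delta(r)}$ from Proposition~\ref{prop:rel-twist}, the same witnesses $x,y$ with $\tau(x)=\tau(y)=1$ and $\tau(xy)=w$, and reach the same conclusion $y^{(-r)}x^{(-r)}(xy)^{(r)}=[x,y]^{-\binom{r}{2}}w^{\delta(r)}\notin Z(\mathcal{G}')$; your explicit check that $Z(\mathcal{G}')=\{[x,y]^e\}$, by commuting with $x$, $y$ and $v$, supplies the ``direct computation'' that the paper leaves to the reader.
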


\begin{proof}
A direct computation shows that
\begin{equation}
Z(\mathcal{G}')
=
\left\{[x,y]^e \mid e\in \bb Q_p\right\}.
\end{equation}
We define the $\bb Z_p$-group structure on $\mathcal{G}'$ using the relative
$(\bb Z_p,\bb Q_p)$-twisting map from Lemma~\ref{lemma:twisting} by the
formula
$A^{(r)}
=
A^r\tau(A)^{\delta(r)},$
where $\delta:\bb Z_p\to \bb Q_p$ is a nontrivial derivation
(Lemma~\ref{lemma:non-trivial_derivation}) and $A^r$ denotes the ordinary
$\bb Z_p$-group structure on $\mathcal{G}'$ induced from the Hall
$\bb Q_p$-completion $H_{\bb Q_p}(\mathcal G)$. Choose $r\in\bb Z_p$ such
that
$\delta(r)\neq 0.$
Since $\tau(x)=\tau(y)=1$ and $\tau(xy)=w$, we have
\begin{equation}
x^{(r)}=x^r,
\qquad
y^{(r)}=y^r,
\qquad
(xy)^{(r)}=x^r y^r [x,y]^{-\binom{r}{2}}w^{\delta(r)}.
\end{equation}
Then
$
[x,y]\in Z(\mathcal{G}'),
$
but
$
y^{(-r)}x^{(-r)}(xy)^{(r)}
=
[x,y]^{-\binom{r}{2}}w^{\delta(r)}$ is not in
$Z(\mathcal{G}').$
Therefore $Z(\mathcal{G}')$ is not a $\bb Z_p$-ideal with respect to the
twisted $\bb Z_p$-group structure.
\end{proof}

\printbibliography
\end{document}